\newtheorem{thm}{Theorem}[section]   
\newtheorem{cor}[thm]{Corollary}
\newtheorem{lemma}[thm]{Lemma}
\newtheorem{prop}[thm]{Proposition}
\newtheorem{example}[thm]{Example}
\newtheorem{defn}[thm]{Definition}
\newtheorem{rem}[thm]{Remark}
\newcommand{\Fqm}{\mathbb{F}_{q^m}}
\newcommand{\xx}{\mathbf{x}}
\def\min{\operatorname{min}}
\def\c1{\operatorname{c_1}}
\def\c2{\operatorname{c_2}}
\def\R{{\mathcal R}}
\def\E{{\mathcal E}}
\def\K{{\mathcal K}}
\def\M{{\mathcal M}}
\def\+{\oplus}                   
\def\*{\otimes}                  
\def\Fq{\mathbb{F}_{q}}
\def\Fq{\mathbb{F}_q}
\newcommand{\<}{\langle}
\renewcommand{\>}{\rangle}
\newcommand{\0}{\mathbf{0}}
\begin{document}
\title[The Euler characteristic, $q$-matroids, and a M{\"o}bius function]{The Euler characteristic, $q$-matroids, and a M{\"o}bius function}

\author[Johnsen]{Trygve Johnsen}
\address{Department of Mathematics and Statistics, 
 UiT-The Arctic University of Norway  
N-9037 Troms{\o}, Norway}
\email{trygve.johnsen@uit.no}

\author[Pratihar]{Rakhi Pratihar}

\address{Department of Mathematics and Statistics, 
 UiT-The Arctic University of Norway  
N-9037 Troms{\o}, Norway}
\email{pratihar.rakhi@gmail.com}

\author[Randrianarisoa]{Tovohery H. Randrianarisoa}
\address{Department of Mathematics,  
Florida Atlantic University,  
Boca Ratan, Florida, USA}
\email{tovo@aims.ac.za}
\thanks{ All authors have been partially supported by grant 280731 from the Research Council of Norway,  and  by the project ``Pure Mathematics in Norway" through the Trond Mohn Foundation and Troms{\o} Research Foundation.}

\subjclass{05E45, 94B05, 05B35, 13F55}
\date{\today}

\begin{abstract}We first give two new proofs of an old result that the reduced Euler characteristic of a matroid complex is equal to the M{\"o}bius number of the lattice of cycles of the matroid up to the sign. The purpose has been to find a model to establish an analogous result for the case of $q$-matroids and we find a relation between the Euler characteristic of the simplicial chain complex associated to a $q$-matroid complex and the lattice of $q$-cycles of the $q$-matroid. We use this formula to find the complete homology over $\mathbb{Z}$ of this shellable simplicial complex. We give a characterization of nonzero Euler characteristic for such order complexes. Finally, based on these results we remark why singular homology of a $q$-matroid equipped with order topology may not be effective to describe the $q$-cycles unlike the classical case of matroids.
\end{abstract}

\maketitle

\textbf{Keywords :} Euler characteristic; M{\"o}bius function; $q$-matroids; $q$-compl-exes; finite topological spaces; shellability; weak homotopy equivalence; virtual Betti numbers.
\section{Introduction}
The Euler characteristic, a topological invariant, is a useful notion in algebraic topology and polyhedral combinatorics. For a simplicial complex $\Delta$ of dimension $k-1$, its (reduced) Euler characteristic is defined as $\chi(\Delta) = -f_0 +f_1 + \cdots + (-1)^{k-1} f_k$, where $f_i$ is the number of faces of $\Delta$ of cardinality $i$. Alternatively, for a simplicial complex, the (reduced) Euler characteristic is also defined as an alternating sum of the rank of its (reduced) singular homology groups, which are the same as the corresponding  simplicial homology groups and these ranks are called (reduced) Betti numbers of the simplicial complex. These two definitions of Euler characteristic indeed coincide, see, e.g., \cite[Theorem 1.7, Chapter 12]{GSSV}. Now for a special class of simplicial complexes, i.e., the shellable simplicial complexes, it is well known that only the top homology group is non-trivial with rank, say, $h_{k-1}$ (cf. \cite[Proposition 7.7.2]{B}) and thus $|\chi(\Delta)| = h_{k-1}$. Furthermore, for a matroid of rank $r$, the corresponding matroid complex $S_M$ (see Definition \ref{Defn. 2.17}), which is a shellable simplical complex of dimension $r-1$, has the Euler characteristic 
\begin{equation} \label{foundation}
\chi(S_M)=(-1)^{r-1}|\overline{\mu}(M^*)|.
\end {equation}
where $M^*$ is the dual matroid of $M$ and $$\overline{\mu}(M^*)= \begin{cases}|\mu(0,1)| &\text{ if } M^* \text{ has no loop},\\ 0 & \text{ otherwise}.
 \end{cases}
 $$
 with $\mu$ being the Möbius function on the lattice of 
flats of $M^*$ (see e.g. \cite[Proposition 7.4.7]{B}). 
The relation in Eq. (\ref{foundation}) can be seen as a link between the poset (lattice) of independent sets of $M$, and the geometric lattice of flats of $M^*$ and this has a very interesting application to coding theory. The relation is interesting in light of  \cite[Theorem 1]{JV13}, where one showed how generalized Hamming weights of linear codes, more generally, of matroids, can be determined via Betti numbers of Stanley-Reisner ring of an associated matroid complex. These Betti numbers, again, are determined by the
$\chi(S_{M_{|X})}$ for various subsets (cycles) $X$ of the ground set of this matroid $M$.

We are interested in a $q$-analogue of this relation wherein the notions of matroids and simplicial complexes are replaced by $q$-matroids and $q$-simplicial complexes, respectively. The notion of a $q$-matroid was introduced in \cite{JP} (see also Crapo \cite{Cr} where more general notion is studied). Jurrius and Pellikaan \cite{JP} also outline the connections between rank metric codes, which has been a topic of much current interest (see, e.g., the recent survery by Gorla \cite{G}), and $q$-matroids. A $q$-analogue of the notion of a simplicial complex goes back at least to Rota \cite{Rota} and it is also studied for more general notion of $q$-posets by 
Alder \cite{Ald} in his thesis. The combinatorics and the topology of the $q$-matroid complexes, i.e. $q$-simplicial complexes of independent spaces of $q$-matroids, have been studied in more recent papers \cite{GPR21, JPV21, P22}. The singular homology groups of a (punctured) $q$-matroid complex $\Delta_{\M}$ equipped with the order topology is determined partially in \cite{GPR21} and later completely determined in \cite{P22} by considering the order complex $I_{\M}$ corresponding to the $q$-matroid. The weak homotopy equivalence between the a punctured $q$-matroid complex $\mathring{\Delta}_{\M}$ and the associated order complex $I_{\M}$ implies that the absolute value of the Euler characteristic of the $I_{\M}$ is equal to the rank of the only nontrivial singular homology group of shellable $q$-complex. On the other hand, in \cite{JPV21} it has been proved that the generalized rank weights of a Gabidulin rank metric code can be determined by the Möbius numbers of the lattice of flats of an associated $q$-matroid (which is dual to the lattice of $q$-cycles of the dual $q$-matroid.) Thus it is relevant to pose the question, whether a relation between $\chi(I_{\M})$ and the Möbius numbers of the lattice of cycles (or, dually, the lattice of flats) of the $q$-complexes might help to determine the generalized rank weights of a Gabidulin rank metric codes by the singular homology groups. This is a question unanswered in \cite{GPR21}.

Our main result is Theorem \ref{conj:1} where we give a result, analogous to (\ref{foundation}), but different from it. It recovers (\ref{foundation}) if we set $q=1$. As far as we can see, however, it unfortunately does not enable us to read $\mu(0,1)$ off, directly from $\chi(I_{\M})$ and the rank of the mentioned homology group. But we believe that it can be an important theoretical result, and it also provides a link between data of the lattice of flats of $\M^*$ and the lattice of independent sets of $\M$. Other than that, we obtain a characterization of nonzero $\chi$-value of the order complex $\chi(I_{\M}$) with respect to some intrinsic properties of the ground space of the $q$-matroid. It turns out that the $q$-analogue of the result "$\chi(I_M) = 0$ is equivalent to $M^*$ has a loop", is not true. (For usual matroids $\chi(I_M)=\chi(S_M)$, also, as we will see in Section \ref{prelim},  where $ S_M$ is the simplicial complex of independent sets, and $I_M$ is the corresponding order complex). This  equivalence played a crucial role in \cite{JV13} in determining the generalized Hamming weights from the graded Betti numbers of the  Stanley-Reisner ring obtained from  matroid of the parity check matrices for the linear code. This indicates that there is no easy way to determine the generalized rank weights,  by considering the order topology on the poset of the associated $q$-matroid complex of the code.

The paper is organized as follows. We collect some preliminaries about the basic notions such as $q$-matroids, $q$-complexes, vector rank metric codes, finite topological spaces in Section \ref{prelim}. Two new proofs of the known result on the relation between Euler characteristic and M{\"o}bius numbers of a matroid complex are provided in Section \ref{classical}. These proofs are perhaps more straightforward than the one proved in \cite{B} using the Tutte polynomial and broken circuits. These proofs work as model for the proof in the $q$-analogue case that we prove in Section \ref{qproof}. In Section \ref{sec:5}, we show that for nontrivial $q$-matroids, the Euler characteristic of the corresponding order complexes is always nonzero, unlike in the case of classical matroids. We summarize in Section \ref{sec:6} the results and their implications in answering whether the generalized rank weights of a $q$-matroid can be determined by the singular homology groups of the corresponding $q$-complex.

\textbf{Notations} 
$E$ - $\{1, \ldots, n\}$

$\E$ - $\Fq^n$

$S$ or $S_M$ - Simplicial complex corresponding to a matroid $M$

$\chi(S)$ - The reduced Euler characteristic of a simplicial complex $S$

$\Delta$ or $\Delta_{\M}$ - (not simplicial, but only) $q$-complex corresponding to a $q$-matroid $\M$.

$I_{M}$ - order complex associated to a matroid $M$

$I_{\M}$ - order complex associated to a $q$-matroid $\M$

\section{Preliminaries} \label{prelim}

Throughout this paper, $q$ denotes a prime power and $\Fq$ the finite field with $q$ elements. Fixing a positive integer $n$, we use $E$ to denote a finite set with $n$ elements and $\mathcal{E}$ to denote an $n$-dimensional vector space over $\Fq$. For a non-negative integer $k \leq n$, we denote by ${n \brack k}_q$ the $q$-binomial coefficient given by $\frac{\prod_{i=0}^{k-1}(q^n - q^i)}{\prod_{i=0}^{k-1}(q^k-q^i)}$. The set of all subsets of $E$ is denoted by $2^E$, whereas $\Sigma(\mathcal{E})$ denotes the set of all $\Fq$-subspaces of $\mathcal{E}$. For $U_1, \ldots, U_r \in \Sigma(\mathcal{E})$, $\langle U_1, \ldots, U_r\rangle$ denotes the $\Fq$-space generated by $U_i$'s. For any $U \in \Sigma(\mathcal{E})$, $\dim U$ is the $\Fq$-dimension of $U$, if not otherwise stated.

We recall basic definitions and results concerning the notions such as ($q$-)matroids, ($q$-)shellability, ($q$-)simplicial complexes, and finite topological spaces etc. We mention relevant references for each of these topics for details.

\subsection{($q$-)Matroids}
   

\begin{defn} \label{def:matroid} 
A matroid is a pair $(E,r)$ where $E$ is a finite set and $r: 2^E \rightarrow \mathbb{N}_0$ is a function satisfying: \begin{list}{}{\leftmargin=1em\topsep=1.5mm\itemsep=1mm}
\item[{\rm (R1)}] If $X \subseteq E$, then $0 \leqslant r(X) \leqslant|X|$,
\item[{\rm (R2)}] If $X \subset Y \subset E$, then $r(X) \le r(Y)$,
\item[{\rm (R3)}] If $X,Y$ are subsets of $E$, then \[r(X \cap Y) + r(X \cup Y) \leqslant r(X)+r(Y).\]
\end{list}
\end{defn}

The function $r$ is called the rank function of the matroid. The rank of a matroid $M=(E,r)$ is $r(E)$. The nullity function $n$ of the matroid is given by $n(X) = |X|-r(X)$ for $X \subset E$. By (R1), this is an integer-valued non-negative function on $2^E$.
\begin{defn}
An independent set of a matroid is an $X$ such that $r(X)=|X|.$ Subsets of independent sets are then always independent.
\end{defn}

\begin{defn}
Let $\E=\mathbb{F}_q^n$, and let $\Sigma(\E)$ be the set of all $\mathbb{F}_q$-spaces of $\E$. A $q$-{\em matroid} is an ordered pair $\M= (\E,\rho)$  
where the function $\rho:\Sigma(\E)\to\mathbb{N}_0$
satisfies (P1)--(P3) below: 
\begin{list}{}{\leftmargin=1em\topsep=1.5mm\itemsep=1mm}
\item[{\rm (P1)}]  $0\leq \rho(X) \leq \dim X$ for all $X \in \Sigma(\E)$;
\item[{\rm (P2)}] $\rho(X) \le \rho(Y)$ for all $X,Y \in \Sigma(\E)$ with $X \subseteq Y$;
\item[{\rm (P3)}] $\rho(X+Y)+\rho(X \cap Y) \le \rho(X)+\rho(Y)$, for all $X,Y \in \Sigma(E)$.
\end{list}
\end{defn}

 The function $\rho$ is called the rank function of the $q$-matroid. The rank of a $q$-matroid $\M=(\E,\rho)$ is $rank(\M) = \rho(E)$. The nullity function $\eta$ of the $q$-matroid is given by $\eta(X) = \dim_{\Fq} X-\rho(X)$ for $X \in \Sigma(E)$. By (P1), this is an integer-valued non-negative function on $\Sigma(E)$.

For a $q$-matroid $\M=(\E,\rho)$, let $N_i=\{ X\in \Sigma(\E)\colon \eta(X)=i\}$. The minimal (w.r.t the inclusion) elements  of $N_i$ are called a \emph{$q$-cycles} of nullity $i$ of $M$. The $q$-cycles of nullity $1$ are the $q$-circuits of the $q$-matroid. 
\begin{defn}
Let $\mathcal{M}=(\E,\rho)$ be a $q$-matroid. Then a subspace $F \subseteq \E$ is called a \textit{$q$-flat} if $\rho(F \oplus \langle e \rangle) > \rho(F)$ for all $e \in \E \backslash F$.
\end{defn}

The following is well known. see e.g. \cite[Lemma 13]{JPV21}:
\begin{lemma} \label{fundamental}
Let $\mathcal{M} = (\E,\rho)$ be a $q$-matroid. Then $X \in \Sigma(\E)$ is a $q$-flat (of rank $r$) of a $q$-matroid $\mathcal{N}$ if and only if its orthogonal complement $X^{\perp}$ is a $q$-cycle (of nullity $\rho(\mathcal{M})-r$) for $\mathcal{M}^*$.
\end{lemma}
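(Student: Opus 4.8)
The plan is to prove the duality statement of Lemma~\ref{fundamental} by translating the defining conditions for $q$-flats and $q$-cycles into rank-function identities and then invoking the standard relation between the rank function $\rho$ of $\M$ and the rank function $\rho^*$ of the dual $q$-matroid $\M^*$. Recall that for $q$-matroids the dual rank function is given by $\rho^*(X) = \dim X - \rho(\E) + \rho(X^{\perp})$ for every $X \in \Sigma(\E)$, the exact $q$-analogue of the classical matroid duality formula; I would state this explicitly (or cite it) as the single external input, since everything else is bookkeeping. The nullity function of $\M$ is $\eta(X) = \dim X - \rho(X)$, and I would express both the ``$q$-flat'' condition and the ``$q$-cycle'' condition in terms of how $\rho$ (respectively $\eta$) behaves under small enlargements.

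First I would unwind the $q$-flat condition. A subspace $F$ is a $q$-flat of $\N$ precisely when $\rho(F \oplus \langle e\rangle) > \rho(F)$ for every $e \notin F$; equivalently, $F$ is maximal among subspaces of its rank, so that adjoining any line strictly increases the rank. Dually, I would characterize a $q$-cycle of nullity $i$ of $\M$ as a subspace $Z$ that is minimal with respect to inclusion among all subspaces satisfying $\eta(Z) = i$; minimality means that deleting any line (passing to any hyperplane $H \subset Z$) strictly decreases the nullity, i.e.\ $\eta(H) < \eta(Z)$ for every hyperplane $H$ of $Z$. The technical heart is to set up the order-reversing bijection $X \mapsto X^{\perp}$ on $\Sigma(\E)$ and check that it carries the maximality condition defining a flat of $\M^*$ to the minimality condition defining a cycle of $\M$, using that $(X+Y)^{\perp} = X^{\perp} \cap Y^{\perp}$ and $(X\cap Y)^{\perp} = X^{\perp} + Y^{\perp}$.

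The key computation is to relate $\rho^*$-increments on the flat side to $\eta$-decrements on the cycle side. Writing $G = F^{\perp}$, adjoining a line $\langle e\rangle$ to $F$ corresponds under orthogonality to intersecting $G$ with the hyperplane $\langle e\rangle^{\perp}$, i.e.\ to passing from $G$ to a hyperplane $G' = G \cap \langle e\rangle^{\perp}$ of $G$. Plugging $X = F \oplus \langle e\rangle$ and $X = F$ into the duality formula $\rho^*(X) = \dim X - \rho(\E) + \rho(X^\perp)$, the inequality $\rho^*(F\oplus\langle e\rangle) > \rho^*(F)$ for the flat of $\M^*$ becomes, after substituting $\rho(\E^*) = \dim\E - \rho(\E)$ to track the rank shift, exactly the statement $\eta_{\M}(G') < \eta_{\M}(G)$ for the corresponding hyperplane $G'$ of $G$. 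Since every hyperplane of $G$ arises this way as $e$ ranges over $\E \setminus F$, the maximality of $F$ is equivalent to the minimality of $G = F^{\perp}$, which is precisely the $q$-cycle condition. Finally I would track the numerical invariants: if $F$ has rank $r$ as a flat of $\N = \M^*$, then reading off the nullity of $Z = F^{\perp}$ from the same duality formula yields $\eta_{\M}(Z) = \rho(\M) - r$, matching the claimed nullity.

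The main obstacle I anticipate is purely organizational rather than deep: one must be careful that the dual rank formula is applied with the correct reference to which $q$-matroid's rank appears (the statement mixes $\M$, $\M^*$, and an auxiliary $\N$, and the displayed lemma even writes $\N$ where context suggests $\M^*$), and that the order-reversing nature of $\perp$ correctly swaps ``maximal of given rank'' with ``minimal of given nullity.'' A secondary subtlety is verifying that no degenerate cases (for instance when $F = \E$ or when $Z$ meets loops) break the line-by-line correspondence between enlargements of $F$ and hyperplanes of $F^{\perp}$; these I would dispatch by noting that the bijection $e \mapsto \langle e\rangle^{\perp} \cap G$ between lines outside $F$ and hyperplanes of $G$ is exhaustive precisely because $\perp$ is a lattice anti-isomorphism on $\Sigma(\E)$.
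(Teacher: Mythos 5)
The paper does not actually prove this lemma --- it is quoted as well known with a citation to \cite[Lemma 13]{JPV21} --- so there is no in-paper argument to compare against. Your proof is the standard duality computation and it is essentially sound: take the dual rank formula $\rho^*(X)=\dim X-\rho(\E)+\rho(X^{\perp})$ as the one external input, observe that adjoining a line $\langle e\rangle$ to $F$ corresponds under $\perp$ to passing from $G=F^{\perp}$ to the hyperplane $G\cap\langle e\rangle^{\perp}$, that this correspondence between lines outside $F$ and hyperplanes of $G$ is exhaustive because $\perp$ is a lattice anti-isomorphism, and that the rank increment on one side translates exactly into the nullity decrement on the other. Together with the (easy, but worth one line) facts that the flat condition is equivalent to maximality among subspaces of the same rank and that minimality in $N_i$ is equivalent to every hyperplane having strictly smaller nullity (both using that $\eta$ is monotone, which follows from (P1)--(P3)), this gives a complete proof.

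The one genuine slip is in your resolution of the ambiguous symbol $\N$. You read $\N=\M^*$, but then your own computation gives $\eta_{\M}(F^{\perp})=\rho(\M^*)-r$, not $\rho(\M)-r$, and moreover $F^{\perp}$ comes out as a cycle of $\M$, not of $\M^*$ as the lemma asserts. The reading that makes the statement internally consistent is $\N=\M$: if $F$ is a flat of $\M$ with $\rho(F)=r$, then $\eta^*(F^{\perp})=\dim F^{\perp}-\rho^*(F^{\perp})=\rho(\E)-\rho(F)=\rho(\M)-r$, and $F^{\perp}$ is a cycle of $\M^*$, exactly as claimed. This is also how the lemma is used elsewhere in the paper (flats of a $q$-matroid are dual to cycles of the dual $q$-matroid). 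Since duality is an involution your version is logically equivalent to the intended one, but as written your final numerical check does not match your own setup; swapping to $\N=\M$ fixes both the labelling and the arithmetic without changing any other step.
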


We also have:
\begin{defn}
And independent element of $\Sigma(\E)$ is a subspace $U$ of $\E$ such that $\rho(U)= \dim U$
(subspaces of independent spaces are then always independent).
A basis of $\E$ is a subspace $U$ of $\E$, which is not strictly contained in any independent subspace.
Let $\mathcal{B}$ denote the set of bases.
\end{defn}

The set of bases  $\mathcal{B}$ satisfies:
\begin{itemize}
\item[(B1)] $\mathcal{B} \ne \emptyset.$
\item[(B2)] For all $B_1, B_2 \in  \mathcal{B}$, if $B_1 \subset B_2$, then $B_1 = B_2.$
\item[(B3)] For all $B_1, B_2 \in  \mathcal{B}$ and for every subspace $A$ of codimension $1$ in $B_1$ satisfying
$B_1 \cap B_2 \subset A$, there is a $1$-dimensional subspace $u$ of $B_2$ such that $A + u \in B$.
\item[(B4)] For all $A, B \in \Sigma(E)$, if $I$ and $J$ are maximal intersections of some members of
$\mathcal{B}$ with $A$ and $B$, respectively, there exists a maximal intersection of a basis and
$A + B$ that is contained in $I + J$.

\end{itemize}
\begin{rem}
{\rm It is well known (See \cite[Theorem 6.1]{JP})  that if one takes (B1),(B2),(B3),(B4) as axioms and defines independent spaces $V$ as spaces contained in some basis, and defines $\rho(U)$, for any $U \in \Sigma(E)$, as the largest $\dim V$, for the and independent $V$ contained in $U$, then $\rho$ satisfies (P1),(P2),(P3). Hence (P1),(P2),(P3) and (B1),(B2),(B3),(B4)   are equivalent axiom systems. The fact that one needs an "extra" axiom (B4) reveals a profound difference between $q$-matroids and usual matroids. We will use this later, in Lemma \ref{nocommonx}, which has no analogue for usual matroids.}
\end{rem}
Since matroids (and $q$-matroids) give rise to  posets, in fact lattices, of ($q$)-flats and ($q$)-cycles ordered by inclusion, the following definition of  M{\"o}bius numbers, taken from \cite[p. 344]{R}, will also be useful: 
\begin{defn}\cite{R}
For a locally finite poset $(L,<)$ we set 
$\mu(x,x)=1$ for all nodes $x$, and if $x \le y$ for a node $y$ we set $\mu(x,y) = -\Sigma_{x \le z<y} \mu(x,z)$.
Moreover we set $\mu(x)=\mu(0,x)$ and $\mu(L)=\mu(0,1)$ if $L$ has a minimum element $0$ and a maximum element $1$.
\end{defn}

The following result will also be useful:
\begin{prop}\cite{Stanley}\label{q-binom}
The $q$-analogue of the binomial theorem is 
\begin{equation}\label{eq:q-binomial}
\prod\limits_{k=0}^{n-1} (1 - q^kt) = \sum\limits_{k=0}^{n} (-1)^kq^{{k \choose 2}}{n \brack k}_q t^k.
\end{equation}
\end{prop}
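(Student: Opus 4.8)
The plan is to establish this classical $q$-binomial theorem by induction on $n$, reducing the inductive step to the $q$-Pascal recurrence. For the base case $n=0$ both sides equal $1$, since the empty product on the left is $1$ and only the $k=0$ term survives on the right. For the inductive step I would assume the identity for $n$ and multiply both sides by the factor $(1-q^{n}t)$, so that the left-hand side becomes $\prod_{k=0}^{n}(1-q^{k}t)$. On the right one then obtains
\[
\sum_{k=0}^{n}(-1)^k q^{{k \choose 2}}{n \brack k}_q t^k \;-\; q^{n}\sum_{k=0}^{n}(-1)^k q^{{k \choose 2}}{n \brack k}_q t^{k+1},
\]
and after shifting the index in the second sum I would compare the coefficient of $t^k$ on each side.

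The next step is to pull the common factor $(-1)^k q^{{k \choose 2}}$ out of the coefficient of $t^k$. The only arithmetic subtlety is the power of $q$ attached to the ${n \brack k-1}_q$ term: it equals $n + {k-1 \choose 2} - {k \choose 2} = n-k+1$, using ${k \choose 2} - {k-1 \choose 2} = k-1$. Thus matching the coefficient of $t^k$ reduces precisely to the identity
\[
{n \brack k}_q + q^{\,n-k+1}{n \brack k-1}_q = {n+1 \brack k}_q,
\]
which is one of the two standard forms of the $q$-Pascal rule. This last identity is verified directly from the definition ${n \brack k}_q = \frac{\prod_{i=0}^{k-1}(q^n-q^i)}{\prod_{i=0}^{k-1}(q^k-q^i)}$ recalled in this section, and the boundary coefficients $k=0$ and $k=n+1$ are checked separately (the latter using $n + {n \choose 2} = {n+1 \choose 2}$).

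Since the result is entirely standard, there is no deep obstacle; the only thing requiring care is the bookkeeping with the specific normalization of ${n \brack k}_q$ used here, which carries the factor $q^{{k \choose 2}}$ implicitly and governs exactly where that power of $q$ lands in the recurrence. An alternative, and perhaps cleaner, route avoids induction altogether: writing $f(t)=\prod_{k=0}^{n-1}(1-q^k t)$ one has the functional equation $(1-q^{n}t)\,f(t)=(1-t)\,f(qt)$, and substituting $f(t)=\sum_k c_k t^k$ turns this into the one-term recurrence $c_k(1-q^k)=c_{k-1}(q^n-q^{k-1})$ with $c_0=1$; solving it and re-expressing the resulting product telescopes directly into $c_k=(-1)^k q^{{k \choose 2}}{n \brack k}_q$, which is the claimed coefficient.
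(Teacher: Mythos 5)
Your proof is correct. Note, however, that the paper does not prove this statement at all: it simply cites \cite[p.~75]{Stanley} and treats the $q$-binomial theorem as a known fact, so there is no internal proof to compare against. Your inductive argument is the standard one, and the arithmetic checks out: the exponent attached to ${n \brack k-1}_q$ after factoring out $(-1)^k q^{{k\choose 2}}$ is indeed $n+{k-1\choose 2}-{k\choose 2}=n-k+1$, and the resulting identity ${n \brack k}_q + q^{\,n-k+1}{n \brack k-1}_q = {n+1 \brack k}_q$ is the correct form of the $q$-Pascal rule (the other form, ${n+1 \brack k}_q = q^k{n \brack k}_q + {n \brack k-1}_q$, would not match here, so your care with the normalization is warranted). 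Your alternative via the functional equation $(1-q^n t)f(t)=(1-t)f(qt)$ is also valid and arguably cleaner, since the recurrence $c_k(1-q^k)=c_{k-1}(q^n-q^{k-1})$ telescopes directly to $(-1)^k q^{{k\choose 2}}{n \brack k}_q$ once one uses $\prod_{i=0}^{k-1}(q^k-q^i)=q^{{k\choose 2}}\prod_{j=1}^{k}(q^j-1)$. Either route would serve as a self-contained replacement for the external citation.
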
 

The Equation \eqref{eq:q-binomial} is known as \emph{$q$-binomial theorem} and for a proof, see, e.g., \cite[p. 75]{Stanley}.

\subsection{Shellability}\label{2.2}

We recall the relevant definitions and results regarding shellable simplicial complexes and shellable $q$-simplicial complexes (or $q$-complexes, in short). For more on simplicial complexes one can refer to \cite{Stan}. 
\begin{defn}
A simplicial complex $S$ is a collection $S$ of subsets of a finite set $V$ such that if $B \in
S$, and $A \subset B$, then $A \in S$.

The elements in $S$ are called 
faces. A face that is not properly contained in any other face is called a
facet. 
\end{defn}

\begin{defn}\cite{B}
Let $S$ be a pure simplicial complex. A shelling of $S$ is a linear order of the
facets of $S$ such that each facet meets the complex generated by its predecessors
in a non-void union of maximal proper faces.That is: The linear order
$F_1,\cdots,F_s$, of the facets of $S$ is a shelling if and only if
for each pair $F_i, F_j$ of facets such that $1 < i < j < t$ there is a facet $F_k$ satisfying
$1<k<j$ and an element $x \in F_j$ such that $F_i \cap F_j \subset F_k \cap F_j= F_j- x$. 
A complex is said to be shellable if it is pure and admits a shelling. \end{defn}

For a shellable simplicial complex, the \emph{restriction operator} is defined as follows.

\begin{defn}\cite{B}
Let $S$ be a pure simplicial complex with a shelling $F_1, \ldots, F_t$ on its facets. Let $S_i$ be the subcomplex generated by the first $i$ facets of $S$. Then
\[
\mathcal{R}(F_i) := \{x \in F_i \colon F_i \backslash x \in S_i\}.
\]
\end{defn}

An important class of simplicial complexes that is known to be shellable is of matroid complexes, i.e., complexes formed by the independent sets of matroids. For relevant background on shellability for simplicial complexes and proof of this result we refer to a survey article \cite{B} by Bj{\"o}rner. Now we recall $q$-analogues of these notions.

\begin{defn}\cite[Section 5]{Rota} \cite[Definition 2.1]{GPR21} By a $q$-complex on $E=\mathbb{F}_q^n$,  we mean a subset $\Delta$ of $\Sigma(E)$ satisfying
the property that for every $A \in \Delta$, all subspaces of $A$ are in $\Delta$.
Let $\Delta$ be a $q$-complex. Elements of $\Delta$ are called faces of $\Delta$. Faces of $\Delta$ that
are maximal (w.r.t. inclusion) are called the facets of $\Delta$. The dimension of $\Delta$ is
max $\{\dim A | A \in \Delta\}$, and it is denoted by $\dim \Delta$. We say that $\Delta$ is pure if all its facets have the same dimension.
\end{defn}
For a set $\{F_i\}_i$ of facets of $\Delta$, let 
$<\{F_i\}_i>$ be the subcomplex consisting of all subspaces of at least one of the $F_i.$
\begin{defn}\cite[Definition 4.1]{GPR21} \cite{Ald}
Let $\Delta$ be a pure $q$-complex on $\Sigma(E)$.
 A shelling of $\Delta$ is a linear
order $F_1,\cdots,F_t$ on the facets of $\Delta$ such that for each $j = 2,\cdots,t$, the $q$-complex $<F_j> \cap <F_1,\ldots,F_{j-1}>$ is generated by a nonempty set of maximal proper faces of $F_j$.
We say that a $q$-complex is $q$-shellable if it is pure and it admits a shelling.
\end{defn}
\begin{rem}
In \cite{Ald} and \cite{GPR21}, $q$-shellable $q$-complex were simply called shellable. Since, we are dealing with multiple objects, i.e., simplicial complexes and $q$-complexes, it is better to give a more precise notion of shellability.
\end{rem}

\begin{rem}
By abuse of notation, if $\prec$ is the ordering on the facets of $\Delta$ which defines the shelling, we simply say that $\prec$ is the shelling of $\Delta$.
\end{rem}

We recall that for matroids, subsets of independent sets are always independent, and for 
$q$-matroids subspaces of independent spaces are always independent. Hence the sets of independent sets and independent spaces form simplicial complexes and $q$-complexes, respectively.

\begin{defn}\label{Defn. 2.17}\ 
\begin{itemize}
\item For a matroid $M$, let $S_{M}$ be the simplicial complex consisting of its independent sets. $S_{M}$ is called the matroid complex associated to $M$.
\item For a $q$-matroid $\M$, let $\Delta_{\M}$ be the $q$-complex formed by the independent spaces of $\M$. $\Delta_{\M}$ is called the $q$-matroid complex associated to $\M$.
\end{itemize}
\end{defn}

Let $\E=\Fq^n$ and let $\M=(\E,\rho)$ be a $q$-matroid and let $\Delta_\M$ be the associated $q$-matroid complexes. Fix an ordering $0\prec 1\prec \alpha_2 \prec \cdots \prec \alpha_q$ on the elements of $\Fq$. It is important that $0$ is the smallest and $1$ is the next smallest element. This induces a lexicographic ordering, which we also denote by $\prec$, on the elements of $\Fq^n$. This further induces a lexicographic ordering, denoted again by $\prec$ on $(\Fq^n)^k$, where 
\[ 
(\Fq^n)^k = \{(v_1, \ldots, v_k) \colon v_i \in \Fq^n \}.
\]

For $k\leq n$, let $G(k,n)$ denote the Grassmannian of the $k$-spaces on $\Fq^n$. Each element $U$ of $G(k,n)$ corresponds to a unique generator matrix $G_U=[u_k,u_{k-1},\ldots, u_1]^T$ of order $k\times n$ and rank $k$ in reduced row echelon form, where $u_i$ is a column vector in $\Fq^n$. Using this correspondence, each element $U$ of $G(k,n)$ is therefore associated to a unique $(u_1,\dots,u_k)\in (\Fq^n)^k$ which we call the \emph{reduced generator} of $U$. We now define an ordering on the Grassmannian $G(k,n)$.

\begin{defn}\label{prec}
Let $U$ and $V$ be two elements of $G(k,n)$ such that $(u_1,\dots,u_k)\in (\Fq^n)^k$ and $(v_1,\dots,v_k)\in (\Fq^n)^k$ are the respective reduced generators. We define an ordering $\prec_q^k$ on $G(k,n)$ as the lexicographic ordering on the reduced generators, i.e.,
\[
U\prec_q^k V \Longleftrightarrow (u_1,\dots,u_k)\prec (v_1,\dots,v_k)\in (\Fq^n)^k.
\]
\end{defn}

\begin{thm}\cite[Theorem 4.4]{GPR21}\label{q-shell}
Let $\Delta_\M$ be a $q$-matroid complex of dimension $r$. Then $\prec_q^r$ is a shelling on $\Delta_\M$. In other words, $\Delta_\M$ is a shellable $q$-complex.
\end{thm}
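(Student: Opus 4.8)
The plan is to verify the shelling condition of the preceding definition directly for the order $\prec_q^r$. First I would note that $\Delta_\M$ is pure of dimension $r$: every facet is a basis of $\M$, all bases have dimension $\rho(\E)=r$, so each facet is an $r$-dimensional subspace whose maximal proper faces are exactly its hyperplanes (codimension-$1$ subspaces). Fix $j\ge 2$ and set $\Gamma_j:=\langle F_j\rangle\cap\langle F_1,\ldots,F_{j-1}\rangle$. Because $\Gamma_j$ is a subcomplex of the $q$-complex $\langle F_j\rangle$ of all subspaces of $F_j$, a subspace $A\subseteq F_j$ lies in $\Gamma_j$ exactly when $A\subseteq F_i$ for some $i<j$. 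The statement to prove then reduces to two claims: (i) every maximal face of $\Gamma_j$ is a hyperplane of $F_j$; and (ii) $\Gamma_j$ contains at least one such hyperplane. Together these say that $\Gamma_j$ is generated by a nonempty family of maximal proper faces of $F_j$.

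For (i) I would argue by contradiction. Suppose $A$ is a maximal face of $\Gamma_j$ with $\dim A\le r-2$, and pick an earlier basis $F_i$ ($i<j$) with $A\subseteq F_i$; then $A\subseteq F_i\cap F_j$. Extend $F_i\cap F_j$ (which is proper in $F_j$, since $F_i\ne F_j$ and both have dimension $r$) to a hyperplane $H$ of $F_j$, so $A\subseteq F_i\cap F_j\subseteq H$. Now apply the basis-exchange axiom (B3) to $B_1=F_j$ and $B_2=F_i$ with the codimension-$1$ subspace $H$ (note $B_1\cap B_2=F_i\cap F_j\subseteq H$): this produces a $1$-dimensional $u\subseteq F_i$ with $B':=H+u\in\mathcal{B}$. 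Thus $B'$ is a basis containing $H\supsetneq A$. If one can guarantee $B'\prec_q^r F_j$, then $B'$ is among $F_1,\ldots,F_{j-1}$, so $H\in\Gamma_j$, contradicting the maximality of $A$; the same construction applied to $A=F_i\cap F_j$ for any $i<j$ simultaneously yields (ii).

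The main obstacle is exactly this order control $B'\prec_q^r F_j$. Here I would exploit that $B'$ and $F_j$ share the common hyperplane $H$: two $r$-spaces containing a fixed hyperplane are compared by $\prec_q^r$ only through their one extra direction, so the comparison is governed by how the reduced row echelon forms of $H+u$ and $F_j$ differ in their pivot and tail data. Since $u$ is drawn from the lex-smaller basis $F_i\prec_q^r F_j$, the expectation is that extending $H$ by a direction inside $F_i$ cannot raise the reduced generator above $(u_1,\ldots,u_r)$; making this precise requires careful bookkeeping of how passing $H+u$ to reduced row echelon form rearranges pivots. The delicate point is that (B3) only asserts the existence of \emph{some} admissible $u$, whereas the lex bound needs $u$ (or the choice of $H$) tuned to the echelon structure of $F_j$. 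I would therefore either refine the exchange step, choosing $H$ and $u$ compatibly with the echelon form of $F_j$, or bypass (B3) altogether and construct the earlier basis $B'$ by an explicit echelon-form manipulation that visibly lowers the reduced generator. Once $B'\prec_q^r F_j$ is secured, both (i) and (ii) follow, and $\prec_q^r$ is a shelling.
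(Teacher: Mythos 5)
Your setup is sound: purity of $\Delta_\M$, the identification of $\langle F_j\rangle\cap\langle F_1,\ldots,F_{j-1}\rangle$ with the set of subspaces of $F_j$ contained in some earlier facet, and the reduction of the shelling condition to your claims (i) and (ii) are all correct, and the idea of producing an earlier facet through a hyperplane of $F_j$ via the exchange axiom is the natural strategy. But the proof is not complete: the entire argument hinges on the order control $B'=H+u\prec_q^r F_j$, and you never establish it. Axiom (B3) is purely existential; it returns \emph{some} one-dimensional $u\subseteq F_i$ with $H+u\in\mathcal{B}$, and there is no reason the resulting basis precedes $F_j$ in the lexicographic order on reduced generators. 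Indeed $H+u$ and $F_j$ differ only in the direction transverse to their common hyperplane $H$, and which of the two reduced generators is lex-smaller depends on how that extra direction interacts with the pivot structure of $H$ --- information that (B3) does not see. Knowing only $F_i\prec_q^r F_j$ does not transfer to $H+u\prec_q^r F_j$, since $u$ is just one line of $F_i$ and the hyperplane $H$ was chosen inside $F_j$, not inside $F_i$. Your own text concedes this (``making this precise requires careful bookkeeping,'' ``I would therefore either refine the exchange step \ldots or bypass (B3) altogether''), which is a statement of intent rather than a proof.

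This missing step is not a routine verification; it is the technical heart of the theorem. The paper does not reprove the result but imports it from \cite[Theorem 4.4]{GPR21}, where the argument is carried out by explicitly manipulating reduced row echelon forms and leading indices (the ``profiles'' of Definition \ref{def:lead}) so as to construct, for a given $F_i\prec_q^r F_j$, an earlier facet $F_k$ whose intersection with $F_j$ is a hyperplane containing $F_i\cap F_j$; the choice of the augmenting line and of the hyperplane must be tuned to the echelon structure of $F_j$, exactly the refinement you defer. Until you supply that construction (or an alternative that certifies $B'\prec_q^r F_j$), claims (i) and (ii) remain unproved and the proposal does not establish the theorem.
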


We now give another example of shellable simplicial complexes which are the key objects of this paper. It has been shown in \cite{P22} that these simplicial complexes are not matroid complexes.

\begin{defn}[Order complex] 
Let $\Delta$ be a $q$-complex. The order complex $I$ associated to $\Delta$ is the set of all finite chains of nonzero elements of $\Delta$, i.e.,
\[
I = \{ (U_1\subset \cdots \subset U_t)\colon U_i\in \Delta \}.
\]
The order complex is a simplicial complex and if $\Delta_\M$ is a $q$-matroid complex associated to a matroid $\M$, the associated order complex is denoted by $I_\M$.
\end{defn}

A chain $U=(U_0\subset \cdots \subset U_k)$ is called unrefinable if $\dim U_i = i$ for all $0\leq i\leq k$. Given two unrefinable chains $U=(U_0\subset \cdots \subset U_k)$ and $V=(V_0\subset \cdots \subset V_k)$, we define the reverse lexicographic ordering induced by the ordering $\prec_q^i$ on the Grassmannians $G(i,n)$ for $1\leq i\leq k$:
$U \prec_\ell V$ if and only if $e$ is the largest index such that $U_e\neq V_e$ and $U_e\prec_q^e V_e$.

\begin{thm}\cite{P22}
Let $\M$ be a $q$-matroid of rank $r$ and let $\Delta_\M$ be the associated $q$-matroid complex. Furthermore, let $I_\M$ be the order complex associated to $\M$. Then the reverse lexicographic ordering $\prec_\ell$ induced by $\prec_q^l$ is a shelling on $I_\M$. In other words, the order complex $I_\M$ of a $q$-matroid complex $\M$ is a shellable simplicial complex.
\end{thm}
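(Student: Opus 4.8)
The plan is to establish that the reverse lexicographic ordering $\prec_\ell$ on the unrefinable maximal chains (facets) of $I_\M$ satisfies the shelling condition: for each facet $F_j$ with $j \geq 2$, the intersection $\langle F_j \rangle \cap \langle F_1, \ldots, F_{j-1}\rangle$ is a nonempty union of maximal proper faces of $F_j$. First I would observe that the facets of $I_\M$ are exactly the maximal chains $U = (U_1 \subset \cdots \subset U_r)$ where $U_i \in \Delta_\M$ is independent of dimension $i$, so each facet corresponds to a complete flag of independent subspaces, and these are all of the same cardinality $r$, giving purity immediately. The key combinatorial device is to translate the shelling condition for order complexes into the standard criterion that for any facet $F_j$ and any earlier facet $F_i \prec_\ell F_j$, there exists a facet $F_k \prec_\ell F_j$ and a single vertex (a subspace $U_e$ in the chain) whose deletion $F_j \setminus \{U_e\}$ contains $F_i \cap F_j$ and equals $F_k \cap F_j$.

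The core of the argument will be a local-exchange analysis at each level of the flag. Given a facet $F_j = (U_1 \subset \cdots \subset U_r)$, a maximal proper face is obtained by deleting exactly one $U_e$ (for $1 \le e \le r-1$; the top element $U_r$ is forced since it is the single basis determining the flag's rank). I would characterize which of these deletions lie in $\langle F_1, \ldots, F_{j-1}\rangle$ in terms of whether the "gap" at level $e$ can be filled by a lexicographically smaller choice of $U_e$: namely, deleting $U_e$ produces a face in the earlier subcomplex precisely when there exists an independent $U_e'$ with $U_{e-1} \subset U_e' \subset U_{e+1}$, $\dim U_e' = e$, and $U_e' \prec_q^e U_e$, yielding a facet $F_k = (U_1 \subset \cdots \subset U_{e-1} \subset U_e' \subset U_{e+1} \subset \cdots \subset U_r)$ with $F_k \prec_\ell F_j$. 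The restriction set $\mathcal{R}(F_j)$ would then be identified as the set of those $U_e$ for which no such smaller replacement exists, i.e. the "forced" or minimal levels of the flag.

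The main obstacle I anticipate is verifying the \emph{nonemptiness} and the \emph{exact union} structure simultaneously, which is where the reverse (as opposed to forward) lexicographic order matters. One must check that when $F_i \prec_\ell F_j$ with $e$ the largest index at which they differ, the replacement can be localized to a single level so that the witnessing facet $F_k$ is itself earlier; the reverse ordering ensures that modifying a lower-indexed coordinate while keeping all higher-indexed $U_{e+1}, \ldots, U_r$ fixed produces a lexicographically smaller flag. The delicate point is that independence must be preserved under the exchange: one needs that inside the interval $[U_{e-1}, U_{e+1}]$ of the subspace lattice there is always a $\prec_q^e$-minimal independent subspace of dimension $e$ containing $U_{e-1}$ and contained in $U_{e+1}$, and that this minimal choice is consistent across the comparisons. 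I would handle this by appealing to the $q$-matroid axioms guaranteeing that independent subspaces of a fixed dimension inside such an interval exist and form a nonempty set closed appropriately, so that a unique $\prec_q^e$-least element is well defined. Once the exchange is shown to stay within $\Delta_\M$ and to strictly decrease the flag in $\prec_\ell$, the shelling condition follows and the conclusion that $I_\M$ is shellable is immediate.
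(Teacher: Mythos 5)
Note first that the paper does not prove this theorem itself; it is imported from \cite{P22}, and the only traces of the argument visible here are the quoted Proposition \ref{PRO:4.4} and Corollary \ref{cor:4.5}. Those show that the reference's proof is indeed the level-by-level ``minimal replacement'' analysis you outline, so your strategy is the right one. However, your sketch has two genuine gaps. First, you assert that a maximal proper face of a facet $F_j=(U_1\subset\cdots\subset U_r)$ is obtained only by deleting $U_e$ for $e\le r-1$, the top element being ``forced.'' This is false: $F_j\setminus\{U_r\}$ is a maximal proper face, and it lies in the earlier subcomplex precisely when $U_{r-1}$ is contained in a basis $B\prec_q^r U_r$. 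This top level is the \emph{only} place where the $q$-matroid structure (basis exchange, i.e.\ the shellability of $\Delta_\M$ from Theorem \ref{q-shell}) actually enters; at every level $e<r$ all $e$-spaces in the interval $(U_{e-1},U_{e+1})$ are automatically independent because $U_{e+1}$ is, so ``appealing to the $q$-matroid axioms'' there is vacuous. By excluding level $r$ you never invoke the one ingredient the theorem cannot do without, and you cannot handle pairs $F_i\prec_\ell F_j$ for which the only usable exchange is at the top.

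Second, the ``delicate point'' you flag is not resolved by your proposed fix. Given $F_i=(V_1\subset\cdots\subset V_r)\prec_\ell F_j$ with $e$ the largest differing index, the witness $V_e\prec_q^e U_e$ satisfies $V_e\subset U_{e+1}$ but need not contain $U_{e-1}$, so it need not lie in the interval $[U_{e-1},U_{e+1}]$; hence it does not certify that $U_e$ fails to be the $\prec_q^e$-minimum of that interval. The existence of a unique $\prec_q^e$-least element of the interval is trivial (the order is total); the issue is to show that $U_e$ is \emph{not} that least element, or, when it is, to relocate the exchange to another index at which $F_i$ and $F_j$ differ while keeping the covering condition $F_i\cap F_j\subseteq F_k\cap F_j$. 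This is exactly the coherence encoded in Corollary \ref{cor:4.5} via minimum nonzero vectors and reduced row echelon forms (cf.\ Lemma \ref{lemma:lastrow}), and it is the real combinatorial content of the theorem; your proposal names the obstacle but does not supply the argument.
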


\subsection{Topological preliminaries}

Finite topological spaces are simply topological spaces having only a finite number of points. We simply call these finite spaces. The study of finite spaces was introduced by Alexandroff \cite{Alex} and thereafter, continued by Stong \cite{Stong} and McCord \cite{McCord}. For the theory of finite spaces, we refer to the expository article by May \cite{May2012} and the book by Barmak \cite{Bar11}. In this subsection, we discuss finite spaces induced from certain finite posets related to matroids and $q$-matroids. Also, for the convenience of the reader, we provide a quick review of the relevant notions and results.

First we recall the definitions of two topological spaces associated to a finite simplicial complex and a finite poset, respectively.

\begin{defn}
{\rm Let $S$ be a finite simplicial complex on a vertex set $V$. Let $|S|$ denote the subset of all functions $f : V \rightarrow [0,1]$ such that

\begin{enumerate}
    \item supp$(f) := \{v \in V \colon f(v) \neq 0\}$ is a simplex in $S$,
    
    \item $\sum_{v \in S} f(v)=1$.
\end{enumerate}
 Then taking the usual Euclidean topology on $[0,1]$, the set $|S|$ equipped with the subspace topology of the product space $[0,1]^{\#V}$ is called the \emph{geometric realization} of the simplicial complex $S$.}
\end{defn}
 Thus, in short, the geometric realization $|S|$ of a finite simplicial complex $S$ is a compact subspace of some real Euclidean space. Next we define a finite space which shows an interesting connection between finite posets and finite $T_0$ spaces (but not $T_1$)\footnote{Recall that a topological space $X$ is $T_0$ if given any two distinct points of $X$, at least one of them is contained in an open set that does not contain the other point.}. Note that, if the finite spaces are $T_1$, the topology is necessarily discrete since any two points are separated by two open sets which contain only one of them. Thus the finite $T_1$ spaces are not so interesting, whereas the finite $T_0$ spaces have a rich structure.
 
\begin{defn}\label{ordertopo}
{\rm Let $(P, \leq)$ be a finite poset. A subset $U \subseteq P$ is called a \emph{down-set} if for any $y \in U$ and $x \in P$ with $x \leq y$, we have $x \in U$. The \emph{order topology} $\mathcal{K}(P)$ on $P$ is defined by declaring that the open sets in $\mathcal{K}(P)$ are precisely the down-sets in $P$. } 
\end{defn}
The order topology makes the finite poset a $T_0$ space. To see the reverse correspondence, i.e. from a finite $T_0$ space to a finite poset, we refer to \cite{Bar11} or \cite[Section 5.1]{GPR21}.

Let $P$ be either the poset of non-empty simplices of a finite simplicial complex $S$, or a poset of non-zero subspaces of a finite-dimensional vector space of a finite field, ordered by inclusion. We can associate three different kinds of topological spaces:

\begin{enumerate}
\item $\mathcal{K}(P)$ is the order topology of Definition \ref{ordertopo}.

\item Following for example \cite[Section 5]{McCord} $P$  determines a (new) simplicial complex $I(P)$: Any $i$-simplex of $I(P)$ is a chain $A_0 \subseteq A_1 \subseteq \cdots \subseteq A_i$ of subsets of $P$ of length $i$. This simplicial complex whose faces are chains in $P$ is called the \emph{order complex} of the poset $P$. Now one can consider the Euclidean space $|I(P)|$ which is the geometric realization of the finite simplicial complex $I(P)$.

\item Since also the simplicial complex from (2) has an underlying poset structure, one can consider the order topology $\mathcal{K}(I(P))$ of the chain complex $I(P)$.
\end{enumerate}

Additionally,
\begin{enumerate}
\item[(4)] if the original poset $P$ is a simplicial complex $S$ then we have one more topological space, which is the geometric realization $|S|$ of $S$.
\end{enumerate}

Since the posets corresponding to both matroid and $q$-matroid complexes have a unique minimal element, which are the empty set and the zero space, respectively, these posets are contractible. Thus the singular homology groups are trivial. Therefore, we will consider the posets with the unique minimal element removed. 

\textbf{Notation}: For the rest of this paper, we use $\mathring{S}$ and $\mathring{\Delta}$ to denote a punctured simplicial complex $S \backslash \{\emptyset\}$ and a punctured $q$-simplicial complex $\Delta \backslash \{\{0\}\}$, respectively. The notation ${I}_M$ is used to denote the order complex corresponding to the punctured simplicial complex $\mathring{S}_M$ associated to the matroid $M$, i.e., $I_M=I(\mathring{S}_M)$. We use the same notations to denote the underlying posets too.
\begin{thm}\cite[Theorem 1.4.6]{Bar11}\label{whe1}
Let $X$ be a finite $T_0$ space and $I(X)$ be the order (or the chain) complex of the poset associated to $X$. Then $X$ and $|I(X)|$ are weak homotopy equivalent.
\end{thm}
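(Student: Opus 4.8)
The plan is to follow McCord's classical argument, exhibiting an explicit weak homotopy equivalence $\mu\colon |I(X)|\to X$ and verifying its defining property against a convenient basis of the order topology. Recall (Definition \ref{ordertopo}) that the open sets of $X$ are the down-sets, so each point $x\in X$ has a smallest open neighbourhood $U_x=\{y\in X\colon y\le x\}$, and the family $\{U_x\}_{x\in X}$ is a basis for the topology. The map $\mu$ is defined by sending a point $\alpha\in|I(X)|$ to the minimum of its carrier: if $\alpha$ lies in the open simplex spanned by the chain $x_0<x_1<\cdots<x_k$, set $\mu(\alpha)=x_0$. First I would check continuity by computing preimages of basic open sets; one finds $\mu^{-1}(U_x)=\{\alpha\colon \mathrm{supp}(\alpha)\cap U_x\ne\emptyset\}=\bigcup_{v\le x}\mathrm{st}(v)$, the open star of the full subcomplex of $I(X)$ on the vertex set $U_x$, which is open. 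Since the $U_x$ form a basis, this establishes continuity of $\mu$.

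Next I would verify the hypotheses of McCord's local-to-global criterion: for each $x$, the restriction $\mu|\colon\mu^{-1}(U_x)\to U_x$ is a weak homotopy equivalence. On the target side, $U_x$ has $x$ as a maximum element, and a finite $T_0$-space with a maximum (equivalently, with a minimum) is contractible, because the identity and the constant map at $x$ are comparable in the pointwise order and comparable maps of finite spaces are homotopic. On the source side, the open star $\mu^{-1}(U_x)$ deformation retracts onto $|I(U_x)|$ by linearly pushing the barycentric weight off the vertices not lying in $U_x$ onto the (nonempty) face supported in $U_x$; and $|I(U_x)|$ is contractible because the poset $U_x$ has a maximum, so its order complex is a cone with apex $x$. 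Both source and target thus being (weakly) contractible, the restriction is a weak homotopy equivalence.

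Finally I would invoke McCord's theorem on basis-like open covers from \cite{Bar11}: if $f\colon Y\to Z$ is continuous and there is a basis-like open cover $\mathcal U$ of $Z$ such that $f|\colon f^{-1}(U)\to U$ is a weak homotopy equivalence for every $U\in\mathcal U$, then $f$ itself is a weak homotopy equivalence. Applying this with $Z=X$, $Y=|I(X)|$, $f=\mu$, and $\mathcal U=\{U_x\}$ (a basis, hence basis-like) yields the result.

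The step I expect to be the genuine obstacle is the last one: the local-to-global criterion is the substantive topological input, and its proof is far from formal. It rests on detecting weak equivalences through a basis, which is carried out by a simplicial-approximation and subdivision argument together with a compactness reduction (any map of a sphere or disc into $Z$ has image meeting only finitely many basis elements), allowing one to glue the local homotopy data; this is exactly the content one would cite from \cite{Bar11} rather than reprove. The remaining points---continuity of $\mu$, the cone and retraction contractibility arguments, and the homotopy of comparable maps---are routine once the conventions for the order topology are fixed.
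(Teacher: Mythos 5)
The paper states this theorem purely as a citation to Barmak's book and offers no proof of its own; your reconstruction is precisely the standard McCord argument given in that reference (the minimum-of-carrier map $\mu$, contractibility of the minimal open sets $U_x$ and of their preimages via the open-star retraction and the cone structure of $\mathcal{K}(U_x)$, and the basis-like open cover criterion), and it is correct. In particular, with the paper's down-set convention for the order topology, sending $\alpha$ to the \emph{minimum} of its carrier is exactly what makes $\mu^{-1}(U_x)$ a union of open stars and hence $\mu$ continuous, as you check.
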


\begin{thm}\cite[Theorem 1.4.12]{Bar11}\label{whe2}
Let $S$ be a finite simplicial complex. Then its geometric realization $|S|$ is weak homotopy equivalent to $\mathcal{K}(\mathring{S})$ which is equipped with the order topology.
\end{thm}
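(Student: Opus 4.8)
The plan is to derive Theorem~\ref{whe2} from Theorem~\ref{whe1} together with the classical fact that passing to a barycentric subdivision does not change the geometric realization of a simplicial complex. First I would observe that, since $S$ is a finite simplicial complex, the punctured complex $\mathring{S}$ is a finite poset under inclusion, and hence $\mathcal{K}(\mathring{S})$ is a finite $T_0$ space (as recorded after Definition~\ref{ordertopo}). This places us exactly in the hypotheses of Theorem~\ref{whe1}, applied to the finite $T_0$ space $X = \mathcal{K}(\mathring{S})$: it yields that $\mathcal{K}(\mathring{S})$ is weakly homotopy equivalent to $|I(\mathcal{K}(\mathring{S}))|$, the geometric realization of the order complex of the poset underlying $\mathcal{K}(\mathring{S})$.

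The next step is to identify this order complex. The poset underlying the finite $T_0$ space $\mathcal{K}(\mathring{S})$ is $\mathring{S}$ itself, i.e.\ the nonempty faces of $S$ ordered by inclusion; the orientation of the order is immaterial here, since a chain in a poset is also a chain in its opposite, so the order complex is unchanged. Therefore $I(\mathcal{K}(\mathring{S})) = I(\mathring{S})$ is the simplicial complex whose simplices are the chains $\sigma_0 \subsetneq \sigma_1 \subsetneq \cdots \subsetneq \sigma_k$ of nonempty faces of $S$. But this is precisely the barycentric subdivision $\mathrm{sd}(S)$ of $S$: each nonempty face of $S$ becomes a vertex (its barycenter), and each such chain spans a simplex.

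Finally I would invoke the standard homeomorphism $|\mathrm{sd}(S)| \cong |S|$ of geometric realizations (the barycenter map, with barycentric coordinates), valid for any finite simplicial complex. Combining the three facts — that $\mathcal{K}(\mathring{S})$ is weakly homotopy equivalent to $|I(\mathring{S})|$, that $I(\mathring{S}) = \mathrm{sd}(S)$, and that $|\mathrm{sd}(S)| \cong |S|$ — and using that a homeomorphism is in particular a weak homotopy equivalence and that weak homotopy equivalence is transitive, we conclude that $|S|$ is weakly homotopy equivalent to $\mathcal{K}(\mathring{S})$, as claimed.

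As for the main obstacle: once Theorem~\ref{whe1} is granted the argument is essentially bookkeeping, so the only genuine external input is the classical identity $|\mathrm{sd}(S)| \cong |S|$ together with the recognition that the order complex of the face poset is the barycentric subdivision. If one instead wanted a self-contained argument not relying on Theorem~\ref{whe1}, the hard part would be to construct the McCord map $\mu \colon |S| \to \mathcal{K}(\mathring{S})$, $\alpha \mapsto \mathrm{supp}(\alpha)$, and to verify — via McCord's basis-like open cover criterion applied to the minimal open sets $U_\sigma$, which are contractible because each possesses a maximum element — that $\mu$ restricts to a weak homotopy equivalence over every $U_\sigma$; but with Theorem~\ref{whe1} available this construction is unnecessary.
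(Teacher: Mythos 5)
Your proposal is correct. The paper itself gives no proof of Theorem~\ref{whe2} (it is quoted from Barmak along with Theorem~\ref{whe1}), and your derivation — apply Theorem~\ref{whe1} to the finite $T_0$ space $\mathcal{K}(\mathring{S})$, identify the order complex of the face poset $\mathring{S}$ with the barycentric subdivision $\mathrm{sd}(S)$, and use the homeomorphism $|\mathrm{sd}(S)|\cong|S|$ — is exactly the standard route by which this result is obtained from the first one in the cited source; your side remarks (that the opposite order yields the same order complex, and that a self-contained proof would instead require the McCord support map) are also accurate.
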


As a consequence of the above theorems, we observe the following key points that work as a main motivation behind our work.

\begin{prop} \label{equival}
Let $M$ be a matroid and $S_M$ be the matroid complex and $I_M$ be the order complex corresponding to $M$. Then the four topological spaces $|{S}_M|$, $\mathcal{K}(\mathring{S}_M),$
$|{I}_M|$, $\mathcal{K}(\mathring{I}_M)$ are weak homotopy equivalent
and have the same singular homology, which is also equal to the simplicial homology of $\mathring{S}_M$ and $\mathring{I}_M$.
\end{prop}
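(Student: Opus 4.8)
The plan is to splice together the two weak homotopy equivalences supplied by Theorem~\ref{whe1} and Theorem~\ref{whe2}, and then to promote the resulting chain of equivalences to an identification of homology groups. The argument is in essence bookkeeping, but the bookkeeping around the removal of the minimal element must be done with care.

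First, applying Theorem~\ref{whe2} to the matroid complex $S=S_M$ shows that $|S_M|$ is weak homotopy equivalent to $\mathcal{K}(\mathring{S}_M)$, linking the first two spaces. For the next link, observe that $\mathcal{K}(\mathring{S}_M)$ is a finite $T_0$ space whose associated poset is exactly $\mathring{S}_M$, the poset of non-empty independent sets of $M$ ordered by inclusion. By the notation fixed in the excerpt, the order complex of this poset is precisely $I_M=I(\mathring{S}_M)$, so Theorem~\ref{whe1} applied with $X=\mathcal{K}(\mathring{S}_M)$ yields that $\mathcal{K}(\mathring{S}_M)$ is weak homotopy equivalent to $|I(\mathring{S}_M)|=|I_M|$. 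Finally, a second application of Theorem~\ref{whe2}, now to the simplicial complex $S=I_M$, gives that $|I_M|$ is weak homotopy equivalent to $\mathcal{K}(\mathring{I}_M)$. Concatenating these three equivalences,
\[
|S_M|\ \sim\ \mathcal{K}(\mathring{S}_M)\ \sim\ |I_M|\ \sim\ \mathcal{K}(\mathring{I}_M),
\]
where $\sim$ denotes weak homotopy equivalence, and using that this relation is transitive, I conclude that all four spaces are weak homotopy equivalent.

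For the homology statement, I would invoke the standard fact that a weak homotopy equivalence induces isomorphisms on all singular homology groups; since the four spaces are joined by the chain above, their singular homology groups are all canonically isomorphic. It then remains to identify this common homology with the stated simplicial homology, for which I use the classical isomorphism between the singular homology of the geometric realization of a finite simplicial complex and its simplicial homology. Applied to $|S_M|$, this gives the simplicial homology computed on the non-empty faces of $S_M$, that is, the simplicial homology of $\mathring{S}_M$: the empty face contributes no point to $|S_M|$ and therefore does not enter its singular homology. The identical argument applied to $|I_M|$ identifies the common homology with the simplicial homology of $\mathring{I}_M$.

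The proof is an assembly of cited results, so the point requiring genuine attention---more a source of slips than a real obstacle---is the matching of the punctured and unpunctured objects. One must verify that the $T_0$ space produced by Theorem~\ref{whe2} is the correct input to Theorem~\ref{whe1}, so that the order complex $I(\mathcal{K}(\mathring{S}_M))$ coincides with $I_M$ as defined, and that the passage from $|S_M|$ to the simplicial homology of the punctured complex $\mathring{S}_M$ is justified precisely because the geometric realization records only non-empty faces. Once these identifications are secured, no individual step presents a further difficulty.
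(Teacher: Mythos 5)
Your proposal is correct and follows essentially the same route as the paper's own proof: one application of Theorem~\ref{whe1} to link $\mathcal{K}(\mathring{S}_M)$ with $|I_M|$, and two applications of Theorem~\ref{whe2} to link $|S_M|$ with $\mathcal{K}(\mathring{S}_M)$ and $|I_M|$ with $\mathcal{K}(\mathring{I}_M)$. Your additional care about the punctured versus unpunctured objects and the identification of singular with simplicial homology is sound, just more explicit than the paper's two-sentence argument.
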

\begin{proof}
 The weak homotopy equivalence of $\mathcal{K}(\mathring{S}_M),$ and $|{I}_M|$ follows from Theorem \ref{whe1}. The weak homotopy equivalence between $|{I}_M|$ and $\mathcal{K}(\mathring{I}_M)$ and that between $|{S}_M|$ and $ \mathcal{K}(\mathring{S}_M)$ follows from Theorem \ref{whe2}.
\end{proof}
Using Theorem \ref{whe1} (or \cite[Corollory 1]{McCord}) the analogous results for a $q$-matroid complex can be summarized as follows. 

\begin{prop} \label{equival2}
Let $\mathcal{M}$ be a $q$-matroid and $\Delta_{\M}$ be the $q$-complex and $I_{\M}$ be the order complex corresponding to $\M$. Then the spaces $\mathcal{K}(\mathring{\Delta}_{\M}),$  $|{I}_{\M}|$ $ \mathcal{K}(\mathring{I}_{\M})$ are weak homotopy equivalent and have the same singular homology, which is also equal to the simplicial homology of ${I}_{\M}$. 
\end{prop}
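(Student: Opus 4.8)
The plan is to mirror the proof of Proposition~\ref{equival} in the $q$-setting, exploiting the fact that McCord's theorem (Theorem~\ref{whe1}) refers only to the underlying finite poset of a finite $T_0$ space and therefore never requires $\Delta_{\M}$ to be simplicial. The one structural difference from the classical case is that $\Delta_{\M}$ is a $q$-complex rather than a simplicial complex, so it has no geometric realization $|\Delta_{\M}|$; this is precisely why the list here contains three spaces rather than the four of Proposition~\ref{equival}, the absent member being the analogue of $|S_M|$.

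First I would observe that the punctured $q$-complex $\mathring{\Delta}_{\M}$, ordered by inclusion of nonzero independent subspaces, is a finite poset (finite because $\Sigma(\E)$ is finite), so $\mathcal{K}(\mathring{\Delta}_{\M})$ is a finite $T_0$ space. By construction the order complex of the poset associated to this space is exactly the chain complex $I_{\M} = I(\mathring{\Delta}_{\M})$ (the direction convention in recovering the poset from the topology is irrelevant, since chains are unaffected by reversal). Applying Theorem~\ref{whe1} with $X = \mathcal{K}(\mathring{\Delta}_{\M})$ then yields a weak homotopy equivalence between $\mathcal{K}(\mathring{\Delta}_{\M})$ and $|I_{\M}|$.

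Next, since $I_{\M}$ is a finite simplicial complex, Theorem~\ref{whe2} applied with $S = I_{\M}$ gives a weak homotopy equivalence between $|I_{\M}|$ and $\mathcal{K}(\mathring{I}_{\M})$. Chaining these two equivalences shows that the three spaces $\mathcal{K}(\mathring{\Delta}_{\M})$, $|I_{\M}|$ and $\mathcal{K}(\mathring{I}_{\M})$ are weak homotopy equivalent. For the homology statement I would invoke the standard fact that a weak homotopy equivalence induces isomorphisms on all singular homology groups, so the three spaces share a common singular homology; and the equally standard identification of the singular homology of the geometric realization of a finite simplicial complex with its simplicial homology gives $H_*^{\mathrm{sing}}(|I_{\M}|) \cong H_*^{\mathrm{simp}}(I_{\M})$, which completes the claim.

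There is no serious technical obstacle here beyond bookkeeping: the content is a transfer of the cited finite-space results. The only point that genuinely deserves care — and the place where the argument departs from the classical one — is the verification that McCord's equivalence (Theorem~\ref{whe1}) remains available despite $\Delta_{\M}$ not being simplicial. The resolution is that this equivalence is a statement about the finite $T_0$ space $\mathcal{K}(\mathring{\Delta}_{\M})$ and the order complex of its associated poset, both of which are well defined from the poset structure alone; the (non-simplicial) $q$-complex structure of $\Delta_{\M}$ is never used. This is exactly why one cannot append a fourth space $|\Delta_{\M}|$ to the list, and it foreshadows the later divergence between the $q$-matroid and the classical cases.
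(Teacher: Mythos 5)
Your proposal is correct and follows essentially the same route the paper takes: the authors also obtain the result by applying Theorem~\ref{whe1} (McCord) to the finite $T_0$ space $\mathcal{K}(\mathring{\Delta}_{\M})$, whose associated order complex is $I_{\M}$, and Theorem~\ref{whe2} to the finite simplicial complex $I_{\M}$, then chaining the weak homotopy equivalences. Your remark that the missing fourth space $|\Delta_{\M}|$ is exactly the point where the $q$-case diverges from Proposition~\ref{equival} matches the paper's intent.
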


\section{The Euler characteristic and the Möbius function} \label{classical}

In this section we prove  a well known result (Theorem \ref{main1}) for usual matroids, in two ways. The first proof is meant as a key to the understanding of the second proof. The result itself, and the second proof,  serve as a motivation for the main result of our article, which is Theorem \ref{conj:1}, and as a pattern for the proof of Theorem \ref{conj:1}, respectively.

We now quote a result by G. Rota (The original notation $q_i$ by Rota has been changed to $\lambda_i$ for each $i$) \cite[Proposition 1, p. 349]{R}:
 \begin{prop} \label{rota}
 Let $R$ be a subset of a finite lattice $L$ such that the maximum element $ 1 \textrm{ is not in }R$, and for every $x \in L,$ except $x=1,$ there is an element $y \in R$ such that $y \ge x$. For $k \ge 2$, let $\lambda_k$ be the number of subsets of $R$ containing $k$ elements whose meet is the minimum element $0$. Then $\mu(0,1)=\lambda_2 -\lambda_3+\cdots (-1)^s\lambda_s.$
 \end{prop}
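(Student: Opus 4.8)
The plan is to prove Rota's formula (Proposition \ref{rota}) by interpreting both sides as outputs of the same inclusion-exclusion / Möbius-inversion computation carried out over the lattice $L$. The key idea is that the hypotheses on $R$ — namely that $1\notin R$ but every $x\neq 1$ lies below some element of $R$ — say precisely that $R$ is a \emph{crosscut} of $L$ separating $0$ from $1$, so the statement is an instance of Rota's crosscut theorem. Concretely, I would start from the defining recursion $\mu(0,1)=-\sum_{0\le z<1}\mu(0,z)$ and, more usefully, from the summatory identity $\sum_{0\le z\le y}\mu(0,z)=\delta_{0,y}$, which characterizes the Möbius function. My goal is to rewrite $\mu(0,1)=\sum_{z\in L}\mu(0,z)\,[z=1]$ by expressing the indicator $[z=1]$ through the subset $R$.

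The main computation I would carry out is the following. For each subset $A\subseteq R$, let $\bigwedge A$ denote the meet of its elements (with $\bigwedge\emptyset=1$ by convention). I would consider the alternating sum
\begin{equation*}
\Lambda \;=\; \sum_{A\subseteq R}(-1)^{|A|}\bigl[\textstyle\bigwedge A = 0\bigr],
\end{equation*}
which, once $\emptyset$ and singletons are separated out, is exactly $\lambda_2-\lambda_3+\cdots+(-1)^s\lambda_s$ up to the sign bookkeeping in the statement (the $\emptyset$ and the singleton terms contribute meets equal to $1$ and to single elements of $R$, none of which equals $0$ under the natural nondegeneracy assumptions, so they drop out and leave exactly the claimed alternating sum of the $\lambda_k$). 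The crux is then to evaluate $\Lambda$ a second way by swapping the order of summation. For a fixed element $x\in L$ I would count the contribution of all $A\subseteq R$ whose meet equals $x$; using the hypothesis that every non-$1$ element sits below some member of $R$, the set $R_{\ge x}=\{y\in R: y\ge x\}$ is nonempty for $x\neq 1$, and $\bigwedge A\ge x$ reduces to $A\subseteq R_{\ge x}$. An alternating sum $\sum_{A\subseteq R_{\ge x}}(-1)^{|A|}$ over a nonempty finite set vanishes, so after a Möbius-inversion / inclusion-exclusion rearrangement only the top element $x=1$ survives, and I expect the second evaluation to collapse to $\mu(0,1)$.

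The step I expect to be the main obstacle is making the double counting rigorous, because the naive swap does not directly produce $\mu$: one must insert the Möbius function via the identity $\sum_{A\subseteq R:\ \bigwedge A\ge x}(-1)^{|A|}=[\,x=1\,]$ (valid exactly because $R_{\ge x}$ is empty only when $x=1$) and then apply Möbius inversion on the lattice to convert the constraint $\bigwedge A\ge x$ into $\bigwedge A=0$ via $\sum_{z\le x}\mu(0,z)$-type sums. Carefully, I would write $\lambda$-data as $\sum_{A}(-1)^{|A|}[\bigwedge A=0]=\sum_{A}(-1)^{|A|}\sum_{z\le \bigwedge A}\mu(0,z)\,[\,\cdots\,]$, interchange, and recognize the inner sum over $A$ with $\bigwedge A\ge z$ as the vanishing alternating sum unless $z=1$. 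Keeping the roles of $0$ and $1$ straight (the sum is anchored at $\mu(0,\,\cdot\,)$ while the constraint is a meet condition) is the delicate bookkeeping, and verifying the $k=0,1$ boundary terms match the stated formula is the routine but error-prone finish.
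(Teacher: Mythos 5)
The paper does not actually prove this proposition: it is quoted verbatim (with $q_i$ renamed to $\lambda_i$) from Rota \cite[Proposition 1, p.~349]{R}, so there is no in-paper argument to compare yours against. That said, your proposal is a correct and essentially complete proof, and it is the standard modern one: evaluate
$\Lambda=\sum_{A\subseteq R}(-1)^{|A|}\bigl[\bigwedge A=0\bigr]$
twice, once by grouping subsets by cardinality to get $\sum_k(-1)^k\lambda_k$, and once by expanding $[\bigwedge A=0]=\sum_{z\le\bigwedge A}\mu(0,z)$ and interchanging sums, so that the inner sum becomes $\sum_{A\subseteq R_{\ge z}}(-1)^{|A|}=[R_{\ge z}=\emptyset]=[z=1]$ by exactly the two hypotheses on $R$, leaving $\mu(0,1)$. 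Every step you outline (the reduction of $\bigwedge A\ge z$ to $A\subseteq R_{\ge z}$, the vanishing of the alternating sum over a nonempty set, the role of $1\notin R$) is sound; the "main obstacle" you anticipate is in fact already resolved by the computation you describe. The one point worth making explicit rather than waving at as a "natural nondegeneracy assumption" is that the literal statement, which starts the sum at $\lambda_2$, needs $\lambda_0=\lambda_1=0$, i.e.\ $0\ne 1$ in $L$ and $0\notin R$ (otherwise the two-element lattice with $R=\{0\}$ is a counterexample to the statement as written); this is harmless here because in the paper's application $R$ is the set of coatoms (hyperplanes) of a geometric lattice of rank at least $2$. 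For contrast, Rota's own derivation goes through his Galois-connection machinery rather than this direct double count, so your route is, if anything, more elementary and self-contained than the cited source.
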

 
 Now we state the straightforward corollary of the above result that we use in the sequel.
 \begin{cor}
 Let $M = (E, \rho)$ be a matroid of nullity at least $2$ and $L_M$ be the lattice of its cycles. Suppose $\{C_1, \ldots, C_s\}$ is the set of circuits of $M$ and we take $S = \{1, \ldots, s\}$. If $\lambda_i$ be the number of subsets $X \subseteq S$ of cardinality $i$ such that $\bigcup_{i \in X} C_i = E $, then $\mu_{L_M} = \lambda_2 - \lambda_3 + \cdots + (-1)^s \lambda_s.$   
 \end{cor}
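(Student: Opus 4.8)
The plan is to deduce the corollary from Proposition \ref{rota} by exhibiting a lattice $L$ and a subset $R$ for which Rota's hypotheses hold, and for which his numbers $\lambda_k$ coincide with the numbers $\lambda_i$ defined in the corollary via circuits. The natural candidate is the lattice $L_M$ of cycles of $M$, ordered by inclusion, whose minimum element is the empty set $0 = \emptyset$ and whose maximum element is $1 = E$ (the ground set $E$ is a cycle precisely because $M$ has positive nullity, and it is the top cycle). I take $R$ to be the set of \emph{circuits} $\{C_1,\ldots,C_s\}$, i.e., the cycles of nullity $1$, identified with the index set $S = \{1,\ldots,s\}$.

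First I would verify that $(L_M, R)$ satisfies the two hypotheses of Proposition \ref{rota}. The requirement that $1 = E \notin R$ holds because $E$ has nullity $\eta(E) = |E| - \rho(E) \geq 2$ by assumption, whereas every circuit has nullity $1$; so no circuit equals $E$. For the covering condition, I must show that every cycle $x \in L_M$ with $x \neq E$ lies below some circuit $C_i$. This is the key structural fact: since $x \neq E$ is a cycle of nullity $\geq 1$, there is at least one circuit contained in it if $x \ne \emptyset$, but Rota's condition requires $y \ge x$, not $y \le x$. Hence I should instead argue that every proper cycle is contained in some circuit, which is generally false — so the correct reading is that Rota's Proposition is being applied to the \emph{dual/order-reversed} lattice, or that one identifies the join in $L_M$ (union of cycles) with the meet in Rota's abstract lattice. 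I would therefore work in $L_M$ with \emph{join} = union and \emph{meet} = the largest cycle contained in the intersection, so that a subset of circuits has join $E$ exactly when their union is $E$; translating Rota's statement (phrased for meets equal to $0$) to $L_M^{op}$ then makes $\lambda_k$ (his count of $k$-subsets of $R$ with meet $0$) equal to the number of $k$-subsets of circuits whose join is the top element $E$, which is precisely $\bigcup_{i\in X} C_i = E$.

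With the dictionary fixed, the second step is purely bookkeeping: in $L_M^{op}$ the top element is $\emptyset$ and the bottom element is $E$, the subset $R$ of circuits satisfies ``$E \notin R$'' (dualized) and the covering hypothesis becomes ``every cycle other than $E$ contains some circuit,'' which is the standard matroid fact that every dependent set contains a circuit. Applying Proposition \ref{rota} to $L_M^{op}$ yields $\mu_{L_M^{op}}(0,1) = \lambda_2 - \lambda_3 + \cdots + (-1)^s \lambda_s$ with the $\lambda_i$ as defined in the corollary. Since the M\"obius function of a poset and its opposite satisfy $\mu_{L^{op}}(0,1) = \mu_{L}(0,1)$ between the extreme elements (the two endpoints are simply interchanged), this gives the claimed value $\mu_{L_M} = \lambda_2 - \lambda_3 + \cdots + (-1)^s \lambda_s$.

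The main obstacle is getting the order conventions exactly right: Rota's Proposition is stated with $R$ near the \emph{top} and meets equal to the \emph{bottom} $0$, whereas circuits are the \emph{minimal} nonzero cycles and the relevant operation is union (join) to the \emph{top} $E$. I expect the bulk of the care to go into confirming that the order-reversal sends Rota's hypotheses to true statements about $M$ (namely $\eta(E) \geq 2$ forcing no circuit to be the whole of $E$, and every proper cycle containing a circuit), and into justifying $\mu_{L_M} = \mu_{L_M^{op}}$ at the endpoints. Once the duality is pinned down, the identification of the two families of $\lambda_i$ and the final alternating sum follow immediately from Proposition \ref{rota}.
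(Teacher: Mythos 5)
Your argument is correct and is essentially the paper's own proof: the paper applies Proposition \ref{rota} to the lattice of flats of $M^*$ with $R$ the set of hyperplanes, which is exactly your $L_M^{op}$ with $R$ the set of circuits, and both arguments conclude via $\mu_{L}(0,1)=\mu_{L^{op}}(0,1)$. One small caution on the bookkeeping: after order reversal the \emph{top} of $L_M^{op}$ is $\emptyset$, so Rota's hypotheses translate to ``$\emptyset$ is not a circuit'' and ``every nonempty cycle contains a circuit'' (not ``every cycle other than $E$''), while the nullity-$\geq 2$ assumption is what guarantees that no circuit equals the \emph{bottom} element $E$ of $L_M^{op}$ --- the point being that no singleton of $R$ should already have meet $0$, since Rota's alternating sum starts at $\lambda_2$.
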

 
 \begin{proof}
  We apply Proposition \ref{rota} to the lattice of flats of the dual matroid $M^*$ and take $R$ to be the set of hyperplanes of $M^*$ (which correspond to circuits of $M$). It is a well known result that the lattice of flats of $M^*$ is the dual of the lattice of cycles of $M$. Also, that the meet of some hyperplanes is zero implies that the join of the corresponding circuits is the ground set $E$. Moreover, $\mu(0,1)$ is the same for a lattice and its opposite lattice. Thus we the immediately obtain that
  $$\mu(0,1) = \lambda_2 -\lambda_3+\cdots (-1)^s\lambda_s$$ for the lattice $L_M$.
 \end{proof}

\subsection{The classical case} \label{class}

\begin{thm} \label{main1}
Let $M=(E,\rho)$ be a co-loopless matroid of rank $r$. Assume that $S_M$ is the simplicial complex of independent sets of $M$ and $L_M$  the lattice of cycles of $M$. Then the reduced Euler characteristic $\chi(S_{M})$ of $S_M$ is equal to $(-1)^{r-1}|\mu_{L_M}(0,1)|$ , where $\mu_{L_M}(0,1)$ is the M{\"o}bius number of the lattice $L_M$.
\end{thm}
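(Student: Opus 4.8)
The plan is to compute $\chi(S_M)$ directly from its definition as an alternating face-count, rewrite it as an alternating sum over the \emph{dependent} sets of $M$, and then apply inclusion--exclusion over the circuits to collapse everything onto the circuit-cover numbers $\lambda_k$ appearing in the corollary to Proposition \ref{rota}.

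First I would pass from independent to dependent sets. By the definition recalled in the introduction, $\chi(S_M)=\sum_{A\in S_M}(-1)^{|A|+1}=-\sum_{A\in S_M}(-1)^{|A|}$, the empty face included. Since $E\neq\emptyset$ we have $\sum_{A\subseteq E}(-1)^{|A|}=(1-1)^{n}=0$, and splitting this sum into independent and dependent sets immediately yields
\[
\chi(S_M)=\sum_{A\subseteq E,\ A\ \text{dependent}}(-1)^{|A|}.
\]
Next comes the combinatorial heart. A set is dependent iff it contains some circuit, so with circuits $C_1,\dots,C_s$ the dependent sets are $\bigcup_i\{A:A\supseteq C_i\}$. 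The elementary identity $\sum_{A\supseteq B}(-1)^{|A|}=(-1)^{|B|}(1-1)^{|E\setminus B|}$ shows that this inner sum vanishes unless $B=E$, in which case it is $(-1)^n$. Running inclusion--exclusion on the up-sets $\{A:A\supseteq C_i\}$ with weight $(-1)^{|A|}$, each intersection contributes $\sum_{A\supseteq\bigcup_{i\in T}C_i}(-1)^{|A|}$, which survives only when $\bigcup_{i\in T}C_i=E$. Grouping the surviving terms by $|T|=k$ gives
\[
\chi(S_M)=(-1)^n\sum_{k\ge 1}(-1)^{k+1}\lambda_k,
\]
where $\lambda_k$ is exactly the number of $k$-subsets of circuits whose union is $E$.

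It remains to identify this with the M\"obius number and settle the sign. Assuming nullity at least $2$, no single circuit can equal $E$ (that would force nullity $1$, since two distinct circuits already span a nullity-$2$ subspace by submodularity), so $\lambda_1=0$ and the sum over $k\ge1$ equals $-\sum_{k\ge 2}(-1)^k\lambda_k=-\mu_{L_M}(0,1)$ by the corollary to Proposition \ref{rota}; hence $\chi(S_M)=(-1)^{n+1}\mu_{L_M}(0,1)$. To reach the absolute value I would invoke that $L_M$ is anti-isomorphic, via orthogonal complementation, to the lattice of flats of $M^*$ (already used in the proof of that corollary), which is geometric of rank $n-r$ because $M$ is co-loopless; for a geometric lattice of rank $d$ one has $(-1)^d\mu(0,1)>0$, and a lattice and its opposite share the same M\"obius number, so $\operatorname{sign}\mu_{L_M}(0,1)=(-1)^{n-r}$. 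Substituting, $\chi(S_M)=(-1)^{n+1}(-1)^{n-r}|\mu_{L_M}(0,1)|=(-1)^{r-1}|\mu_{L_M}(0,1)|$.

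The computational steps are a clean inclusion--exclusion and I expect no real friction there; the delicate part, and where I would spend the most care, is the final sign bookkeeping together with the degenerate low-nullity cases. Nullity $0$ is excluded by co-looplessness (a free matroid is all coloops), but nullity $1$ escapes the corollary: there co-looplessness forces a unique circuit $C_1=E$, so $\lambda_1=1$, $\lambda_k=0$ for $k\ge2$, and $L_M=\{0,1\}$ with $\mu=-1$; I would verify by hand that the displayed formula then gives $\chi(S_M)=(-1)^n=(-1)^{r-1}$ with $r=n-1$, matching the claim. I would therefore present this nullity-$1$ computation as a short separate case and reconcile the three competing signs ($(-1)^{n+1}$ from the face count, the intrinsic $(-1)^{n-r}$ of the M\"obius number, and the target $(-1)^{r-1}$) explicitly.
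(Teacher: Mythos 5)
Your proposal is correct and follows essentially the same route as the paper's first proof: convert $\chi(S_M)$ into an alternating sum over dependent sets, run inclusion--exclusion over the circuits, observe that each term $\sum_{A\supseteq C_X}(-1)^{|A|}$ vanishes unless $C_X=E$, and invoke Rota's formula (via the corollary to Proposition \ref{rota}), with the nullity-$1$ case treated separately. The only real divergence is the final sign: the paper reads $(-1)^{r-1}$ off from shellability (homology concentrated in degree $r-1$), whereas you use the sign-alternation $(-1)^{d}\mu(0,1)>0$ for the geometric lattice of flats of $M^*$ of rank $d=n-r$; both are standard and your arithmetic $(-1)^{n+1}(-1)^{n-r}=(-1)^{r-1}$ checks out.
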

\begin{rem}
For a different, classical, proof, see \cite[Prop. 7.4.7,(i)]{B}. This result was used in an essential way, both in \cite{JV21}, and in \cite{JPV21}.
\end{rem}
\begin{proof}

We first prove the equality of the absolute values of $\chi(S_{M})$ and $(-1)^{r-1}\mu_{L_M}(0,1)$. We will take care of the signs at the end.

Since $M$ has no co-loop, i.e., there is no element in $M$ which is contained in all the bases of $M$. Therefore, $M$ has nullity at least $1$. If the nullity is $1$, then the ground set $E$ is a circuit. Thus all the proper subsets of $E$ the are independent and hence the Euler characteristic will be $1$ or $-1$, while $\mu(0,1)$ obviously is $-1$. This proves the case of nullity $n(M)$ equal to $1$.

Now we consider the case when the nullity $n(M)$ is at least $2$. 
Let $d_k$ be the number of dependent sets of cardinality $k$, and $f_k$ the number of independent sets. The Euler characteristic is defined as
 \begin{equation}\label{Dexpr}
     \chi(S_{M})=\sum\limits_{k=0}^n (-1)^{k-1}f_k= \sum\limits_{k=0}^n (-1)^{k}d_k.
 \end{equation}

 Then $|\chi(S_{M})|=|\sum\limits_k(-1)^kd_k|$, and we have used that $\sum\limits_k(-1)^k(d_k+f_k)=0$ (since we may assume that the ground set of the matroid has at least $2$ elements).

Suppose $\{C_1,\cdots,C_s\}$ is the set of the circuits of $M$. Let $D_{k,i}$ be the set of (dependent) subsets of $E$
of cardinality $k$ containing $C_i$.
Clearly, $d_k=|\bigcup_{i=1}^{s} D_{k,i}|$. If we let $S=\{1,2,\cdots,s\}$, then by the principle of inclusion/exclusion, we get:
$$d_k=\sum\limits_{i=1}^s(-1)^{i-1}\sum_{\substack{X\subset S\\ |X|=i}}|D_{k,X}|, $$ 
where $D_{k,X}=\bigcap_{i \in X} D_{k,i}.$  But clearly 
$|D_{k,X}|$ is also the number of dependent sets of cardinality $k$ containing $C_X=\bigcup_{i \in X}C_i$.

For each $X \subset S$, there are now two cases:

\textbf{Case 1}: $C_X \ne E$ or equivalently, the join of the nodes of $L_M$ corresponding to the circuits in $X$ is not equal to $1$. 

In this case, the alternating sum $\sum\limits_{k=0}^{n}(-1)^{|E|-|C_X|}|D_{k,X}|$ is the same as the alternating sum of the number of sets containing $C_X$ and contained in $E$. Equivalently, this is the alternating sum (with a possible shift in sign) of the number of subsets of $E- C_X$. Since, $C_X \ne E$, considering the Pascal's triangle it is clear that $\sum\limits_{k=0}^{n}(-1)^{|E|-|C_X|}|D_{k,X}| = 0$.

\textbf{Case 2}: $C_X = E$ or equivalently, the join $\vee$ of the nodes of $L_M$ corresponding to the circuits in $X$ is equal to $1$ (corresponding to $E$).

In this case the "alternating sum" contains just one term
$(-1)^{|E|}.$ The contribution to $\Sigma_k(-1)^kd_k$
from the subsets of $S$ of cardinality $i$ is then
$(-1)^{|E|}(-1)^{i-1}$ for each of them, and the contribution for all of them taken together is then
$(-1)^{|E|+i-1}\lambda_i$, where $\lambda_i$ is the number of subsets $X$ of $S$ of cardinality $i$, such that the  $\vee$ of the corresponding atoms of $L$ is equal to $E$. Hence  

 $\Sigma_k(-1)^kd_k= (-1)^{|E|-1}(\lambda_2 -\lambda_3+\cdots (-1)^s\lambda_s)$, and 
 $$|\Sigma_k(-1)^kd_k|=|\lambda_2 -\lambda_3+\cdots (-1)^s\lambda_s|.$$
 
 We the immediately obtain:
  $$|\chi(S_{M})|=|\Sigma_k(-1)^kd_k|=|\lambda_2 -\lambda_3+\cdots (-1)^s\lambda_s|=|\mu(0,1)|,$$ for the lattice $L_M$.
  
 As for signs, we know that, viewed as an alternating sum of homology numbers, 
 the sign of $\chi(S_{M})$ is $(-1)^{r-1}$.
 Hence $\chi(S_{M})=(-1)^{r-1}|\mu(0,1)|.$
 \end{proof}

 \begin{rem}\label{rem:3.5}
 If $M$ has a coloop, then there is an element which is in all the bases of $M$. Therefore, the top cycle, which is the union of all the circuits, is not equal to $E$. Thus we see from the notation introduced above that all the $\lambda_i$
 are zero. So the argument above for $n(M) \ge 2$, gives that $\chi(S_{M})=0$. Then one easily sees that $\chi(S_{M})=0$ if $M$ has a coloop.
  Hence we recover the well-known result that:
 $\chi(S_{M})=   (-1)^{r(M)-1}\overline{\mu}(M)$, where 
 $$\overline{\mu}(M)= \begin{cases}|\mu(L)| &\text{ if } M \text{ has no co-loop},\\ 0 & \text{ otherwise}.
 \end{cases}
 $$
 \end{rem}

We will now give a second proof of Theorem \ref{main1}.
We will utilize Proposition \ref{equival} and compute $\chi(I_M)$ instead.
Hence we study the chain complex, which has as non-empty simplices all chains of $S_{M}$ except those that contain $\emptyset$. We allow ourselves to change notation, and we now let  $f_k$ denote the number of these chains of length $k$, for all $k$ in question. For notational simplicity we will assume that $n(M) \ge 2$. The case $n(E)=1$ can be treated in a similar way.

Furthermore we now let  $d_k$ denote the number of chains of subsets of $E$, of length $k$, which do not contain $\emptyset$, and where the top term corresponds to a dependent set for $M$. And we let $s_k$ denote the number of all chains of subsets of $E$, of length $k$, which do not contain $\emptyset$.
Clearly $s_k=d_k+f_k$, for all $k$ in question.

We now have: $\Sigma (-1)^k s_k=0$.
Hence $|\Sigma (-1)^k f_k|=|\Sigma (-1)^k d_k|$.
This is a consequence of the two first items of the following result:
\begin{lemma} \label{help1}
\begin{itemize}
\item[(1)] The contribution to $\Sigma (-1)^k s_k=0$
from the chains that contain $E$ is $(-1)^{|E|}.$
\item[(2)] The contribution to $\Sigma (-1)^k s_k=0$
from the chains that do not contain $E$ is $(-1)^{|E|-1}.$
\item[(3)] The alternating sum of the cardinalities of the sets of chains of subsets, of given length $k$, all containing $\emptyset$, but not $E$, is $(-1)^{|E|}.$
\item[(4)] The alternating sum of the cardinalities of the sets of chains of subsets of given length $k$, containing both $\emptyset$, and $E$, is $(-1)^{|E|-1}.$
\item[(5)] The alternating sum of the cardinalities of the sets of all chains of subsets of $E$ of given length $k$, is $0.$
\end{itemize}
 
\end{lemma}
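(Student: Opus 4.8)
The statement is purely about the Boolean lattice $2^E$ (no matroid structure enters), so the plan is to evaluate each of the five alternating sums directly on chains of subsets of $E$. Throughout I count the empty chain as the unique chain of length $0$, matching the convention $f_0=1$ built into the reduced Euler characteristic; with this convention each item is an alternating sum $\sum_k(-1)^k(\cdots)_k$ over the number $k$ of subsets in the chain. First I would partition all chains into four classes according to whether they contain $\emptyset$ and whether they contain $E$, writing $\Sigma_a,\Sigma_b,\Sigma_c,\Sigma_d$ for the alternating sums over chains containing, respectively, (a) neither, (b) $\emptyset$ but not $E$, (c) $E$ but not $\emptyset$, (d) both. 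Then item (2) is $\Sigma_a$, item (3) is $\Sigma_b$, item (1) is $\Sigma_c$, item (4) is $\Sigma_d$, item (5) is $\Sigma_a+\Sigma_b+\Sigma_c+\Sigma_d$, and the chains counted by $s_k$ are precisely those of types (a) and (c), so that $\sum_k(-1)^ks_k=\Sigma_a+\Sigma_c$.

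The key reduction is to the single quantity $\alpha:=\sum_C(-1)^{|C|}$, the alternating sum over all chains $C$ (including the empty one) of nonempty proper subsets of $E$, i.e.\ chains in $P:=2^E\setminus\{\emptyset,E\}$. Appending $E$ as a new top element is a bijection between chains in $P$ and type-(c) chains (which have $E$ as maximum and avoid $\emptyset$), raising the length by one; hence $\Sigma_c=-\alpha$. Prepending $\emptyset$ gives the analogous bijection onto type-(b) chains, so $\Sigma_b=-\alpha$; doing both gives $\Sigma_d=\alpha$; and $\Sigma_a=\alpha$ by definition. (The equality $\Sigma_b=\Sigma_c$ also follows from the order-reversing complementation $A\mapsto E\setminus A$, a convenient check.) Substituting, item (5) becomes $\alpha-\alpha-\alpha+\alpha=0$ and $\Sigma_a+\Sigma_c=0$, which already yields (5) and the identity $\sum_k(-1)^ks_k=0$ used just before the lemma, independently of the value of $\alpha$. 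The remaining sums are $\Sigma_c=-\alpha$, $\Sigma_a=\alpha$, $\Sigma_b=-\alpha$, $\Sigma_d=\alpha$, so everything reduces to showing $\alpha=(-1)^{n-1}$, where $n=|E|$.

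The crux is therefore the evaluation $\alpha=\alpha_n=(-1)^{n-1}$, which I would prove by induction on $n$. Decompose each nonempty chain of nonempty proper subsets of $E$ by its unique maximal element $T$ (with $\emptyset\neq T\subsetneq E$): the part of the chain strictly below $T$ is an arbitrary chain of nonempty proper subsets of $T$, an isomorphic copy of the problem for a $|T|$-element set, and removing $T$ lowers the length by one. Grouping the ${n \choose t}$ subsets $T$ of each size $t$ and adding back the empty-chain term gives the recursion
\[
\alpha_n=1-\sum_{t=1}^{n-1}{n \choose t}\,\alpha_t ,
\]
with base case $\alpha_1=1$ (as $P=\emptyset$ when $n=1$). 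Assuming $\alpha_t=(-1)^{t-1}$ for $t<n$ turns the sum into $1+\sum_{t=1}^{n-1}{n \choose t}(-1)^t$, and invoking $\sum_{t=0}^{n}{n \choose t}(-1)^t=0$ collapses this to $1+\bigl(-1-(-1)^n\bigr)=(-1)^{n-1}$, completing the induction. Reading off the four classes then delivers items (1)--(4) with the stated signs $(-1)^{|E|}$ and $(-1)^{|E|-1}$.

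I expect the only real obstacle to be pinning down this single nonzero value $\alpha=(-1)^{n-1}$: the four bijections are formal and hand us all the relations among the five sums, as well as the vanishing in (5) and in $\sum_k(-1)^ks_k$, for free, but they are sign-symmetric and cannot by themselves fix $\alpha$. That number is exactly $(-1)$ times the M{\"o}bius number $\mu_{B_n}(\emptyset,E)=(-1)^n$ of the Boolean lattice, equivalently the reduced Euler characteristic of the order complex of $P$ (a sphere $S^{n-2}$ for $n\geq2$); the inductive binomial cancellation above is the most self-contained way to produce the sign, in the same elementary spirit as the Pascal-triangle argument used in the first proof of Theorem \ref{main1}.
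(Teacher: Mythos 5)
Your proposal is correct and takes essentially the same route as the paper's own proof: both decompose chains by their maximal (top) element to run an induction on $|E|$ that closes via the alternating binomial identity $\sum_{t=0}^{n}(-1)^t{n \choose t}=0$, both use the vanishing of the total alternating sum over all chains (the contractibility of the full simplex) to relate items (1) and (2), and both obtain (3) and (4) from the prepend-$\emptyset$ and append-$E$ bijections. Your packaging of everything into the single quantity $\alpha$ with the explicit four-class partition and recursion is a cleaner and more rigorous write-up of the same argument, but it is not a different method.
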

\begin{proof}

First we show that (1) implies (2), and then we prove (1) afterwards.

One sees that the chains from cases (1) and (2) taken together describe the chain complex associated to the simplicial complex consisting of all subsimplices of a single simplex with $n$ points. The $\chi$-value of this simplex, which is the same as that of the chain complex consisting of the chains from (1) and (2) together, is zero, so the statements in (1) implies that of (2).

Statement (1) is proved by induction. For $|E|=1$, it is obvious. Moreover,  since we now have seen (in the preceeding paragraph) that the sum  $\Sigma(-1)^ks_k=0$ for the simplicial complex representing all chains not containing $\emptyset$ (but including the empty chain of such non-empty sets), it follows by induction that 
the contribution to $\Sigma (-1)^k s_k=0$
from the chains that contain $E$, being minus the sum of the contributions from the chains with top term $F$, for $F$ strictly contained in $E$, and the contribution from the empty chain, is an alternating sum of all terms in a line of Pascal's triangle, except 
an end term 1, which appears with interchanging sign
as $E$ increases.

Part (3) follows from Part(1) by symmetry.
Part (4) follows from Part (2): 
One passes from the chains in (2) to those in (4), without changing the oddity of their lengths, by simply adding $\emptyset$ and $E$ to each of them.
Part (5) then follows from Parts (1)-(4). 

(Part (5) can also be proved in the following way if $|E|=n$ is even: Then complete chains are of the type $\emptyset=V_1 \subset \cdots \subset V_{n+1}=E$, with  $n+1$ being odd. Passing from the set of all chains of length $k$ to all chains of length $n+1-k$ can then by done by taking complements within each such complete chain, and since the oddity of the length changes, it is clear that statement (5) holds).
\end{proof}

We will now count the $d_k$, or rather $\Sigma (-1)^k d_k$. Let the bad chains be the ones not containing $\emptyset$, and where the top term represents a dependent set for $M$. The number of bad chains of cardinality $k$ is then  precisely $d_k$.
Let the circuits of $M$ be $C_1,\cdots, C_s$ as before.
Now we let $d_{k,i}$ be the set of bad (strict) chains $A_1 \subset \cdots\subset A_k$ of length $k$, with $A_k$ containing $C_i$ as a (not necessarily strict) subset.

The principle of inclusion/exclusion now gives:
$$d_k=\Sigma_{i=1}^s(-1)^{i-1}\Sigma_{\{X\subset S\colon |X|=i\}}|D_{k,X}|, $$ 
where $D_{k,X}=\cap_{i \in X} D_{k,i}.$  But clearly 
$|D_{k,X}|$ is also the number of bad chains of cardinality $k$ containing the union $\cup_{i \in X} C_i$. As above we have two cases:

(1) $C_X=E$ and (2) $C_X\ne E$. 

Or equivalently: 

(1)The wedge $\vee$ of the nodes of $L_M$ corresponding to the circuits in $X$ is equal to $1$ (corresponding to $E$).
(2) This wedge is something different.

In Case (1), it follows from Lemma \ref{help1}, part (1), that $\Sigma_k(-1)^k|D_{k,X}|=(-1)^{|E|}.$

In Case (2) the contribution to $\Sigma (-1)^k d_k$ from 
$(-1)^{s-1}\Sigma_k(-1)^k|D_{k,X}|$, will then by the same logic, be a sum of $(-1)^{s-1}(-1)^{|A|}$, over each set $A$, such that $\cup_{i \in X} C_i \subset A \subset E.$ The absolute value of this sum is then  like summing a row of length at least $2$ in Pascal's triangle (each summand is +-the number of $A$ of cardinality $|E|-j$ with $\cup_{i \in X} C_i \subset A \subset E.)$ Hence the contributions to $\Sigma (-1)^k d_k$ from each $\Sigma_k(-1)^k|d_{k,X}|$ in Case (2)
is zero. Since the contributions in Case (1) are all
$(-1)^{|E|}$, we get $$|\chi(S_{M})|=|\lambda_2 -\lambda_3+\cdots (-1)^s\lambda_s|=|\mu(L)|.$$ The arguments concerning the signs are as before.

\section{The $q$-analogue case}\label{qproof}

In this section, we prove a $q$-analogue of the result of the previous section, i.e., finding an expression for the reduced Euler characteristic $\chi(I_{\M})$ of the simplicial complex of chains of a $q$-matroid $\M= (E = \Fq^n, \rho)$, excluding the independent space $\{0\}$. In this case, we find an expression in terms of the properties of the (opposite-)geometric lattice $L= L_{\M}$ of $q$-cycles of $\M$. Here $I_{\M}$ is analogous to the chain complex $C(S_{M})$ of the previous section. Following the result in the classical case as mentioned in Remark \ref{rem:3.5}, we define the following notation for the $q$-matroid $\M$ with $L$ as its lattice of $q$-cycles. 
 $$\overline{\mu}(\M):= \begin{cases}|\mu(L)| &\text{ if } \M \text{ has no co-loop},\\ 0 & \text{ otherwise}.
 \end{cases}
 $$
We also fix the following notions and notations for the rest of this section. By a chain of subspaces of length $k$, we mean a flag of subspaces of the form $\{0\} \neq V_1 \subsetneq V_2 \subsetneq \cdots \subsetneq V_k$. Thus in the chain complex $I_{\M}$, chains of length $k$ corresponds to those chains of subspaces of $\Fq^n$ of length $k$ in which $V_k$ is an independent space of $\M$. We call these chains of length $k$ of $I_{\M}$ "legitimate" and let $f_k$ be the number of legitimate chains of $I_{\M}$ of length $k$. By definition, the empty chain is only legitimate chain of length $0$, and thus $f_0 =1$. On the other hand, we let $d_k$ be the number of "illegitimate" chains of length $k$ of $I_{\M}$, i.e., chains of the form $\{0\} \neq V_1 \subsetneq V_2 \subsetneq \cdots \subsetneq V_k$, where $V_k$ is dependent. We also let $s_k$ be the number of chains of length $k$ in $E=\Fq^n$ with $V_1 \neq 0$. If $k=0$, then $s_k = 1$. 

To determine $\chi(I_{\M}) = \sum\limits_{k=0}^{r} (-1)^{k-1}f_k$, we first prove the following lemma on determining some alternating sums of number of chains in $I_{\M}$ that will be useful to prove our main result in this section.

\begin{lemma} \label{help2}
Let $ E = \Fq^n$ and $s_k$ be the number of chains of subspaces of $E$ of length $k$, i.e., the chains of the form $\{0\} \neq V_1 \subsetneq V_2 \cdots \subsetneq V_k$. Then
\begin{enumerate}
\item  The contribution to $\sum\limits_{k=0}^{n} (-1)^k s_k$
from the chains that contain $E$ is $(-1)^{n}q^{{ n \choose\ 2}}.$ 
\item  The contribution to $\Sigma (-1)^k s_k$
from the chains that do not contain $E$ is $(-1)^{n-1}q^{{ n \choose\ 2}}.$ 
\item  The alternating sum of the cardinalities of the sets of chains of subspaces, of given length $k$, all containing $\{0\}$, but not $E$, is $(-1)^{n}q^{{ n \choose\ 2}}.$
\item  The alternating sum of the cardinalities of the sets of chains of subspaces of given length $k$, containing both $\{0\}$, and $E$, is $(-1)^{n-1}q^{{ n\choose\ 2}}.$
\item  The alternating sum of the cardinalities of the sets of all chains of subspaces of $E$ of given length $k$,  is $0.$
\end{enumerate}
 \end{lemma}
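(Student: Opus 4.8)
The plan is to reduce all five statements to a single computation, namely part~(1), and to mirror the structure of the proof of Lemma~\ref{help1}, replacing the elementary ingredients of the classical argument by their $q$-analogues: set complementation by orthogonal complementation $V\mapsto V^{\perp}$, and the identity $\sum_k(-1)^k\binom{m}{k}=0$ by the $q$-binomial theorem (Proposition~\ref{q-binom}) evaluated at $t=1$.

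First I would record the elementary reductions, since all five quantities count signed chains of subspaces of $E$ subject to conditions on whether $\{0\}$ and $E$ occur. The involution that toggles $E$ in and out of a chain with $V_1\neq 0$ is fixed-point-free and changes the length by one, so $\sum_{k}(-1)^k s_k=0$; this shows that the contributions in (1) and (2) are negatives of each other, i.e.\ (1)$\Rightarrow$(2). Orthogonal complementation is an order-reversing bijection of $\Sigma(E)$ that preserves the length of a chain and interchanges $\{0\}$ and $E$; it carries the chains counted in (1) (those with $V_1\neq 0$ and top term $E$) bijectively onto those counted in (3) (those with bottom term $\{0\}$ and not containing $E$), so (3) has the same alternating sum as (1). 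Adjoining both $\{0\}$ and $E$ to a chain counted in (2) is a bijection onto the chains counted in (4) that changes the length by two, hence preserves parity, giving that (4) equals (2). Finally, every chain either contains $\{0\}$ or has $V_1\neq 0$, so the alternating sum in (5) splits as $[(1)+(2)]+[(3)+(4)]=0+0=0$. Thus everything follows once (1) is established.

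For (1), let $g(d)$ denote the alternating sum $\sum_{k\geq 1}(-1)^k\,\#\{\,W_1\subsetneq\cdots\subsetneq W_k=F : W_1\neq 0\,\}$ of chains of nonzero subspaces with top term a fixed subspace $F$ of dimension $d$; this depends only on $d$, and the quantity in (1) is exactly $g(n)$. Classifying such a chain by its largest proper term $W_{k-1}=F'$ (a nonzero proper subspace of $F$) when $k\geq 2$, treating the one-term chain $(F)$ separately, and deleting the top term $F$ gives the recursion
\begin{equation*}
g(d)=-1-\sum_{e=1}^{d-1}{d\brack e}_q\,g(e)\qquad(d\geq 1),
\end{equation*}
since there are ${d\brack e}_q$ subspaces $F'$ of dimension $e$. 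Setting $g(0):=1$, this rearranges to $\sum_{e=0}^{d}{d\brack e}_q\,g(e)=0$ for all $d\geq 1$. On the other hand, putting $t=1$ in Proposition~\ref{q-binom} gives $\sum_{e=0}^{d}(-1)^e q^{{e\choose 2}}{d\brack e}_q=\prod_{j=0}^{d-1}(1-q^{j})=0$ for $d\geq 1$, because the factor with $j=0$ vanishes. Both sequences $(g(e))_e$ and $\big((-1)^e q^{{e\choose 2}}\big)_e$ take the value $1$ at $e=0$ and satisfy the same recursion, which determines the sequence uniquely (the $d$-th equation solves for the new term, whose coefficient ${d\brack d}_q$ equals $1$); hence $g(d)=(-1)^d q^{{d\choose 2}}$ by induction on $d$. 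In particular $g(n)=(-1)^n q^{{n\choose 2}}$, which is~(1).

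The only genuinely $q$-specific obstacle, and the step I expect to require the most care, is the passage from the recursion to the closed form: identifying $\sum_{e=0}^d {d\brack e}_q g(e)=0$ with the vanishing of the $q$-binomial sum at $t=1$ is what replaces the ``a row of Pascal's triangle sums to zero'' step of the classical proof. Beyond that, the work is entirely bookkeeping of lengths and signs, including the degenerate short chains at $k\leq 1$ and the conventions $g(0)=1$ and $s_0=1$, which must line up so that the reductions of the second paragraph reproduce exactly the claimed values $(-1)^{n}q^{{n\choose 2}}$ and $(-1)^{n-1}q^{{n\choose 2}}$.
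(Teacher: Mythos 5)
Your proposal is correct and follows essentially the same route as the paper: parts (2)--(5) are reduced to part (1) by the same toggling/complementation/partition arguments (the paper invokes contractibility of the full subspace complex and ``symmetry'' where you give an explicit involution and the explicit order-reversing bijection $V\mapsto V^{\perp}$), and part (1) is established by the same induction on dimension, with your recursion $\sum_{e=0}^{d}{d\brack e}_q\,g(e)=0$ being exactly the paper's alternating sum $\sum_{i=0}^{n-1}(-1)^{i-1}q^{\binom{i}{2}}{n\brack i}_q$ resolved via the $q$-binomial theorem at $t=1$. The only differences are presentational, and your version is if anything more self-contained.
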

\begin{proof}

First we show that (1) implies (2), and then we prove (1) afterwards.

One sees that the chains from cases (1) and (2) taken together describe the chain complex associated to the $q$-complex consisting of all subspaces of a single vector space of dimension $n$. The $\chi$-value of this simplex, which is the same as that of the (contractible) chain complex consisting of the chains from (1) and (2) together, is zero, for example by \cite[Prop. 5.2 and Theorem 5.11]{GPR21}, so the statement in (1) implies that of (2).

Statement (1) is proved by induction. For $\dim E=1$, i.e., $n=1$, it is obvious that the value is $-1=(-1)^{1}q^{{ 1 \choose\ 2}}$. Moreover,  since we now have seen (in the preceeding paragraph) that the sum  $\sum\limits_{k=0}^{n}(-1)^k s_k=0$ for the simplicial complex representing chains not containing $\{0\}$ (but including the empty chain of such non-zero spaces), it follows by induction that 
the contribution to $\Sigma (-1)^k s_k=0$
from the chains that contain $E$, being minus the sum of the contributions from the chains with top term $F$, for $F$ strictly contained in $E$, and the contribution from the empty chain, is an alternating sum of type
\begin{equation}
   \Sigma_{i=0}^{n-1}(-1)^{i-1}q^{{i \choose\ 2}}{n \brack\ i}_q, 
\end{equation}
 using the induction hypothesis. Here ${ n \brack\ i}_q=|G(i,n)|$ 
for the Grassmannian $G(i,n)$ of $i$-spaces in $n$-space.
It follows from Proposition \ref{q-binom}, by substituting $t=1$,  that this alternating sum is $(-1)^nq^{{n \choose 2}}.$

Part (3) follows from Part(1) by symmetry.
Part (4) follows from Part (2): 
One passes from the chains in (2) to those in (4), without changing the oddity of their lengths, by simply adding $\{0\}$ and $E$ to each of them.
Part (5) then follows from Parts (1)-(4). 

(Part (5) can also be proved in the following way if $\dim \mathcal{E}=n$ is even: Then complete chains are of the type $\{0\}=V_1 \subset \cdots V_{n+1}=\mathcal{E}$, with  $n+1$ being odd. Passing from the set of all chains of length $k$ to all chains of length $n+1-k$ can then be done by taking complements within each such complete chain, and since the oddity of the length changes, it is clear that statement (5) holds).
\end{proof}

\begin{thm}\label{conj:1}
For a $q$-matroid $\M = (\mathcal{E}=\Fq^n, \rho)$, let $I_{\M}$ be the corresponding chain complex of non-zero independent spaces of $\M$.  
Let $\{C_i| i \in S =\{1,2,\cdots,s\}\}$ be the $q$-circuits of $\M$ and $C_X$ denotes the linear span of the circuits $C_i$ indexed by $X$. We use $\lambda_{i,{l}}$ to denote the number of subsets $X \subset S$, of cardinality $i$ such that the linear span $C_X$ of the circuits $C_i$ indexed by $X$, has codimension $l$ in $E$. Then we have
\begin{equation} \label{mainform}
\chi(I_{\M})= (-1)^{r-1}q^{{n \choose\ 2}}\overline{\mu}(\M)+\sum\limits_{l=1}^{n-1}\sum\limits_{i=1}^s\lambda_{i,l}\sum\limits_{j=0}^l(-1)^{n+i+j-1}q^{{n-j \choose\ 2}}{l \brack\ j}_q,
\end{equation}
 where ${l \brack\ j}_q$ is the cardinality of the Grassmannian $G(j,l)$ over $\mathbb{F}_q$.
\end{thm}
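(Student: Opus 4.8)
The plan is to follow the second proof of Theorem~\ref{main1} almost verbatim, replacing subsets of $E$ by subspaces of $\E=\Fq^n$ and the Pascal-triangle cancellations by their $q$-analogues recorded in Lemma~\ref{help2}. The one genuinely new feature, and the reason \eqref{mainform} carries a second summand absent in the classical statement, is that the contributions which vanished identically in the classical ``Case~2'' no longer vanish; instead they must be evaluated explicitly through the $q$-binomial theorem (Proposition~\ref{q-binom}). Throughout I assume the nullity of $\M$ is at least $2$, the cases of nullity $0$ and $1$ being disposed of directly as in the classical argument.

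First I would reduce $\chi(I_\M)$ to an alternating count of illegitimate chains. Since $s_k=d_k+f_k$ and Lemma~\ref{help2}(5) gives $\sum_k(-1)^k s_k=0$, we get $\sum_k(-1)^k f_k=-\sum_k(-1)^k d_k$, and hence
\[
\chi(I_\M)=\sum_k(-1)^{k-1}f_k=\sum_k(-1)^k d_k .
\]
To evaluate the right-hand side I would run inclusion--exclusion over the $q$-circuits $C_1,\dots,C_s$: a top term $V_k$ is dependent exactly when it contains some $C_i$, so with $D_{k,i}$ the set of length-$k$ chains whose top term contains $C_i$ and $D_{k,X}=\bigcap_{i\in X}D_{k,i}$, the chains in $D_{k,X}$ are precisely the length-$k$ chains whose top term contains $C_X=\sum_{i\in X}C_i$, and $d_k=\sum_{i=1}^s(-1)^{i-1}\sum_{|X|=i}|D_{k,X}|$.

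The heart of the proof is the evaluation of $\sum_k(-1)^k|D_{k,X}|$ for fixed $X$. I would decompose the chains of $D_{k,X}$ by their top term $W$, which ranges over all subspaces with $C_X\subseteq W\subseteq\E$. For a fixed $W$ with $\dim W=n-j$, Lemma~\ref{help2}(1) applied to $W$ in place of $\E$ shows that the alternating count over $k$ of the chains whose top term is exactly $W$ equals $(-1)^{n-j}q^{{n-j\choose 2}}$. When $C_X$ has codimension $l$, the subspaces $W$ of codimension $j$ with $C_X\subseteq W$ are the codimension-$j$ subspaces of $\E/C_X\cong\Fq^{\,l}$, numbering ${l\brack j}_q$, so that
\[
\sum_k(-1)^k|D_{k,X}|=\sum_{j=0}^{l}{l\brack j}_q(-1)^{n-j}q^{{n-j\choose 2}} .
\]
Substituting this into the inclusion--exclusion expression and sorting the subsets $X$ by their size $i$ and by $l=\codim C_X$ (there being $\lambda_{i,l}$ of each kind) writes $\chi(I_\M)$ as a triple sum over $i,l,j$. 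The terms with $l=0$, namely those $X$ with $C_X=\E$, collapse to $(-1)^n q^{{n\choose 2}}\sum_i(-1)^{i-1}\lambda_{i,0}$; by the corollary to Proposition~\ref{rota} the inner sum equals $-\mu(L)$, and since Lemma~\ref{fundamental} makes $L$ anti-isomorphic to the geometric lattice of flats of $\M^*$, of rank $n-r$, the sign of $\mu(L)$ is $(-1)^{n-r}$. A short sign manipulation then converts this block into $(-1)^{r-1}q^{{n\choose 2}}\overline{\mu}(\M)$; when $\M$ has a co-loop every $\lambda_{i,0}$ is $0$, matching $\overline{\mu}(\M)=0$. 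The terms with $l\ge 1$ are already in the required form and give exactly the double sum of \eqref{mainform}.

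I expect the main obstacle to be the crux step: whereas the classical ``Case~2'' sums telescope to zero via Pascal's triangle, their $q$-analogue is the genuinely nonzero quantity $\sum_{j}{l\brack j}_q(-1)^{n-j}q^{{n-j\choose 2}}$, and obtaining it cleanly requires organizing the decomposition over top terms and invoking Lemma~\ref{help2}(1) at the correct level of generality. The secondary delicate point is the sign bookkeeping in the last step, where the geometric-lattice sign of $\mu(L)$ (equivalently, the shellability of $I_\M$ with top homology in degree $r-1$) must be matched to the prefactor $(-1)^{r-1}$.
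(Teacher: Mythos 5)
Your proposal is correct and follows essentially the same route as the paper's own proof: the reduction to $\sum_k(-1)^k d_k$ via Lemma~\ref{help2}(5), inclusion--exclusion over the $q$-circuits, the decomposition of $D_{k,X}$ by the top term $W\supseteq C_X$ stratified by codimension $j$ with Lemma~\ref{help2}(1) giving $(-1)^{n-j}q^{\binom{n-j}{2}}$ per top term and ${l\brack j}_q$ choices of $W$, and the identification of the $l=0$ block with $(-1)^{r-1}q^{\binom{n}{2}}\overline{\mu}(\M)$ via Rota's formula. The sign bookkeeping you describe (including $(-1)^{i-1}(-1)^{n-j}=(-1)^{n+i+j-1}$ and the conversion $(-1)^{n-1}\mu(L)=(-1)^{r-1}\overline{\mu}(\M)$) matches the paper's Remark~\ref{use}, so no gap remains.
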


\begin{proof}

From Lemma \ref{help2}, part (5), we have that 
\[\chi(I_{\M}) = \sum\limits_{k=0}^{n} (-1)^{k-1}f_k = \sum\limits_{k=0}^{n} (-1)^k d_k.\]
 Now let $d_{k,i}$ be the set of illegitimate chains of length $k$ with $V_k$ containing the circuit $C_i$. If for $X \subset S$, we let $d_{k,X} = \bigcap\limits_{i\in X} d_{k,i}$, then by the principle of inclusion-exclusion, we have
 \begin{equation} \label{Dexpl}
d_k=\sum\limits_{i=1}^s(-1)^{i-1}\sum\limits_{\substack{X\subseteq S\\ |X|=i}}|d_{k,X}|. 
\end{equation}
Therefore,
\[\chi(I_{\M})=\sum\limits_{k=0}^{n} \sum\limits_{i=1}^{s} (-1)^k (-1)^{i-1} \sum\limits_{\substack{ X \subseteq S \\ |X|=i}} |d_{k,X}|.\]

But clearly 
$|d_{k,X}|$ is also the number of illegitimate chains of length $k$, where $V_k$ contains $C_X=\vee_{i \in X}C_i$.
Here the join $\vee$ in the lattice of $q$-cycles of $\M$ is given by the linear span of the vector spaces involved. Unlike the case of classical matroids in Section \ref{classical}, we will no longer separate the $X \subseteq S$ into only two different cases such as $C_X = \mathcal{E}$ or not. Rather, we are dividing into separate cases for each possible codimension for $C_X$ between zero and the maximal codimension of a $q$-circuit of $\M$.

Now we apply Lemma \ref{help2} to prove the statement of Theorem \ref{conj:1} by a straightforward computation.
Assume first that the nullity of the $q$-matroid $\eta(\M) \ge 2$.
For each $X\subset S$ with $C_X=\mathcal{E}$, the contribution to
$\sum\limits_{k=0}^{n} (-1)^k d_k$ from the chains $V_1 \subset\cdots\subset V_k$ with $C_X \subset V_k$, will be that from the chains with $V_k=\mathcal{E}$ and $V_1 \ne \{0\}$. From Lemma \ref{help2}, part (1), this is $(-1)^{n}q^{{n \choose\ 2}}.$

For each $X \subset S$ with $\dim C_X=n-l$, the contribution to $\sum\limits_{k=0}^{n} (-1)^k d_k$  from the chains $V_1 \subset\cdots\subset V_k$ with $C_X \subset V_k$, will be a sum of the contributions of those chains with  $V_k=V$ and $V_1 \ne \{0\}$, taken over all subspaces $C_X \subset V \subset\mathcal{E}$.  If $V$ has codimension $j$, for some $0 \le j \le l$, then the contribution to $\sum\limits_{k=0}^{n} (-1)^k d_k$ from the chains with $V_k=V$ is $(-1)^{n-j}q^{{n-j \choose\ 2}}.$
This follows from Lemma \ref{help2}, part (1), again.
Finally, for each $0 \le j \le l$, the number of
subspaces of $E$ containing $C_X$ of codimension $j$ in $E$ is ${l \brack\ j}_q$. This, in combination with the formula for $\mu(L_{\M})$ from \cite{R}, proves the theorem  since the $\lambda_{1,l},\cdots,\lambda_{s,l},$ for each fixed $l$, play an analogous role with respect to our usage of the inclusion-exclusion principle as 
$(\lambda_{1,0},\cdots,\lambda_{s,0})=(0,\lambda_{2},\cdots,\lambda_{s})$ do, 
for $l=0$. Here $\lambda_i$ is the number of subsets of $S=\{1,2,\cdots,s\}$ containing $i$ elements whose linear span is $E$, using the pattern from \cite[Prop. 1]{R}. (Here $\lambda_{1,0}=\lambda_1=0$, since we have assumed that the nullity $\eta(\M) \ge 2$, but $\lambda_{1,l}$ is not necessarily $0$ for $l \ge 0$. In fact it is equal to the number of $q$-circuits of codimension $l$.

Now we prove the case of $\eta (\M)=1$. In this case, there is only one circuit $C_1$ of the $q$-matroid $\M$ and let $l$ be the codimension of $C_1$ in $E$. If $\M$ is coloop-free, then $C_1 =E$ and thus $l=0$. Thus following the above argument, $\chi(I_{\M})=(-1)^{n}q^{{n \choose\ 2}}$. On the other hand, if $\M$ has coloop, then $C_1 \neq\mathcal{E}$ and thus $l \ge 1$. Thus for $l \ge 1$, we have $$\chi(I_{\M}) =  \sum\limits_{j=0}^l(-1)^{n+j}q^{{n-j \choose\ 2}}{l \brack\ j}_q.$$ Now using the fact that $\overline{\mu}(\M)=1$ or $0$ depending on whether  $\M$ is coloop-free or not, we get that 
$$\chi(I_{\M})= (-1)^{r-1}q^{{n \choose\ 2}}\overline{\mu}(\M)+\sum\limits_{j=0}^l(-1)^{n+j}q^{{n-j \choose\ 2}}{l \brack\ j}_q.$$
 For $\eta(\M)=0$, the lattice of $q$-cycles is empty. Therefore, the result doesn't give much meaning, and $\chi(I_M)=0$ then.)

\end{proof}

\begin{rem} \label{use}
{\rm The term $(-1)^{r-1}q^{{n \choose\ 2}}\overline{\mu}(\M)$ is equal to 
$(-1)^{n-1}q^{{n \choose\ 2}}{\mu}(\M)$ if $\M$ is coloop-free, (and $0$ otherwise). This is because
$(-1)^{r-1}\overline{\mu}(\M)=(-1)^{r-1}(-1)^{n(M)}\mu(L)=(-1)^{(r-1)+(n-r)}\mu(L)=(-1)^{n-1}\mu(L).$ 
In the proof below of Theorem \ref{conj:1} we will
use the form $(-1)^{n-1}q^{{n \choose\ 2}}\mu(\M).$ It should also be noted that if we let the $l$ run from $0$ to $n-1$ instead of $1$ to $n-1$ in the right part of the right side of the equation in Theorem 4.1, then it would “eat” the left part 
$(-1)^{n-1}q^{n \choose\ 2}\overline{\mu}(\M)$, so that we get a formula with a new, revised right part:
$$\chi(\M)=\Sigma_{l=0}^{n-1}\Sigma_{i=1}^s\lambda_{i,l}\Sigma_{j=0}^l(-1)^{n+i+j-1}q^{{n-j \choose\ 2}}{l \brack\ j}_q.$$ 
We have chosen to present it as (\ref{mainform}) in Theorem \ref{conj:1}, though, since we think (\ref{mainform}) illustrates the similarity with, and the difference from the case of usual matroids, in a better way.}
\end{rem}

\begin{rem} \label{extra}
{\rm  Setting $q=1$ in the formula in Theorem \ref{conj:1}, we obtain just $\overline{\mu}(\M)$ up to sign, like in Theorem \ref{main1}, since  $\Sigma_{j=0}^{l}(-1)^j{ l \brack j}_q=0$. Hence Theorem \ref{conj:1} could  be viewed as some sort of a $q$-analog of Theorem \ref{main1}.}
\end{rem}
We also have:

\begin{prop} \label{onlyonehom}
Let $\M$ be a $q$-matroid of rank $r$. Then, using reduced homology over $\mathbb{Z}$, we have:
$$h_i(I_{\M})=0\textrm{ if }i \ne r-1,$$
$$h_{r-1}(I_{\M})=\mathbb{Z}^{|\chi(I_{\M})|},$$
where $|\chi(I_{\M})|$ was given in Theorem \ref{conj:1} and Remark \ref{extra} for $\eta(\M) \ge 1$.

Moreover this is also the reduced singular homology of the 
finite set of non-zero $q$-independent spaces of $\M$, with the order topology.
\end{prop}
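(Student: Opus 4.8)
The plan is to prove the homology statement in two parts: first computing the homology of the order complex $I_{\M}$ from shellability, and second identifying this with the singular homology of the finite topological space via the weak homotopy equivalence. For the first part, I would invoke that $I_{\M}$ is a shellable simplicial complex (established in the cited theorem of \cite{P22}) of dimension $r-1$. It is a standard fact for shellable complexes that all reduced homology vanishes except in the top dimension, where the group is free abelian of rank equal to the number of \emph{homology facets}, i.e. those facets $F_j$ whose restriction $\R(F_j)$ is the entire facet $F_j$ (cf. \cite[Proposition 7.7.2]{B}). This immediately gives $h_i(I_{\M}) = 0$ for $i \neq r-1$ and that $h_{r-1}(I_{\M})$ is free, say $\mathbb{Z}^N$ for some nonnegative integer $N$. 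Crucially, since the homology is concentrated in a single degree, the reduced Euler characteristic satisfies $\chi(I_{\M}) = (-1)^{r-1} N$, so that $N = |\chi(I_{\M})|$, and the value of $\chi(I_{\M})$ was computed in Theorem \ref{conj:1}.

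The second part is to transfer this simplicial homology computation to the singular homology of the finite poset of nonzero independent spaces equipped with the order topology. Here I would appeal directly to Proposition \ref{equival2}, which asserts that $\mathcal{K}(\mathring{\Delta}_{\M})$, $|I_{\M}|$, and $\mathcal{K}(\mathring{I}_{\M})$ are weak homotopy equivalent and share the same singular homology, which equals the simplicial homology of $I_{\M}$. Weak homotopy equivalence preserves all homology groups, so the computation of $h_{\ast}(I_{\M})$ obtained from shellability transfers verbatim to the singular homology of the finite space. This settles the ``moreover'' clause.

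The main obstacle, and the only place requiring genuine care, is the first step: ensuring that the general shellability machinery for \emph{simplicial} complexes applies cleanly to $I_{\M}$. The subtlety is that $I_{\M}$ is an honest simplicial complex (its faces are chains, which form an ordinary simplicial complex), so although $\Delta_{\M}$ itself is only a $q$-complex, the order complex $I_{\M}$ lives in the classical world and the standard results of \cite{B} apply without modification. Thus I must be explicit that the shelling furnished by $\prec_\ell$ is a shelling in the ordinary simplicial sense, which is exactly what the cited theorem provides. Once this is in place, the argument is essentially a citation of \cite[Proposition 7.7.2]{B} together with the bookkeeping that a single nonzero Betti number forces $|\chi| = h_{r-1}$.

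I would organize the writeup as: (i) recall that $I_{\M}$ is shellable of dimension $r-1$; (ii) apply the shellability theorem to conclude homology is free and concentrated in degree $r-1$; (iii) use $\chi(I_{\M}) = (-1)^{r-1} h_{r-1}$ to identify the rank as $|\chi(I_{\M})|$, invoking Theorem \ref{conj:1} for its value; and (iv) invoke Proposition \ref{equival2} for the singular-homology transfer.
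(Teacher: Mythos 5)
Your proposal is correct and follows essentially the same route as the paper: shellability of $I_{\M}$ (from \cite{P22}) concentrates the reduced homology in degree $r-1$ as a free group, the alternating-sum identity then forces $h_{r-1}=|\chi(I_{\M})|$ with the value supplied by Theorem \ref{conj:1}, and the weak homotopy equivalence of Proposition \ref{equival2} transfers this to the singular homology of the finite space. Your citation of Proposition \ref{equival2} is in fact the more apt reference (the paper's proof points to the matroid version, Proposition \ref{equival}), but this is a cosmetic difference only.
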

\begin{proof}
Since one has proved (see \cite[Theorem 3.9]{P22}) that the chain complex is shellable, as it is obtained from a $q$-shellable complex, all its homology is concentrated in one term. This is for degree equal to the dimension of the simplicial complex, which is $r-1$ here,
and the rank of the top homology group is then equal to the absolute value of its Euler characteristic. 

Furthermore  the homotopy equivalence from Proposition \ref{equival} implies the last statement.
\end{proof}

\begin{example} \label{uniform}
{\rm Look at the uniform $q$-matroid $\M=U(n-2,n)$ with ground space $E=\mathbb{F}^n$.
The circuits are all subspaces of dimension $n-1$,
and the single space of nullity $2$ is $E$.
Hence, in the poset $L$, representing $q$-cycles of $\M$, we have: 
$$\overline{\mu}(\M)=\mu(L)=\mu(0,1)=-\Sigma_{x < 1}\mu(0,x)=(q^{n-1}+q^{n-2}+\cdots+1)-1=$$
\[q^{n-1}+q^{n-2}+\cdots+q,\]
since each of the $x$ representing $q$-cicuits have $\mu(0,x)=-1$, and $\mu(0,0)=1$ by convention. Here $0$ represents the $q$-cycle $\{0\}$ and $1$ represents the $q$-cycle $E$.

The only non-empty $X \subset S$ with $C_X \ne\mathcal{E}$ are the singletons in $S=\{1,2,\cdots,q^{n-1}+q^{n-2}+\cdots+1\}$, and for each of these we have $\dim C_X=n-1$. Hence $\lambda_{1,1}=q^{n-1}+q^{n-2}+\cdots+1$, and all other $\lambda_{i,r}=0$ in the formula of Theorem \ref{conj:1}.
Hence that formula gives
$$\chi(I_{\M})=$$
$$(-1)^{n-1}q^{{n \choose\ 2}}\mu(L)
+(q^{n-1}+q^{n-2}+\cdots+1)((-1)^{n}q^{{n \choose\ 2}}+(-1)^{n-1}q^{{n-1 \choose\ 2}})=$$
$$(-1)^{n}(q^{{n \choose\ 2}}-(q^{n-1}+q^{n-2}+\cdots+1)q^{{n-1 \choose\ 2}}=$$
\begin{equation} \label{tjo}
(-1)^{n-1}(q^{n-2}+\cdots+q+1)q^{{n-1 \choose\ 2}}.
\end{equation}

For $n=2$ we get $\chi(I_{\M})=-{f}_0=-1$, as predicted from (\ref{tjo}),  only counting the empty chain.

For $n=3$ we get $\chi(I_{\M})=-f_0+f_1=(q^2+q+1)-1=(q^2+q)$,
representing $q^2+q+1$ chains of length $1$ (the one-dimensional spaces) and $1$ empty chain. This is also what we get from Equation (\ref{tjo}).

For $n=4$ we get $\chi(I_{\M})=-f_0+f_1-f_2$, where $f_0=1$ represents the empty chain, $f_1=(q^3+q^2+q+1)+(q^4+q^3+2q^2+q+1)$ is the number of  chains of length $1$, so $f_1$ is the sum of the number of one- and two-dimensional spaces in $E=\mathbb{F}_q^4$. 

Furthermore $f_2=(q^3+q^2+q+1)(q^2+q+1)$ represents the incidences (by inclusion) of one- and two- 
dimensional spaces in $E=\mathbb{F}_q^4.$ The number
$-f_0+f_1-f_2$ is then equal to $-(q^5+q^4+q^3),$ which is the same as  we get when inserting $n=4$ into the formula (\ref{tjo}). }

\end{example}

\begin{rem}
{\rm The purpose of Example \ref{uniform} was not to find  a new result, but to demonstrate how Theorem \ref{conj:1} can be used to make calculations in practical examples. In \cite[Theorem 5.11]{GPR21} the more general formula $\chi(I_{\M})=q^{k(k+1)/2}{ n-1 \choose\ k}_q$ is given, for $M=U(k,n)$, for all $0 \le k \le n$, not only $k=n-2$. One easily checks that the formula in \cite{GPR21} gives the same value as the one given in Example \ref{uniform}, if $k=n-2$. The reader is encouraged to verify that Theorem \ref{conj:1} gives results coinciding with that in \cite{GPR21} for other values of $k$ than $n-2$.}
\end{rem}

\begin{example}
{\rm We study an example of a $q$-matroid with both  loops and a coloop. 
Let $P_1$ be the $q$-matroid on $\mathbb{F}_{2^2}$ of rank 1 represented by 

\[\begin{bmatrix} 
 0 & 0 &1 
 \end{bmatrix}_{\mathbb{F}_{2^2}}\]
       \vfill
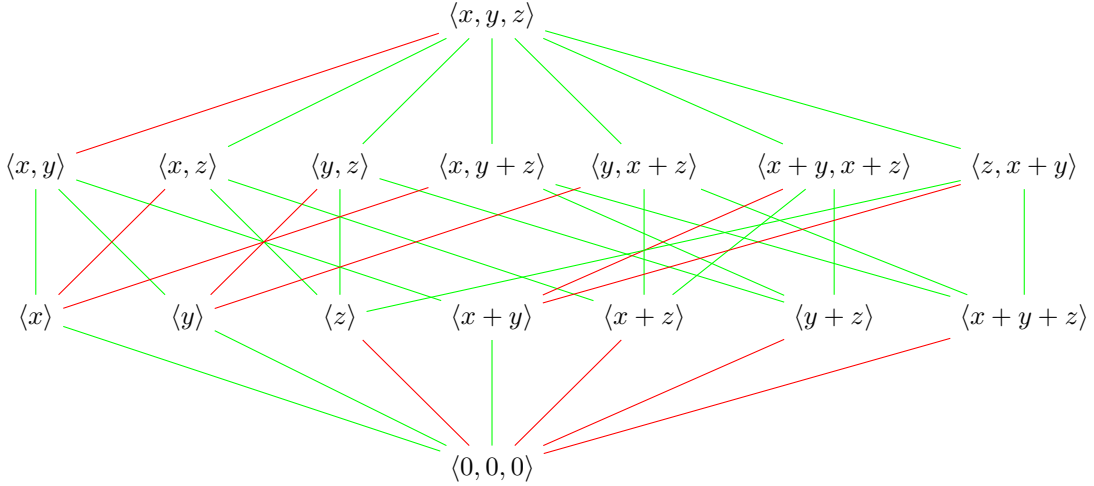
\begin{figure}[H]\label{fig:S2}
\begin{center}
\hspace*{-.3in}
\begin{small}
\begin{tikzpicture}
  \node (max) at (0,2) {$\<x,y,z\>$};
  \node (xy) at (-6,0) {$\<x,y\>$};
  \node (xz) at (-4,0) {$\<x,z\>$};
  \node (yz) at (-2,0) {$\<y,z\>$};
  \node (xyz) at (0,0) {$\<x,y+z\>$};
  \node (yxz) at (2,0) {$\<y,x+z\>$};
  \node (xyxz) at (4.5,0) {$\<x+y,x+z\>$};
  \node (zxy) at (7,0) {$\<z,x+y\>$};
  \node (x) at (-6,-2) {$\<x\>$};
  \node (y) at (-4,-2) {$\<y\>$};
  \node (z) at (-2,-2) {$\<z\>$};
  \node (XY) at (0,-2) {$\<x+y\>$};
  \node (XZ) at (2,-2) {$\<x+z\>$};
  \node (YZ) at (4.5,-2) {$\<y+z\>$};
  \node (XYZ) at (7,-2) {$\<x+y+z\>$};
  \node (min) at (0,-4) {$\<0,0,0\>$};
  \draw [green] (min) -- (x);
  \draw [green] (min) -- (y);
  \draw [red] (min) -- (z);
  \draw [green] (min) -- (XY);
  \draw [red] (min) -- (XZ);
  \draw [red] (min) -- (YZ);
  \draw [red] (min) -- (XYZ);
  \draw [red] (max) -- (xy);
  \draw [green] (max) -- (yz);
  \draw [green] (max) -- (xz);
  \draw [green] (max) -- (xyz);
  \draw [green] (max) -- (yxz);
  \draw [green](max) -- (zxy);
  \draw [green] (max) -- (xyxz); 
  \draw [green] (xy) -- (x);
  \draw [green] (xy)-- (y);
  \draw [green] (xy)-- (XY); 
  \draw [red] (xz) -- (x);
  \draw [green] (xz) -- (z);
  \draw [green] (xz) -- (XZ);
  \draw [red] (yz) -- (y);
  \draw [green] (yz) -- (z);
  \draw [green] (yz) -- (YZ);
  \draw [red] (xyz)-- (x);
  \draw [green] (xyz)-- (YZ);
  \draw [green] (xyz)-- (XYZ);
  \draw [red] (yxz)-- (y);
  \draw [green] (yxz)-- (XZ);
  \draw [green] (yxz)-- (XYZ);
  \draw [green] (xyxz)-- (XZ);
  \draw [red] (xyxz)-- (XY); 
  \draw [green] (xyxz)-- (YZ);
  \draw [green] (zxy) -- (z);
  \draw [red] (zxy) -- (XY);
  \draw [green] (zxy) -- (XYZ);
\end{tikzpicture}
\end{small}
\end{center}
\caption{Illustration of $P_1$ by bi-coloring}
\end{figure}
Here a red line segment means that the rank stays constant, and a green one means that the rank increases upwards in the diagram.
The example is case A.4.4 of \cite{CJ}. Here $\overline{\mu}=0$ because of the coloop; the top $q$-cycle, of nullity $2$, is not $E=\mathbb{F}_2^3$, but the subspace given by $Z=0$. Moreover one sees that $\lambda_{1,2}=3$ since all $3$ circuits have codimension $2$. Furthermore 
$\lambda_{2,1}=3$, since all $3$ choices of two circuits span a codimension one space. Moreover $\lambda_{3,1}=1$,
since the single choice of $3$ circuits (all of them) span the top $q$-cycle, of codimension $1$.
The  formula of Theorem \ref{conj:1} then gives:

\begin{align*}
\chi(I_{\M})&= 0 + q_{1,2}(-2^{3 \choose 2}+
2^{2 \choose 2}-2^{1 \choose 2})+
\lambda_{2,1}(2^{3 \choose 2}-2^{2 \choose 2})-\lambda_{3,1}(2^{3 \choose 2}-2^{2 \choose 2})\\
&=3(-8+6-1)+3(8-2)-1(8-2)=3.
\end{align*}
Direct inspection shows that the only independent spaces are $\{0\}$ and $4$ one-dimensional spaces.
That gives the empty chain and $4$ chains of length $1$ as the only chains of $I_{\M}$, so hence we indeed have $\chi(I_{\M}) =-1+4=3$.} 

\end{example}

\section{A characterization of nonzero Euler characteristic of the order complex}\label{sec:5}

In this section we show that, for $q$-matroids $\M$, the Euler characteristic $\chi(I_{\M})$ of $I_{\M}$ always is non-zero, except in one special case. This is in sharp contrast to the case for usual matroids $M$, where $\chi(I_{M})=0$ if and only if $M$ has a coloop, i.e., if and only if $E$ is not a cycle. The results of this section are mainly consequences of the shellability property of both $q$-matroid complexes $\M$ and their associated order complexes $I_{\M}$.

For convenience, we recall some notations from Section \ref{2.2}. We fix an arbitrary total order $\prec$ on $\Fq$ such that $0 \prec 1 \prec \alpha$  for all $\alpha \in \Fq\backslash \{0,1\}$. This extends lexicographically to a total order on $\E=\Fq^n$, 
which we also denote by~$\prec$. 
We use $\prec_q^i$ to denote the total order on the Grassmannian $G(i,n)$ for $1 \leq i \leq n$. For a $q$-matroid complex $\Delta_{\M}$ of dimension $k$, we simply use $\prec_q$, to denote the total order induced by $\prec_q^k$ on its facets as given in Definition \ref{prec}. We denote by $\prec_{\ell}$ the reverse lexicographic order on the facets (or maximal chains) of the order complex $I_{\M}$ induced by orderings $\prec_q^i$.   Recall that the total order $\prec_q$ is a shelling on $\Delta_{\M}$ and so is the total order $\prec_{\ell}$ on the order complex $I_{\M}$. 

In what follows, for any nonempty subset $S$ of $\E$, 
we denote by $\min S$ the least element of $S$ with respect to the total order $\prec$ on $\E$ defined above. For a $q$-complex $\Delta$, we use $\min \Delta$ to denote the minimum nonzero element $x$ with respect to the total order $\prec$ on $\E$ such that $\langle x \rangle \in \Delta$.

\begin{defn}\label{def:lead}\cite[Definition 3.1]{GPR21}
For any nonzero vector $u\in \Fq^n$, the \emph{leading index} of $u$, denoted as $p(u)$, is defined to be the least positive integer $i$ such that the $i$-th entry of $u$ is nonzero. Moreover, the \emph{profile} $p(S)$ of a subset $S$ of $\Fq^n$ is defined to be the set of the leading indices of all of its nonzero elements, i.e., 
\[
p(S) =  \{p(u)\colon u\in  S\backslash\{\0\} \}.
\]
Note that $p(S)$ can be the empty set if $S$ contains no nonzero vector.
\end{defn}

\begin{lemma}
Given a subspace $U$ of $\Fq^n$, let $[u_k, \ldots, u_1]^T \in \Fq^{k \times n}$ be the reduced row echelon form of $U$. Then for any nonzero element $u$ of $\Fq^n$, $p(u)$ is equal to $p(u_i)$ for some $i = 1, \ldots, k$. 
\end{lemma}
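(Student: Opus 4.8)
The plan is to prove the statement for a nonzero $u$ lying in $U$; read literally for an arbitrary $u\in\Fq^n$ the claim fails (any vector whose leading index is not a pivot column of $U$ is a counterexample), so the intended hypothesis is surely $u\in U$. The whole argument rests on two structural features of the reduced row echelon matrix $[u_k,\dots,u_1]^T$: the rows $u_1,\dots,u_k$ have pairwise distinct leading indices $p(u_1),\dots,p(u_k)$, and each $u_i$ vanishes in every coordinate strictly smaller than $p(u_i)$. Since these rows form a basis of $U$, I would start by writing an arbitrary nonzero $u\in U$ as $u=\sum_{i=1}^{k}c_i u_i$ with $c_i\in\Fq$ not all zero.

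The key step is to isolate the term of smallest leading index. Let $i_0$ be the index for which $c_{i_0}\neq 0$ and $p(u_{i_0})$ is minimal among $\{\,p(u_i):c_i\neq 0\,\}$; this is well defined because the leading indices are distinct. I would then read off two things coordinatewise. First, in column $p(u_{i_0})$ every contributing row $u_i$ with $i\neq i_0$ has $p(u_i)>p(u_{i_0})$ and so vanishes there, whence the $p(u_{i_0})$-th coordinate of $u$ equals $c_{i_0}\cdot 1=c_{i_0}\neq 0$; this gives $p(u)\leq p(u_{i_0})$. Second, for any column $c<p(u_{i_0})$, every row $u_i$ with $c_i\neq 0$ satisfies $p(u_i)\geq p(u_{i_0})>c$ and hence has a zero in coordinate $c$, so the $c$-th coordinate of $u$ is zero; this gives $p(u)\geq p(u_{i_0})$. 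Combining the two inequalities yields $p(u)=p(u_{i_0})$, which is exactly the desired conclusion with witness $i=i_0$.

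I do not expect a serious obstacle here: once the echelon structure is named, the proof is a short coordinatewise computation. The one point that genuinely needs care is the choice of $i_0$ as the minimizer of the leading index among terms with nonzero coefficient, since this is what prevents the other basis vectors from either cancelling the $c_{i_0}$ in column $p(u_{i_0})$ or contributing to some earlier column. It is worth noting that only the echelon property (distinct leading indices and vanishing before each pivot) is used, so full reducedness of the echelon form is not actually needed for this particular lemma.
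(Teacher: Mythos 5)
Your proof is correct and takes essentially the same route as the paper's, which likewise writes $u\in U$ as a linear combination of the echelon rows and observes that $p(u)$ equals the leading index of the surviving row with smallest leading index (the paper states this in one line as ``clear,'' whereas you supply the coordinatewise verification). Your remark that the hypothesis must be $u\in U$ rather than an arbitrary nonzero $u\in\Fq^n$ is also right; the statement as printed contains a typo on this point, and the paper's own proof implicitly assumes $u\in U$.
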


\begin{proof}
 It is clear that if $u = c_1 u_1 + \cdots + c_i u_i$ with $c_i \in \Fq$ and $c_i \neq 0$, then $p(u) = p(u_i)$. Hence the lemma is proved.
\end{proof}

\begin{lemma}\label{lemma:lastrow}
If $u = \min_{\prec}U \backslash \{0\}$ for a subspace $U \subseteq \Fq^n$, then $u$ will appear as the last row of reduced row echelon form of $U$. 
\end{lemma}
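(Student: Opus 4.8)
The plan is to show directly that the $\prec$-minimal nonzero vector of $U$ equals the row $u_1$, which under the indexing convention $G_U = [u_k, \ldots, u_1]^T$ is precisely the bottom (last) row of the reduced row echelon form. The whole argument rests on the interaction between the leading-index data recorded in the previous lemma and the specific structure of the order $\prec$, namely that $0 \prec 1 \prec \alpha$ for every $\alpha \in \Fq \setminus \{0,1\}$.

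First I would record the monotonicity of the lexicographic order with respect to leading indices. If $v,w$ are nonzero vectors with $p(v) > p(w)$, then $v$ vanishes in all positions $1,\ldots,p(v)-1$, in particular in position $p(w)$, while $w$ is nonzero there; moreover both vanish in positions $1,\ldots,p(w)-1$. Comparing coordinate by coordinate, the first place they differ is $p(w)$, where $v$ carries $0 \prec$ the nonzero entry of $w$, so $v \prec w$. Consequently the $\prec$-minimal nonzero element of $U$ must have the \emph{largest} possible leading index among nonzero vectors of $U$.

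Next I would identify this largest leading index and the vectors realizing it. By the preceding lemma every nonzero $u \in U$ satisfies $p(u) = p(u_i)$ for some $i$, and the echelon form forces the pivots to increase down the rows, i.e. $p(u_k) < p(u_{k-1}) < \cdots < p(u_1)$; thus $p(u_1)$ is the maximal leading index. Again by the previous lemma, if $u = c_1 u_1 + \cdots + c_i u_i$ with $c_i \neq 0$ then $p(u) = p(u_i)$, so $p(u) = p(u_1)$ happens exactly when $i = 1$, that is, the nonzero vectors of $U$ attaining the maximal leading index are precisely the nonzero scalar multiples $c\,u_1$ with $c \in \Fq \setminus \{0\}$.

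Finally I would single out the correct scalar. Since $G_U$ is in reduced row echelon form, the pivot entry of $u_1$ at position $p(u_1)$ equals $1$, so all multiples $c\,u_1$ agree (as $0$) before position $p(u_1)$ and carry the value $c$ there; comparing two such multiples in $\prec$ therefore reduces to comparing the scalars $c$ at that single coordinate. Because $1$ is the $\prec$-smallest nonzero element of $\Fq$, the minimum is attained at $c = 1$, giving $\min_{\prec}\bigl(U \setminus \{0\}\bigr) = u_1$, the last row of the echelon form. I expect the only delicate points to be fixing the direction of the inequalities $p(u_k) < \cdots < p(u_1)$ so that it is genuinely $u_1$ (and not $u_k$) that carries the largest leading index, and making explicit that the convention $0 \prec 1$ is exactly what forces the minimizing scalar to be $1$: with any other ordering in which some $\alpha \neq 1$ were the smallest nonzero element, the $\prec$-minimal vector would be the non-canonical multiple $\alpha\,u_1$, which need not be a row of the echelon form.
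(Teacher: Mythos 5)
Your proof is correct and follows essentially the same route as the paper's: both reduce the problem to showing that the $\prec$-minimal vector must have the maximal leading index $p(u_1)$ (via the preceding lemma and the strict monotonicity of the pivot positions) and then identify the vector itself. Your final step --- parametrizing the remaining candidates as the scalar multiples $c\,u_1$ and invoking $0 \prec 1 \prec \alpha$ together with the unit pivot entry to force $c=1$ --- is in fact slightly more explicit than the paper's, which instead argues by contradiction that $p(u-u_1) > p(u_1)$ and leaves the normalization of the leading coefficient implicit.
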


\begin{proof}
 Let $[u_k, u_{k-1}, \ldots, u_1]^T \in \Fq^{k \times n}$ be the reduced row echelon form of $U$. Then we know that $p(u_1) > p(u_2) > \cdots, p(u_k)$. Since $u = \min_{\prec}U \backslash \{0\}$, we have $u \prec u_1$ and thus $p(u) \geq p(u_1) > p(u_2) > \ldots > p(u_k)$. Since $u \in U$, $p(u)$ must be equal to some $p(u_i)$ for $i= 1,\ldots, k$, which implies $p(u) = p(u_1)$. But if $u \neq u_1$, then $p(u-u_1) > p(u_1) > \ldots > p(u_k)$ which is a contradiction to the fact that $p(u - u_1)$ is not equal to any $p(u_i)$ for $i=1, \ldots, k$.
\end{proof}

\begin{lemma} \label{nocommonx}
Let $\M$ be a nontrivial $q$-matroid on $\Fq^n$ of rank $k$ such that $0< k < n$. Then there is no nonzero vector $x \in \Fq^n$ such that it belongs to all the basis elements of $\M$.
\end{lemma}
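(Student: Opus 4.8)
The plan is to argue by contradiction: suppose some nonzero $x \in \Fq^n$ lies in every basis of $\M$, and derive a violation of the rank axioms. Write $k = \rho(\E)$. The whole argument rests on playing off two bounds on $\rho(W)$ for subspaces $W$ avoiding $x$. First, a \emph{universal lower bound}, valid for every $W$: choosing a complement $C$ with $\E = W \oplus C$ and applying (P3) together with (P1) gives $\rho(\E) \le \rho(W) + \rho(C) \le \rho(W) + \codim(W)$, hence $\rho(W) \ge k - \codim(W)$. Second, an \emph{upper bound available only when $x \notin W$}: if $\rho(W) = k$, then (using the equivalence of the rank and basis axiom systems recorded in the Remark) $W$ contains an independent subspace of dimension $k = \rho(\E)$, which is necessarily a basis and therefore contains $x$; contrapositively, $x \notin W$ forces $\rho(W) \le k-1$. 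This last implication is the only place where the hypothesis ``$x$ lies in every basis'' is used, and it will be bootstrapped into a codimension-graded estimate.

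The core step is to prove, by induction on $c = \codim(W)$, that
\[
\rho(W) \le k - \codim(W) \quad \text{for every } W \in \Sigma(\E) \text{ with } x \notin W.
\]
The base case $c = 1$ is exactly the upper bound above. For the inductive step with $c \ge 2$, I would count the subspaces $W'$ with $W \subsetneq W'$ and $\dim W'/W = 1$: these correspond to the ${c \brack 1}_q$ lines of $\E/W$, and exactly one of them (namely $W + \langle x \rangle$) contains $x$, so at least $q \ge 2$ of them avoid $x$. Picking two distinct such subspaces $W'_1, W'_2$, the fact that they correspond to distinct lines of $\E/W$ yields $W'_1 \cap W'_2 = W$ and $\codim(W'_1 + W'_2) = c - 2$. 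Submodularity (P3) then gives $\rho(W) = \rho(W'_1 \cap W'_2) \le \rho(W'_1) + \rho(W'_2) - \rho(W'_1 + W'_2)$; bounding the first two terms by the induction hypothesis (each $\le k - c + 1$) and the last term from below by the universal lower bound ($\ge k - (c-2)$) collapses the right-hand side to exactly $k - c$, completing the induction. Applying the claim to $W = \{0\}$, which avoids $x$ and has codimension $n$, gives $0 = \rho(\{0\}) \le k - n$, contradicting $k < n$.

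I expect the main obstacle to be the inductive step, and specifically the verification that enough intermediate subspaces $W'_i$ avoiding $x$ exist — this is precisely where the $q$-structure is indispensable and where the statement departs from ordinary matroids. In the set-theoretic world the analogue of a codimension-one enlargement of $W$ avoiding a fixed point $x$ is essentially unique (at the top level only $E \setminus \{x\}$), so one cannot produce the pair $W'_1, W'_2$ and the argument collapses; this is the combinatorial shadow of the fact that an ordinary matroid may have a coloop. Over $\Fq$, by contrast, the abundance of subspaces (at least $q$ enlargements of $W$ that avoid $x$) is exactly what drives the induction, consistent with the special role of axiom (B4) flagged earlier in the excerpt. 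A secondary point to handle carefully is the identification $W'_1 \cap W'_2 = W$ together with the codimension bookkeeping for $W'_1 + W'_2$, both of which reduce to elementary linear algebra in the quotient $\E/W$.
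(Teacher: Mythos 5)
Your proof is correct, and it takes a genuinely different route from the paper's. The paper argues by exhibiting an explicit violation of the basis-exchange axiom (B4): starting from a basis $F_1=\langle x,x_2,\dots,x_k\rangle$ it constructs two subspaces $A=\langle x_2,\dots,x_k,x_{k+1}\rangle$ and $B=\langle x_2,\dots,x_k,x_{k+1}+x\rangle$ avoiding $x$, notes that the maximal intersections $I=J=\langle x_2,\dots,x_k\rangle$ do not contain $x$ while $A+B$ does, and concludes that no maximal intersection of a basis with $A+B$ can sit inside $I+J$. Your argument instead stays entirely within the rank axioms (P1)--(P3), plus the cryptomorphism fact that $\rho(W)$ is the maximal dimension of an independent subspace of $W$, and proves the stronger quantitative statement $\rho(W)\le k-\codim(W)$ for every $W$ avoiding $x$ by induction on codimension; the contradiction $0\le k-n$ then falls out at $W=\{0\}$. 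All the steps check: the universal lower bound $\rho(W)\ge k-\codim(W)$ via a complement, the base case via "rank $k$ forces a basis inside $W$", the count of at least $q\ge 2$ codimension-$(c-1)$ enlargements of $W$ avoiding $x$, the identities $W_1'\cap W_2'=W$ and $\codim(W_1'+W_2')=c-2$, and the submodularity bookkeeping that collapses to $k-c$. The paper's proof is shorter and makes the failure of (B4) — the axiom the authors flag as the essential difference from ordinary matroids — the visible culprit; yours isolates instead the abundance of hyperplanes through $W$ missing $x$ as the place where the $q$-structure is indispensable, and correctly observes that this is exactly what breaks in the set-theoretic analogue (where a coloop can exist). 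Both diagnoses are faithful to the phenomenon, and your version has the mild advantage of not invoking (B4) at all.
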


\begin{proof}
 Let $\mathcal{B}_{\M}$ be the set of bases of $\M$. Now assume that $x \in \Fq^n\setminus \{0\}$ is such that $x \in B$ for all $B \in \mathcal{B}_{\M}$. We show that with this assumption $\mathcal{B}_{\M}$ will not satisfy the following axiom for bases of a $q$-matroid: \textit{(B4) Let $A, B \subseteq \Fq^n$ and $I, J$ be maximal intersections of some elements of $\mathcal{B}_{\M}$ with $A$ and $B$, respectively. Then there exists a maximal intersection of some element in $\mathcal{B}_{\M}$ with $A+B$ which is contained in $I+J$.}
 
 Now let $F_1 \in \mathcal{B}_{\M}$ be such that $F_1 = \< x, x_2,\ldots, x_k\>_{\Fq}$, i.e., $\{x,x_2, \ldots, x_k\}$ form a basis of $F_1$ and we extend this to a basis $\{x, x_2, \ldots, x_k, x_{k+1},\dots, x_n\}$ of $\Fq^n$. Consider the following two subspaces:
 \begin{align*}
     A&=\<x_2, \ldots, x_k, x_{k+1}\>,\\
 B&=\<x_2, \ldots, x_k, x_{k+1}+x\>.
 \end{align*}
Note that $x \notin A$ because $\{x, x_2, \ldots, x_{k+1}\}$ is a linearly independent set. Also, $x \notin B$ since otherwise, $\{x,x_2,\ldots,x_{k+1}\} \subseteq B$ which is impossible by comparing their dimensions. Since $x$ is not in both $A$ and $B$, $I:= A\cap F_1$ and
$J:=B \cap F_1$ are maximal intersections of some elements, in this case $F_1$, of $\mathcal{B}_{\M}$ with $A$ and $B$, respectively. Now $I = J = \{x_2, \ldots, x_k\}$ and thus $I+J = I$, which does not contain $x$. But $A+B$ contains $x$ and so does all the basis elements in $\mathcal{B}_{\M}$. Therefore, any maximal intersection of some element in $\mathcal{B}_{\M}$ with $A+B$ cannot be contained in $I+J$. This proves that our assumption is wrong. So there cannot be any nontrivial $q$-matroid such that all the basis elements contain a common nonzero vector in $\Fq^n$.

\end{proof}

\begin{prop}\label{isth1}
Let $\M$ be a $q$-matroid on $\mathcal{E}$ of rank $r$. If the bases of $\M$ do not share any common nonzero element among them, then $\chi(I_{\M}) \neq 0$.
\end{prop}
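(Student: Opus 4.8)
The plan is to use shellability to reduce the statement to the existence of a single homology facet, and then to construct one explicitly, with the no-common-vector hypothesis entering exactly at the top of the flag. By Proposition \ref{onlyonehom}, $|\chi(I_{\M})|$ is the rank of the unique nonzero reduced homology group $h_{r-1}(I_{\M})$, and since $\prec_\ell$ is a shelling of $I_{\M}$ this rank equals the number of \emph{homology facets}, i.e.\ facets $F$ whose restriction $\mathcal{R}(F)$ under $\prec_\ell$ is all of $F$ (see \cite{B}). Hence it suffices to exhibit one complete flag $F=(V_1\subsetneq\cdots\subsetneq V_r)$, with $V_r$ a basis, for which $\mathcal{R}(F)=F$. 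Unwinding the restriction operator, any facet containing the partial chain obtained by deleting $V_i$ from $F$ must agree with $F$ away from position $i$; therefore $V_i\in\mathcal{R}(F)$ if and only if there is a subspace $W$ with $V_{i-1}\subset W\subset V_{i+1}$ (reading $V_0=\{0\}$, and for $i=r$ reading ``$W$ a basis containing $V_{r-1}$''), $W\neq V_i$, and $W\prec_q^i V_i$. For $i<r$ the candidates $W$ range over a pencil of $q+1\ge 3$ subspaces, so a smaller replacement always exists provided $V_i$ is not $\prec_q^i$-minimal in that pencil; the only genuine constraint is to make every $V_i$ non-minimal simultaneously.

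The decisive observation, coming from Lemma \ref{lemma:lastrow}, is that the most significant coordinate of the lexicographic order $\prec_q^i$ on reduced generators is the minimal vector $\min U$ (the last row of the reduced row echelon form): thus $\min U\prec\min U'$ already forces $U\prec_q^i U'$. I would use the hypothesis through this lens. Let $B$ be the $\prec_q^r$-maximal basis and $m=\min B$. Since the bases share no common nonzero vector, $m$ lies outside some basis $\tilde B$, so $m\notin B\cap\tilde B$ and $\tilde B\neq B$. I would then pick a hyperplane $A$ of $B$ with $B\cap\tilde B\subseteq A$ and $m\notin A$ (possible precisely because $m\notin B\cap\tilde B$), and apply the exchange axiom (B3) to $B,\tilde B,A$ to obtain a line $u\subseteq\tilde B$ with $B':=A+u\in\mathcal{B}$. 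A short check ($u\not\subseteq B$, else $u\subseteq B\cap\tilde B\subseteq A$) shows $B'\neq B$ and $B\cap B'=A$, hence $m\notin B'$. Thus $A$ is a hyperplane of $B$ lying in two distinct bases $B,B'$ and avoiding $\min B$.

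I would then set $V_r=B$, $V_{r-1}=A$, and take $V_1\subsetneq\cdots\subsetneq V_{r-2}\subset A$ to be the $\prec_\ell$-maximal flag refining $A$. Position $r$ is handled because $A$ lies in two bases and $B$ is $\prec_q^r$-maximal, so $B'$ is a strictly smaller basis over $A$. Position $r-1$ is handled by $W=V_{r-2}+\langle m\rangle$: since $m\notin A\supseteq V_{r-2}$ we get $\dim W=r-1$, $W\subseteq B=V_r$, $W\neq A$, and $\min W\preceq m\prec\min A$ (the strict inequality because $m=\min B\preceq\min A$ while $m\notin A$), so $W\prec_q^{r-1}A$. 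For $i\le r-2$ the maximality of the lower flag makes each $V_i$ the $\prec_q^i$-largest element of its pencil, hence non-minimal. Therefore $\mathcal{R}(F)=F$, $F$ is a homology facet, and $\chi(I_{\M})\neq 0$; the ranks $r\le 1$ are immediate.

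I expect the main obstacle to be the coupling between the top two levels, and this is exactly where the hypothesis is used. Making $V_r$ non-minimal among bases over $V_{r-1}$ forces $V_{r-1}$ to lie in at least two bases, while making $V_{r-1}$ non-minimal in the straddling pencil $[V_{r-2},V_r]$ forces $V_r$ to ``dip below'' $V_{r-1}$, i.e.\ to contain a vector smaller than everything in $V_{r-1}$. The content of the no-common-vector assumption, together with (B3) and Lemma \ref{nocommonx}, is precisely that both demands can be met at once: one produces a hyperplane of the maximal basis $B$ that simultaneously avoids $\min B$ (forcing the dip) and is shared with a second basis. Without this assumption the construction can break down, which is consistent with the paper's remark that the classical equivalence ``$\chi=0$ iff there is a coloop'' fails for $q$-matroids.
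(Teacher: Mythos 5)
Your proof is correct, and it shares the paper's overall skeleton: both arguments reduce, via shellability of $I_{\M}$, to exhibiting a single maximal chain $F$ with $\R(F)=F$; both use the characterization of such chains through the restriction operator (the paper's Proposition \ref{PRO:4.4} and Corollary \ref{cor:4.5}, which you re-derive in the language of pencils); and both exploit Lemma \ref{lemma:lastrow}, i.e.\ that $\min U$ is the most significant coordinate of $\prec_q^k$, to turn ``does not contain the minimal vector'' into ``is not $\prec_q^k$-minimal''. Where you genuinely diverge is in how the no-common-vector hypothesis is converted into the required configuration at the top of the flag. The paper works from the global minimum $x=\min\Delta_{\M}$, takes the first facet $F_{s+1}$ of the shelling not containing $x$ together with a codimension-one neighbour $F_i$ supplied by shellability, and then -- in the problematic case $\min F_{s+1}\in F_i$ -- repairs the situation by passing to an isomorphic $q$-matroid $\phi(\M)$ via an ambient linear automorphism swapping two vectors; this forces two auxiliary lemmas (that isomorphisms preserve shellings and homology). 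You instead start from the $\prec_q^r$-maximal basis $B$, use the hypothesis to find a basis avoiding $m=\min B$, and invoke the exchange axiom (B3) to manufacture a hyperplane $A$ of $B$ with $m\notin A$ that lies in a second (hence $\prec_q^r$-smaller) basis; the lower flag is then handled by $\prec_\ell$-maximality exactly as the paper handles it by inductively avoiding minimal vectors. Your route buys a shorter, isomorphism-free argument and makes the role of the hypothesis more transparent (it is spent exactly once, to push $m$ out of some basis), at the cost of invoking (B3), which the paper's proof of this proposition does not use. One small point to make explicit: the paper states (B3) with ``$B_1\cap B_2\subset A$'', which must be read as non-strict containment (as in the standard axiom) for your application to cover the case $B\cap\tilde B=A$; with that reading, every step checks out, including the degenerate case where $A+u$ turns out to equal $\tilde B$.
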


The main idea to prove the above statement is to show that when the bases of $\M$ do not share any common nonzero element among them, then there is a maximal chain $A \in I_{\M}$ such that $\R(A) = A$. First we recall some results from \cite{P22} and prove a short lemma that will be used in the proof of the proposition.

 \begin{prop}\cite[Proposition 4.4]{P22}\label{PRO:4.4}
 Let $\Delta$ be a shellable $q$-complex on $\mathcal{E}$ of dimension $r$ and $K(\Delta)$ be the corresponding order complex. Let $F_1, \ldots, F_t$ be the shelling $\prec_q$ on $\Delta$ and $\prec_\ell$ be the shelling on the order complex $K(\Delta)$. Then for a maximal chain $A = \{A_0 < A_1 \cdots < A_r=F_j\}$ in $K(\Delta)$, $\R(A) = A$ if and only if there exists $1 \leq i <j\leq t$ such that $A_{r-1} \subseteq F_i$ and $A_k \neq \min_{\prec_q^k} \{ F : A_{k-1} < F < A_{k+1}\}$ for $1 \leq k < r$. 
 \end{prop}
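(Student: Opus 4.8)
The plan is to peel apart the condition $\R(A)=A$ one vertex at a time. Writing $A=\{A_0<A_1<\cdots<A_r=F_j\}$ with $A_0=\{0\}$ and $\dim A_m=m$, the vertices of the facet $A$, viewed as a face of $K(\Delta)$, are $A_1,\dots,A_r$. By the definition of the restriction operator, $A_k\in\R(A)$ means that $A\setminus\{A_k\}$ lies in the subcomplex generated by the facets preceding $A$ in the shelling $\prec_\ell$, i.e.\ that $A\setminus\{A_k\}$ is a face of some maximal chain $B$ with $B\prec_\ell A$. Hence $\R(A)=A$ is equivalent to the conjunction, over all $k$ with $1\le k\le r$, of the statements ``there is a facet $B\prec_\ell A$ of $K(\Delta)$ containing $A\setminus\{A_k\}$''. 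So I would reduce the whole proposition to analysing, for each fixed $k$, exactly when such a $B$ exists.

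First I would establish the rigidity that makes each of these analyses immediate. If a maximal chain $B=\{B_0<B_1<\cdots<B_r\}$ of $K(\Delta)$ contains $A\setminus\{A_k\}$ as a subchain, then, since $B$ is unrefinable (so $\dim B_m=m$) and must contain each vertex $A_m$ with $m\neq k$, matching dimensions forces $B_m=A_m$ for every $m\neq k$. Thus $B$ can differ from $A$ only in its $k$-th term $F:=B_k$, which is constrained by $A_{k-1}<F<A_{k+1}$; for $1\le k<r$ this $F$ is an intermediate $k$-space, while for $k=r$ the element $F$ ranges over the facets of $\Delta$ that contain $A_{r-1}$. Because $A$ and $B$ agree at every index other than $k$, the reverse-lexicographic rule defining $\prec_\ell$ selects $k$ as the largest index of disagreement, so $B\prec_\ell A$ holds precisely when $F\prec_q^k A_k$.

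Combining these two facts settles each case by a direct translation. For $1\le k<r$, a facet $B\prec_\ell A$ containing $A\setminus\{A_k\}$ exists iff there is a $k$-space $F$ with $A_{k-1}<F<A_{k+1}$ and $F\prec_q^k A_k$, which is exactly the statement that $A_k$ is not the $\prec_q^k$-least element of $\{F:A_{k-1}<F<A_{k+1}\}$. For $k=r$, removing the top vertex leaves $A_1<\cdots<A_{r-1}$, and the required $B$ amounts to a facet $F$ of $\Delta$ with $A_{r-1}\subseteq F$ and $F\prec_q^r F_j$; since $\prec_q=\prec_q^r$ is precisely the shelling order that lists the facets as $F_1,\dots,F_t$, such an $F$ exists iff some $F_i$ with $i<j$ satisfies $A_{r-1}\subseteq F_i$. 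Taking the conjunction over all $k$ then yields ``$\R(A)=A$ iff there is $i<j$ with $A_{r-1}\subseteq F_i$ and $A_k\neq\min_{\prec_q^k}\{F:A_{k-1}<F<A_{k+1}\}$ for all $1\le k<r$'', as desired.

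The step I expect to need the most care is the rigidity claim together with its interaction with the boundary case $k=r$. For $k<r$ one must note that the top vertex $A_r=F_j$ survives in $A\setminus\{A_k\}$, so no competing facet $B$ may alter its top term and the comparison genuinely localises at index $k$; by contrast, for $k=r$ the ``insertion set'' is not a family of intermediate subspaces but the set of facets of $\Delta$ through $A_{r-1}$, and one must verify that the order $\prec_q^r$ used to compare them coincides with the shelling order $\prec_q$ on $\Delta$, so that $F\prec_q^r F_j$ really does mean $F$ precedes $F_j$ in the list $F_1,\dots,F_t$. Once these two bookkeeping points are pinned down, every case is an immediate reading-off and no computation remains.
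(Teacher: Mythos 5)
Your proof is correct. Note that the paper itself gives no proof of this proposition---it is imported verbatim from \cite{P22}---so there is no in-paper argument to compare against; your route (unpacking $\R(A)=A$ vertex by vertex, using the dimension rigidity of unrefinable chains to force any competing facet $B\supseteq A\setminus\{A_k\}$ to agree with $A$ except at index $k$, so that the reverse-lexicographic comparison localises to $B_k\prec_q^k A_k$, and identifying the $k=r$ case with the shelling order $\prec_q=\prec_q^r$ on the facets of $\Delta$) is exactly the natural argument, and you correctly handle the two genuine bookkeeping points. Two implicit steps are worth making explicit but are immediate: the intermediate space $F$ with $A_{k-1}<F<A_{k+1}$ lies in $\Delta$ because $F\subseteq F_j$ and $q$-complexes are closed under subspaces, and the paper's definition of the restriction operator should be read with $S_{i-1}$ (the subcomplex generated by the predecessors $F_1,\ldots,F_{i-1}$) in place of $S_i$, as you tacitly and rightly did.
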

 
 \begin{cor}\cite[Corollary 4.6]{P22}\label{cor:4.5}
 Let $A = \{A_0 < A_1 \cdots < A_r\}$ be a maximal chain in the order complex $K(\Delta)$ of a $q$-complex $\Delta$ of dimension $r$. Then $A_k \neq \min_{\prec_q^k} \{ F : A_{k-1} < F < A_{k+1}\}$ for $1 \leq k < r$ if and only if $A_{k}$ does not contain the minimum nonzero vector of $A_{k+1}$ for all $1 \leq k < r$.
 \end{cor}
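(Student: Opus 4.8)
The plan is to prove that the two ``for all'' assertions in the corollary are equivalent by first recasting the right-hand condition in terms of the minimum vectors of the chain, and then isolating a single ``generic'' computation about the order $\prec_q^k$ on a pencil of subspaces. Write $m_i:=\min_{\prec}(A_i\setminus\{0\})$ for the minimum nonzero vector of each term; since $A_1\subset\cdots\subset A_r$ one has $m_1\succeq m_2\succeq\cdots\succeq m_r$. For a fixed $k$ the vector $\min_{\prec}(A_{k+1}\setminus\{0\})$ is $m_{k+1}$, and it lies in $A_k$ exactly when $m_k=m_{k+1}$: if $m_{k+1}\in A_k$ then $m_k\preceq m_{k+1}$, which together with $m_k\succeq m_{k+1}$ forces equality, and the converse is clear. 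Hence the right-hand side of the corollary is equivalent to the single assertion that the minimum vectors are \emph{strictly} decreasing, $m_1\succ m_2\succ\cdots\succ m_r$.

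The key step I would prove is the following local statement about the pencil $\mathcal{F}_k:=\{F:A_{k-1}<F<A_{k+1}\}$ of $k$-spaces between two consecutive terms (these are the $q+1$ lines of the plane $A_{k+1}/A_{k-1}$): \emph{if $w:=m_{k+1}\notin A_{k-1}$, then $F_w:=A_{k-1}+\langle w\rangle$ is the unique member of $\mathcal{F}_k$ containing $w$, and $F_w=\min_{\prec_q^k}\mathcal{F}_k$.} Uniqueness is a dimension count: $w\notin A_{k-1}$ gives $\dim F_w=k$, and any pencil member containing $w$ contains $A_{k-1}+\langle w\rangle$, hence equals it. For minimality I would use Definition \ref{prec} and Lemma \ref{lemma:lastrow}: the order $\prec_q^k$ is the lexicographic order on reduced generators, whose first coordinate is precisely the minimum nonzero vector of the space. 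Since $F_w\subseteq A_{k+1}$ contains $w=\min_{\prec}(A_{k+1}\setminus\{0\})$, we get $\min_{\prec}(F_w\setminus\{0\})=w$; every other $F\in\mathcal{F}_k$ fails to contain $w$, so its own minimum vector lies in $F$, is $\succeq w$, and cannot equal $w$, whence $\min_{\prec}(F\setminus\{0\})\succ w$. Thus $F_w$ strictly wins the first coordinate of the lexicographic comparison against every competitor, giving $F_w=\min_{\prec_q^k}\mathcal{F}_k$.

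With the pencil lemma in hand, the direction from the right-hand side to the left is quick. Assume $m_1\succ\cdots\succ m_r$ and fix $k$. Then $w=m_{k+1}\prec m_k=\min_{\prec}(A_k\setminus\{0\})$, so $w\notin A_k$, and in particular $w\notin A_{k-1}$; the pencil lemma then identifies $\min_{\prec_q^k}\mathcal{F}_k=F_w$ as the member containing $w$, while $A_k$ does not contain $w$, so $A_k\neq\min_{\prec_q^k}\mathcal{F}_k$. As $k$ was arbitrary, this is the left-hand side.

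The converse is where the genuinely global content lies, and I expect it to be the main obstacle, because the naive index-by-index equivalence \emph{fails} in the degenerate configuration $m_{k+1}\in A_{k-1}$ (there the pencil lemma does not apply, and $A_k$ can contain $m_{k+1}$ while still not being the $\prec_q^k$-minimum). I would argue by contraposition via a descent. Suppose the $m_i$ are not strictly decreasing, say $m_k=m_{k+1}=:v$, so $v\in A_k$. Let $i_0$ be the least index with $v\in A_{i_0}$; then $i_0\le k<r$ and $v\notin A_{i_0-1}$ (with $A_0=\{0\}$ when $i_0=1$). Since $v=m_j$ for all $i_0\le j\le k+1$, in particular $m_{i_0+1}=v\notin A_{i_0-1}$, so the pencil lemma applies \emph{at level $i_0$} and yields $A_{i_0}=A_{i_0-1}+\langle v\rangle=\min_{\prec_q^{i_0}}\mathcal{F}_{i_0}$, contradicting the left-hand side at index $i_0$. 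Hence the left-hand side forces the strict decrease of the $m_i$, i.e.\ the right-hand side, completing the equivalence. The delicate point throughout is exactly this descent to the first level at which a given minimum vector is introduced, which is what converts the failing per-index statement into a correct one once both conditions are quantified over all $k$.
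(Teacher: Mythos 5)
Your proof is correct, and it is worth noting that the paper itself offers no argument to compare against: the statement is quoted verbatim from \cite{P22} (Corollary 4.6 there) without proof, so your proposal has to stand on its own — and it does. The reduction of the right-hand condition to strict decrease of the minimum vectors $m_1 \succ m_2 \succ \cdots \succ m_r$ is sound, and your pencil lemma is the right engine: uniqueness of $F_w$ is the dimension count you give, and minimality follows because, by Definition \ref{prec} together with Lemma \ref{lemma:lastrow}, the first coordinate of the reduced generator of a space is exactly its $\prec$-minimum nonzero vector, so $F_w$ beats every other pencil member already in the first coordinate of the lexicographic comparison. Your observation that the naive index-by-index biconditional fails when $m_{k+1} \in A_{k-1}$ is a genuine subtlety and is exactly why both sides must be quantified over all $k$: for instance in $\mathbb{F}_2^3$ with $A_1 = \langle e_3 \rangle$ and $A_2 = \langle e_1, e_3 \rangle$ inside the full $q$-complex, at $k=2$ one has $A_2 \neq \min_{\prec_q^2}\{F : A_1 < F < \mathbb{F}_2^3\}$ (the minimum is $\langle e_2, e_3\rangle$) even though $m_3 = e_3 \in A_2$; the failure materializes instead at $k=1$, where $A_1 = \langle m_2 \rangle$ is forced to be the minimum of its pencil — precisely the level your descent to the least index $i_0$ with $v \in A_{i_0}$ locates. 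All hypotheses are verified where used ($i_0 \leq k < r$ keeps the index in range, $m_{i_0+1} = v \notin A_{i_0-1}$ by minimality of $i_0$, with $A_0 = \{0\}$ handling $i_0 = 1$), so the argument is complete.
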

 \begin{lemma}\label{intermediate}
Let $F_1, \ldots, F_t$ be the facets of a $q$-matroid $\M$ on $E = \Fq^n$ of rank $r$ and let $I_{\M}$ be the corresponding order complex. Then there exists a maximal chain $A$ in $I_{\M}$ with $\R(A) = A$ if and only if for some facet $F_j$, there exists $F_i$ with $1 \leq i < j \leq t$ such that $\dim_{\Fq} F_i \cap F_j = r-1$ and $\min F_j \backslash 0 \notin F_i$.
\end{lemma}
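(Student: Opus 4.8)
The plan is to reduce everything to the two criteria for $\R(A)=A$ already recorded in Proposition \ref{PRO:4.4} and Corollary \ref{cor:4.5}. Combining them, for a maximal chain $A=\{A_0<A_1<\cdots<A_r=F_j\}$ of $I_{\M}$ one has $\R(A)=A$ precisely when (a) there is an index $i<j$ with $A_{r-1}\subseteq F_i$, and (b) for every $1\le k<r$ the space $A_k$ fails to contain the minimum nonzero vector $\min A_{k+1}\backslash\{0\}$ of $A_{k+1}$. Writing $m(V):=\min V\backslash\{0\}$, the whole proof amounts to translating the conjunction of (a) and (b) into the stated condition on a pair of facets, so I would treat the two implications separately.

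For the forward direction I would start from such a chain $A$ and set $F_j:=A_r$. Condition (a) supplies $i<j$ with $A_{r-1}\subseteq F_i$; since $A_{r-1}\subsetneq F_j$ as well, I get $A_{r-1}\subseteq F_i\cap F_j$, and a dimension count (two distinct $r$-dimensional facets meet in dimension at most $r-1$) forces $A_{r-1}=F_i\cap F_j$ and $\dim_{\Fq}(F_i\cap F_j)=r-1$. Then the instance $k=r-1$ of condition (b) gives $m(F_j)\notin A_{r-1}=F_i\cap F_j$; combining this with $m(F_j)\in F_j$ yields $m(F_j)\notin F_i$, which is exactly $\min F_j\backslash 0\notin F_i$.

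For the converse I would run this in reverse and construct the chain. Given $F_i,F_j$ with $i<j$, $\dim_{\Fq}(F_i\cap F_j)=r-1$ and $\min F_j\backslash 0\notin F_i$, I set $A_r:=F_j$ and $A_{r-1}:=F_i\cap F_j$, and then build $A_{r-2},\dots,A_1$ from the top downward: having chosen $A_{k+1}$, I take $A_k$ to be any hyperplane of $A_{k+1}$ avoiding $m(A_{k+1})$ (the kernel of a linear functional that is nonzero on that single vector). Since every $A_k$ is a subspace of the basis $F_j$, each is independent, so $A$ is a genuine maximal chain of $I_{\M}$. I would then verify (a) directly from $A_{r-1}=F_i\cap F_j\subseteq F_i$ with $i<j$, verify the $k=r-1$ instance of (b) from $\min F_j\backslash 0\notin F_i$ together with $A_{r-1}\subseteq F_i$, and the instances $1\le k\le r-2$ from the choices just made; Proposition \ref{PRO:4.4} and Corollary \ref{cor:4.5} then give $\R(A)=A$.

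I expect both directions to be essentially bookkeeping once the criteria (a) and (b) are in place; the one point needing care is the downward flag construction, where I must keep each $A_k$ inside $F_j$ (to remain in the independent complex) while simultaneously excluding $m(A_{k+1})$ from $A_k$. The main obstacle is thus confirming that a hyperplane avoiding a prescribed nonzero vector can be selected at every step, which is immediate since all the $A_k$ are forced to live inside the single basis $F_j$ and $\dim A_{k+1}\ge 2$ throughout. The degenerate case $r=1$, where the flag below $A_{r-1}$ is empty, is handled vacuously by the same conventions.
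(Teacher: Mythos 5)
Your proof is correct and follows essentially the same route as the paper: both directions reduce to the conjunction of Proposition \ref{PRO:4.4} and Corollary \ref{cor:4.5}, and the converse is handled by the identical downward flag construction inside $F_j$. Your forward direction is in fact spelled out more carefully than the paper's one-line appeal to Proposition \ref{PRO:4.4} (the dimension count forcing $A_{r-1}=F_i\cap F_j$ and the $k=r-1$ instance of Corollary \ref{cor:4.5} are needed to extract the condition $\min F_j\backslash 0\notin F_i$), but the underlying argument is the same.
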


\begin{proof}
If $\R(A) = A$, then the existence of such $F_i$ follows immediately from Proposition \ref{PRO:4.4}.

Conversely, we assume that for some facet $F_j$ of $\M$ there is a facet $F_i$ with $1 \leq i < j \leq t$ such that $\dim_{\Fq} F_i \cap F_j = r-1$ and $\min F_j \backslash 0 \notin F_i$. Set $A_{r-1} = F_i \cap F_j$. Now, for any $s$ with $1 \leq s \leq r-2$, we inductively choose $A_s$ to be a codimension $1$ subspace of $A_{s+1}$ so that $A_s$ does not contain the minimum nonzero element of $A_{s+1}$. Then combining Proposition \ref{PRO:4.4} and Corollary  \ref{cor:4.5}, we have $\R(A) = A$.
\end{proof}

\begin{lemma}
Let $\Delta_{\M}$ be a $q$-matroid complex corresponding to the $q$-matroid $\M = (\Fq^n, \rho)$ and let $F_1, \ldots, F_t$ be a shelling on the facets of $\Delta_{\M}$. If $\phi$ is an $\Fq$-linear isomorphism of $\Fq^n$, then the set $\{\phi(F_1), \ldots, \phi(F_t)\}$ forms a basis of a $q$-matroid, which we call as $\phi(\M)$. Moreover, $\phi(F_1), \ldots, \phi(F_t)$ is a shelling on the facets of $\Delta_{\phi(\M)}$.
\end{lemma}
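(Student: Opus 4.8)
The plan is to build $\phi(\M)$ explicitly through its rank function and then transport the shelling by functoriality, using throughout that an $\Fq$-linear isomorphism is an automorphism of the lattice $\Sigma(\E)$. First I would define $\rho_\phi \colon \Sigma(\E) \to \mathbb{N}_0$ by $\rho_\phi(U) = \rho(\phi\i(U))$. Since $\phi$ is an $\Fq$-linear isomorphism, $\phi\i$ preserves dimension, inclusion, sums, and intersections of subspaces; hence each of (P1), (P2), (P3) for $\rho_\phi$ follows at once from the corresponding axiom for $\rho$, so $\phi(\M) := (\E, \rho_\phi)$ is a $q$-matroid. A subspace $U$ is independent for $\rho_\phi$ exactly when $\rho(\phi\i(U)) = \dim U = \dim \phi\i(U)$, i.e.\ exactly when $\phi\i(U)$ is independent for $\M$; thus the independent spaces of $\phi(\M)$ are precisely the images $\phi(V)$ of independent spaces $V$ of $\M$, and its bases are precisely $\phi(B)$ for $B$ a basis of $\M$. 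In particular the set of bases of $\phi(\M)$ is $\{\phi(F_1), \ldots, \phi(F_t)\}$, which is the first assertion. (One could instead verify (B1)--(B4) directly for this set, but the rank-function route avoids checking the delicate exchange axiom (B4).)

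Second, I would record that $\phi$ induces an isomorphism of $q$-complexes $\Delta_\M \to \Delta_{\phi(\M)}$. Indeed, $\phi$ is an isomorphism of $\Sigma(\E)$ carrying independent spaces to independent spaces by the previous paragraph, so it sends faces to faces, facets to facets, and maximal proper faces of a facet $F$ to maximal proper faces of $\phi(F)$ (since $\phi$ preserves codimension one). Because $\phi$ commutes with the meet $\cap$ and the join $+$ on $\Sigma(\E)$, for every $j$ we have
\[
<\phi(F_j)> \cap\, <\phi(F_1), \ldots, \phi(F_{j-1})> \;=\; \phi\bigl(<F_j> \cap <F_1, \ldots, F_{j-1}>\bigr),
\]
where $<\cdot>$ denotes the subcomplex generated by the indicated facets.

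Finally I would transport the shelling condition across this identity. By hypothesis $<F_j> \cap <F_1, \ldots, F_{j-1}>$ is generated by a nonempty set $\{G_1, \ldots, G_m\}$ of maximal proper faces of $F_j$; applying $\phi$ and using the displayed equality, the complex $<\phi(F_j)> \cap <\phi(F_1), \ldots, \phi(F_{j-1})>$ is generated by $\{\phi(G_1), \ldots, \phi(G_m)\}$, a nonempty set of maximal proper faces of $\phi(F_j)$. This is exactly the defining condition of a shelling, so $\phi(F_1), \ldots, \phi(F_t)$ is a shelling of $\Delta_{\phi(\M)}$. There is no genuine obstacle here; the only subtlety worth emphasizing is that the argument relies on the \emph{abstract}, lattice-theoretic notion of shellability. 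A general linear isomorphism $\phi$ need not respect the lexicographic order $\prec_q$ of Definition \ref{prec} (it may permute reduced generators arbitrarily), so the transported order is \emph{a} shelling but in general not the canonical order produced by Theorem \ref{q-shell}. The only care required is the bookkeeping that $\phi$ sends maximal proper faces to maximal proper faces, which is immediate from preservation of codimension.
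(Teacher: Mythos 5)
Your proof is correct and follows essentially the same transport-of-structure idea as the paper: an $\Fq$-linear isomorphism is an automorphism of the subspace lattice preserving dimension, inclusion, sum, and intersection, so both the $q$-matroid structure and the (purely lattice-theoretic) shelling condition carry over. The only cosmetic difference is that you verify the rank axioms (P1)--(P3) for the pulled-back rank function where the paper appeals directly to the basis axioms, and your explicit remark that the transported order need not coincide with the canonical $\prec_q$ shelling is a worthwhile clarification the paper only makes afterwards.
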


\begin{proof}
 Note that any $\Fq$-linear isomorphism preserves dimension and intersection of $\Fq$-spaces. Thus $\phi(F_i)$'s have same dimension as the dimension of $F_i$'s and they satisfy the basis axioms of a $q$-matroid. It proves the first part.
 
For the second statement, note that the shellability property follows only on the dimension of spaces and the inclusion of subspaces. And both these properties are preserved under $\Fq$-linear isomorphism. Thus the isomorphic copies of the facets $F_i$'s when arranged in the order of $F_i$'s will also give a shelling.
\end{proof}

We remark that the shelling $\phi(F_1), \ldots, \phi(F_t)$ on the facets of $\Delta_{\phi(\M)}$ may not be the shelling $\prec_q$ mentioned in Theorem \ref{q-shell}. 
\begin{lemma}
Two isomorphic $q$-complexes have the same homology groups when equipped with order topology with the partial order given by subspace inclusion.
\end{lemma}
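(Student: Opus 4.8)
The plan is to establish an isomorphism at the level of the underlying posets and then invoke functoriality of the order topology and of singular homology. Two isomorphic $q$-complexes $\Delta$ and $\Delta'$ come with an $\Fq$-linear isomorphism $\phi$ of the ambient space that carries $\Delta$ bijectively onto $\Delta'$. Because $\phi$ preserves dimension and the inclusion relation among subspaces, the induced map on faces is an isomorphism of the underlying posets $(\Delta,\subseteq)$ and $(\Delta',\subseteq)$. This is the only genuinely $q$-complex-specific input; everything after it is formal.

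The key steps I would carry out are as follows. First I would observe that a poset isomorphism $\varphi\colon (P,\le)\to(P',\le')$ is automatically a homeomorphism for the order topologies $\mathcal{K}(P)$ and $\mathcal{K}(P')$ of Definition \ref{ordertopo}: since the open sets are exactly the down-sets, and $\varphi$ together with $\varphi^{-1}$ both preserve the order, $\varphi$ maps down-sets to down-sets in both directions, so $\varphi$ and $\varphi^{-1}$ are continuous. A homeomorphism induces isomorphisms on all singular homology groups, which gives the claim directly. Alternatively, and perhaps more in the spirit of the preceding material, I would note that a poset isomorphism induces an isomorphism of the associated order (chain) complexes $I(P)\cong I(P')$, hence a simplicial isomorphism of $|I(P)|$ and $|I(P')|$; by Proposition \ref{equival2} (or Theorem \ref{whe1}) the spaces $\mathcal{K}(\mathring{\Delta})$ and $|I(\Delta)|$ are weak homotopy equivalent and share the same singular homology, and the same holds for $\Delta'$, so the homology groups of the two order topologies agree.

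I do not expect any serious obstacle here; the statement is essentially a formal consequence of functoriality once the poset isomorphism is in hand. The one point that requires a word of care is that the isomorphism of $q$-complexes must be shown to respect the \emph{punctured} posets used to compute nontrivial homology, i.e.\ that $\phi$ sends $\{0\}$ to $\{0\}$ and hence restricts to an isomorphism $\mathring{\Delta}\to\mathring{\Delta'}$; this is immediate since an $\Fq$-linear isomorphism fixes the zero subspace and permutes the nonzero subspaces of each fixed dimension. With that remark the proof reduces to citing the homeomorphism (or weak homotopy equivalence) above, and the homology groups of the two spaces coincide.
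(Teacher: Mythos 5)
Your proposal is correct and follows essentially the same route as the paper, which simply observes that isomorphic $q$-complexes have the same underlying poset and that the order topology (hence the homology) depends only on the poset structure. Your write-up merely fills in the formal details (down-sets map to down-sets, homeomorphisms preserve singular homology, and $\phi$ fixes the zero subspace so the punctured posets correspond), all of which the paper leaves implicit.
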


\begin{proof}
It is clear from the fact that the underlying posets of two isomorphic $q$-complexes are same and the order topology (or the homology groups) depends only on the poset structure.
 \end{proof}
\begin{proof}[Proof of Proposition \ref{isth1}]
 Because of the shellability property of the chain complex $I_{\M}$, it is enough to prove that the only nontrivial (singular) homology group of the (punctured) $q$-matroid complex $\mathring{\Delta}_\M$ has a positive rank. Equivalently, it is enough to show that there is a maximal chain $A$ in the shellable simplicial complex $I_{\M}$ such that $\R(A) = A$. We consider the characterization of such maximal chains as given in \cite[Proposition 4.4]{P22} to show that there exists at least one maximal chain $A$ with $\R(A) = A$ given the condition that there is no common nonzero element in all the bases of $\M$. 
 
 Let $F_1, \ldots, F_t$ be the facets of $\Delta_\M$ ordered according to the shelling induced by tower decomposition $\prec_q$ and 
 let $x = \min \Delta_\M$, i.e. $x$ is the minimum nonzero element w.r.t. the order $\prec$ on $\mathcal{E}$ such that $\left< x \right> \in \Delta_{\M}$.

 We know that the facets containing the minimum nonzero vector in $\Delta_\M$ belong to the beginning part of the shelling order. Since there is no common nonzero vector in all of its facets, there is a positive integer $s < t$ such that $F_1, \ldots, F_s$ contain $x (= \min \Delta_M)$, but $F_{s+1}$ does not. Now to get a legitimate maximal chain $A$ with the property $\R(A)=A$, it suffices to show that there is codimension 1 subspace $A_{r-1}$ of $F_{s+1}$, so that $A_{r-1} \subseteq F_i$ for some $i < s+1$ and $A_{r-1}$ does not contain the minimum nonzero vector of $F_{s+1}$. So we take a facet $F_i$ with $i< s+1$, such that $\dim F_i \cap F_{s+1} = r-1$. Note that shellability of $\Delta_\M$ guarantees the existence of such $F_i$. Let $y=\min  F_{s+1} \backslash F_i$. If $y$ is the minimum nonzero element of $F_{s+1}$, then we are done following Lemma \ref{intermediate}. Otherwise, take $\alpha = \min_{\prec}F_{s+1}\backslash \{0\}$. We define an isomorphism on the elements of $\Fq^n$.

First we choose a suitable basis of $\Fq^n$. Note that the first nonzero entries of $\alpha$ and $y$ are 1. Indeed, since both $\alpha$ and $y$ are minimum nonzero elements with respect to the order $\prec$ in some sets. Thus $\alpha$ and $y$ are linearly independent elements. Since $\dim ~ F_i \cap F_{s+1} = r-1$, then we can consider a basis of the form $\mathcal{B} = \{\alpha, \alpha_1, \ldots, \alpha_{r-2}\}$ of $F_i \cap F_{s+1}$. Note that $y \in F_{s+1}\backslash F_i$ implies that $\mathcal{B} \cup \{y\}$ is a basis of $F_{s+1}$. Moreover, $\mathcal{B} \cup \{x\}$ is a basis of $F_i$. Now we extend the set $\mathcal{B} \cup \{y, x\} = \{\alpha, \alpha_1, \ldots, \alpha_{r-2}, y, x\}$ to a basis $\tilde{\mathcal{B}}$ of $\Fq^n$. Consider the isomorphism $\phi \colon \Fq^n \longrightarrow \Fq^n$, that maps $\alpha$ to $y$ and $y$ to $\alpha$ and keeps other basis elements in $\tilde{\mathcal{B}}$ fixed. Now note that $\dim \phi(F_i) \cap \phi(F_{s+1})$ is unchanged under the isomorphism of $\Fq^n$ and $\phi(F_{s+1}) = F_{s+1} $. 
 As the isomorphism $\phi$ preserves dimensions of subspaces and their intersection, it is clear that $\phi(F_1), \ldots, \phi(F_t)$ are the facets of the isomorphic $q$-matroid complex, which we call as $\phi(\mathcal{M})$.
 Also, note that $\phi(x)=x$ and this implies that $x \in \phi(F_i)$. Moreover, $\{x, y, \alpha_1, \ldots, \alpha_{r-2}  \}$ is a basis of $\phi(F_i)$. 
 
We already know that $\prec_q$ is a shelling on the facets of any $q$-matroid complex (Theorem \ref{q-shell}). Here we claim that, for the $q$-matroid complex  $\phi(\mathcal{M})$, the facet $\phi(F_i)$ appears before the facet $F_{s+1}$ with respect to the shelling $\prec_q$. 
 
 \textbf{Proof of the claim: } 
 Let $x' = \min \phi(F_i)\backslash \{0\}$. Since $x \in \phi(F_i)$, $x' \prec x$. On the other hand, $x = \min \Delta_{\M}$ implies $x \prec \alpha$, where $\alpha = \min_{\prec} \phi(F_{s+1}) \backslash \{0\}$. 
 Now following Lemma \ref{lemma:lastrow}, $x'$ and $\alpha$ will appear as the last rows in the r.r.e.f. of 
 $\phi(F_i)$ and $F_{s+1}$. Hence $x' \prec \alpha$ implies that $\phi(F_i) \prec_q \phi(F_{s+1}) = F_{s+1}$. This proves our claim. 
 
 Thus if we choose $\phi(A_{r-1}) = \phi(F_i) \cap \phi(F_{s+1})$, then $\alpha \notin \phi(A_{r-1})$ since $\alpha = \phi(y) $ cannot be in $\phi(F_i)$. Now following Lemma \ref{intermediate}, we get that there exists a maximal chain $A$ ending with $\phi(A_{r-1}) \subseteq \phi(F_{s+1}$) such that $\R(A)=A$. Hence it proves that the $\chi(I_{\phi(\M}))$ is nonzero and so is $\chi(I_{\M})$. Therefore, the Euler characteristic of the chain complex of a $q$-matroid complex is indeed nonzero if there is no common non-zero vector $x$ contained in all the bases of the $q$-matroid.
\end{proof}
Combining Lemma \ref{nocommonx} and Proposition \ref{isth1}, we end this section by concluding:

\begin{cor}
If a $q$-matroid is of type $U(n,n)$, then 
$\chi(I_{\M})=0$. For all other $q$-matroids we have $\chi(I_{\M}) \ne 0$.
\end{cor}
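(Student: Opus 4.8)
The plan is to split into two cases according to whether the rank $r$ of $\M$ equals $n$ or is strictly smaller, after first observing that rank $r=n$ characterizes $U(n,n)$ among $q$-matroids on $\E=\Fq^n$. Indeed, if $\rho(\E)=n$, then for any subspace $X$ with a complement $Y$ (so $X\oplus Y=\E$, $X\cap Y=\{0\}$) the submodularity axiom (P3) together with (P1) gives $n=\rho(\E)+\rho(\{0\})\le \rho(X)+\rho(Y)\le \dim X+\dim Y=n$, which forces $\rho(X)=\dim X$ for every $X$; hence $\M=U(n,n)$. Thus the $q$-matroids other than $U(n,n)$ are exactly those of rank $r<n$, and the corollary reduces to checking $\chi(I_{\M})=0$ in the first case and $\chi(I_{\M})\ne 0$ in the second.

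For $\M=U(n,n)$ every subspace of $\E$ is independent, so every chain of nonzero subspaces is legitimate and $f_k=s_k$ for all $k$. I would then conclude
\[
\chi(I_{\M})=\sum_{k=0}^{n}(-1)^{k-1}f_k=-\sum_{k=0}^{n}(-1)^{k}s_k=0,
\]
where the last equality is precisely Lemma \ref{help2}(5). This also matches the remark made in the proof of Theorem \ref{conj:1} that $\eta(\M)=0$ forces $\chi(I_{\M})=0$.

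Now suppose $r<n$. If $0<r<n$, then Lemma \ref{nocommonx} shows that no nonzero vector of $\E$ lies in all bases of $\M$, so the hypothesis of Proposition \ref{isth1} is satisfied and $\chi(I_{\M})\ne 0$. The remaining degenerate case $r=0$ (every nonzero vector a loop) I would handle directly: the only nonzero-independent chain is the empty chain, so $f_0=1$ and $f_k=0$ for $k>0$, whence $\chi(I_{\M})=-1\ne 0$. Combining the three cases yields the statement.

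I do not expect a genuine obstacle here, since the substantive work lives in Lemma \ref{nocommonx} and Proposition \ref{isth1}; the only points requiring care are the elementary submodularity argument identifying rank $n$ with $U(n,n)$, the verification that $f_k=s_k$ for $U(n,n)$ so that Lemma \ref{help2}(5) applies, and the separate bookkeeping for the boundary case $r=0$, which falls outside the hypothesis $0<k<n$ of Lemma \ref{nocommonx} and also outside the shelling machinery used in the proof of Proposition \ref{isth1}.
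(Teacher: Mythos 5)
Your proof is correct and follows essentially the same route as the paper: the $U(n,n)$ case reduces to $d_k=0$ (equivalently $f_k=s_k$) together with Lemma \ref{help2}(5), and the remaining cases are handled by Lemma \ref{nocommonx} combined with Proposition \ref{isth1}. You additionally make explicit two points the paper leaves implicit --- that rank $n$ forces $\M=U(n,n)$ via submodularity, and the separate treatment of the rank-$0$ case, which falls outside the hypotheses of Lemma \ref{nocommonx} --- and both of these verifications are correct.
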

\begin{proof}
The statement for $\M=U(n,n)$ follows from \cite[Theorem 5.11]{GPR21}  (or from Formula (\ref{Dexpr}), since $d_k=0$ for all $k$). The statement for all other $q$-matroids follows from Lemma \ref{nocommonx} and Proposition \ref{isth1}.
\end{proof}

We end this section by showing an example of a situation which is impossible for the analogue for usual matroids: A  one-dimensional subspace of $E$ is not contained in the linear span of the $q$-cirquits, and there exists a basis, in which it is not contained. 

\begin{example}

Let $P^*_1$ be the $q$-matroid on $\mathbb{F}_{2^2}$ of rank 2 represented by 

\[\begin{bmatrix} 
1 & 0 & 0 \\
0 &\alpha &1
       \end{bmatrix}_{\mathbb{F}_{2^2}}\]
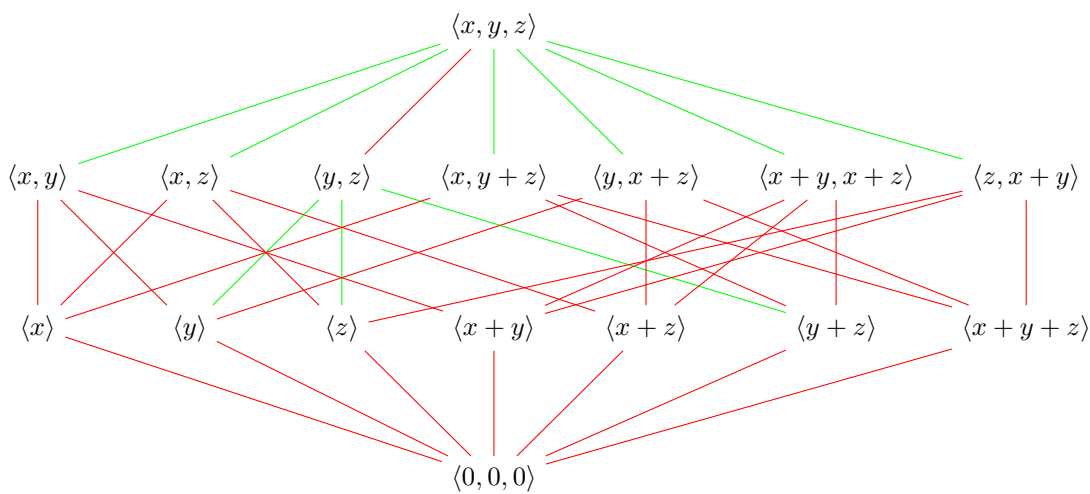
\begin{figure}[H]\label{fig:S3}
\begin{center}
\hspace*{-.3in}
\begin{small}
\begin{tikzpicture}
  \node (max) at (0,2) {$\<x,y,z\>$};
  \node (xy) at (-6,0) {$\<x,y\>$};
  \node (xz) at (-4,0) {$\<x,z\>$};
  \node (yz) at (-2,0) {$\<y,z\>$};
  \node (xyz) at (0,0) {$\<x,y+z\>$};
  \node (yxz) at (2,0) {$\<y,x+z\>$};
  \node (xyxz) at (4.5,0) {$\<x+y,x+z\>$};
  \node (zxy) at (7,0) {$\<z,x+y\>$};
  \node (x) at (-6,-2) {$\<x\>$};
  \node (y) at (-4,-2) {$\<y\>$};
  \node (z) at (-2,-2) {$\<z\>$};
  \node (XY) at (0,-2) {$\<x+y\>$};
  \node (XZ) at (2,-2) {$\<x+z\>$};
  \node (YZ) at (4.5,-2) {$\<y+z\>$};
  \node (XYZ) at (7,-2) {$\<x+y+z\>$};
  \node (min) at (0,-4) {$\<0,0,0\>$};
  \draw [red] (min) -- (x);
  \draw [red] (min) -- (y);
  \draw [red] (min) -- (z);
  \draw [red] (min) -- (XY);
  \draw [red] (min) -- (XZ);
  \draw [red] (min) -- (YZ);
  \draw [red] (min) -- (XYZ);
  \draw [green] (max) -- (xy);
  \draw [red] (max) -- (yz);
  \draw [green] (max) -- (xz);
  \draw [green] (max) -- (xyz);
  \draw [green] (max) -- (yxz);
  \draw [green](max) -- (zxy);
  \draw [green] (max) -- (xyxz); 
  \draw [red] (xy) -- (x);
  \draw [red] (xy)-- (y);
  \draw [red] (xy)-- (XY); 
  \draw [red] (xz) -- (x);
  \draw [red] (xz) -- (z);
  \draw [red] (xz) -- (XZ);
  \draw [green] (yz) -- (y);
  \draw [green] (yz) -- (z);
  \draw [green] (yz) -- (YZ);
  \draw [red] (xyz)-- (x);
  \draw [red] (xyz)-- (YZ);
  \draw [red] (xyz)-- (XYZ);
  \draw [red] (yxz)-- (y);
  \draw [red] (yxz)-- (XZ);
  \draw [red] (yxz)-- (XYZ);
  \draw [red] (xyxz)-- (XZ);
  \draw [red] (xyxz)-- (XY); 
  \draw [red] (xyxz)-- (YZ);
  \draw [red] (zxy) -- (z);
  \draw [red] (zxy) -- (XY);
  \draw [red] (zxy) -- (XYZ);
\end{tikzpicture}
\end{small}
\end{center}
\caption{Illustration of $P^*_1$ by bi-coloring}
\end{figure}

For this $q$-matroid, $<x>$ is a one-dimensional subspace of the ground space, which is not contained in the linear span of the $q$-circuits, since the only $q$-circuit of $P^*_1$ is $\langle y,z\rangle$. Note that $x$ is not contained in all the bases of $P^*_1$. Thus this example contradicts the statement that if a one-dimensional subspace of the ground set of a $q$-matroid is not contained in the linear span of the $q$-circuits, then that subspace belongs to all the basis of the $q$-matroid. The analogue of this statement, as well as its converse, is true for classical matroids.
\end{example}

\section{An attempt to describe generalized rank weights in terms of singular homology}\label{sec:6}

First, we recall the notion of support for Gabidulin rank metric codes. We fix a basis $\{a_1,\dots,a_m\}$ of the field extension $\Fqm/\Fq$. If $x=\sum_{i=1}^m \lambda_i a_i \in \Fqm$, we define the column vector $\phi(x)=(\lambda_1,\dots,\lambda_m)^T$.
\begin{defn}
For a vector $\xx=(x_1,\dots,x_n)\in \Fqm^n$, the support of $\xx$ is defined as the $\Fq$-rowspace $Supp(\xx)=Rowsp(\phi(x_1),\dots,\phi(x_n))$. More generally, if $D$ is a subspace of $\Fqm^n$, the support $supp(D)$ is the $\Fq$-subspace defined by the linear span of all the supports of the vectors in $D$.
\end{defn}

Throughout this section, whenever we talk about subcode, it is meant to be a $\Fqm$-linear subspace of $C$. Let $C$ be a Gabidulin rank-metric code where we for convenience set  $\M=\M_C^*$. Then we have the following well-known result, of which we present a proof for the benefit of the reader:
\begin{lemma} \label{support}
The supports of $\Fqm$-linear subcodes of $C$ are precisely the $q$-cycles of $\M$.
\end{lemma}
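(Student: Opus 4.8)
The plan is to reduce the statement to a single identity for the nullity function of $\M=\M_C^*$ and then read off $q$-cycles as minimal supports. First I would introduce, for every subspace $X\subseteq \E=\Fq^n$, the largest subcode supported on $X$, namely $C(X)=\{\xx\in C:\ \mathrm{Supp}(\xx)\subseteq X\}$. The elementary input is the orthogonality dictionary $\mathrm{Supp}(\xx)\subseteq X \iff \xx\perp X^{\perp}$, where $X^{\perp}$ is the $\Fq$-orthogonal complement viewed inside $\Fqm^n$ and $\perp$ is the ordinary bilinear form over $\Fqm$; this follows by writing $\mathrm{Supp}(\xx)=\{\mathrm{Tr}(\beta\xx):\beta\in\Fqm\}$ and invoking nondegeneracy of the trace form. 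Consequently $C(X)=C\cap (X^{\perp})^{\perp}$, the maps $D\mapsto \mathrm{supp}(D)$ and $X\mapsto C(X)$ are monotone and form a Galois connection ($\mathrm{supp}(D)\subseteq X\iff D\subseteq C(X)$), and the supports of subcodes are exactly the closed subspaces, i.e.\ those with $X=\mathrm{supp}(C(X))$. I would also record that $\mathrm{supp}(D)=\sum_j \mathrm{Supp}(\xx^{(j)})$ for any $\Fqm$-basis $\{\xx^{(j)}\}$ of $D$, so supports are honest $\Fq$-subspaces.

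The key step is the identity $\eta_{\M}(X)=\dim_{\Fqm}C(X)$ for all $X$, where $\eta_{\M}$ is the nullity of $\M=\M_C^*$. Starting from the standard rank function $\rho_C(Y)=\rank_{\Fqm}(GA_Y)$ of the code $q$-matroid (with $A_Y$ a matrix whose columns form an $\Fq$-basis of $Y$) and the $q$-matroid duality formula $\rho_{\M}(X)=\dim X-\rho_C(\E)+\rho_C(X^{\perp})$ from \cite{JP}, one gets $\eta_{\M}(X)=\rho_C(\E)-\rho_C(X^{\perp})$. Now $\rho_C(\E)-\rho_C(X^{\perp})$ is exactly the dimension of the left kernel of $GA_{X^{\perp}}$, i.e.\ $\dim_{\Fqm}\{v\in\Fqm^k:\ v^{T}G\perp X^{\perp}\}$; applying the orthogonality dictionary and the isomorphism $v\mapsto v^{T}G$ from $\Fqm^k$ onto $C$ identifies this with $\dim_{\Fqm}C(X)$. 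In particular $\eta_{\M}$ is monotone nondecreasing in $X$.

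With this identity in hand the two inclusions are formal. For ``support $\Rightarrow$ cycle'', let $X=\mathrm{supp}(D)$; replacing $D$ by $C(X)$ we may assume $X$ is closed. If some $X'\subsetneq X$ had $\eta_{\M}(X')=\eta_{\M}(X)$, then $C(X')\subseteq C(X)$ with equal $\Fqm$-dimension forces $C(X')=C(X)$, whence $X=\mathrm{supp}(C(X))=\mathrm{supp}(C(X'))\subseteq X'$, a contradiction; thus $X$ is minimal in $N_{\eta_{\M}(X)}$, i.e.\ a $q$-cycle. For ``cycle $\Rightarrow$ support'', let $X$ be minimal in $N_i$ with $i=\eta_{\M}(X)$, put $D=C(X)$ and $X_0=\mathrm{supp}(D)\subseteq X$; then $C(X_0)=C(X)$ gives $\eta_{\M}(X_0)=i$, so $X_0\in N_i$ and minimality forces $X_0=X=\mathrm{supp}(D)$.

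The main obstacle is the key identity $\eta_{\M}(X)=\dim_{\Fqm}C(X)$: it is the only place where one must pass carefully between the $\Fq$-structure of the ground space $\E$ and the $\Fqm$-structure of the code, and between the abstract $q$-matroid dual and the concrete left-kernel computation. Once it is established, the correspondence between $q$-cycles and subcode supports is a purely order-theoretic consequence of minimality together with the monotone Galois connection $D\mapsto\mathrm{supp}(D)$, $X\mapsto C(X)$; I do not expect any difficulty there.
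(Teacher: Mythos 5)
Your proof is correct and takes essentially the same route as the paper's: both arguments rest on the fact that $\eta_{\M}(X)$ equals the $\Fqm$-dimension of the largest subcode supported in $X$ (which the paper invokes as definitional while you derive it explicitly from the duality formula for the rank function and the orthogonality dictionary), and both then conclude by the same minimality argument in each direction. No gaps.
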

\begin{proof}
Let $U$ be an $\Fq$-subspace of $E$ which is a $q$-cycle with nullity $s$. By definition, there is a subcode $C_s$ of $C$ of dimension $s$, but no subcode of dimension $s+1$, with support contained in $U$. That $U$ is a $q$-cycle of nullity  $s$, i.e. a minimal subspace of 
nullity $s$, means that the support of $C_s$ not is
strictly contained in $U$. Thus $U$ is support of a subcode and hence $q$-cycles are supports of subcodes. 

Conversely, we can assume that $U$ is the support of some subcode $D'$. Let $D$ be the subcode
of $C$ of all codewords supported on $U$, and that implies $D' \subseteq D$. The nullity of $U$ is then $s=\dim D$. That $U$ is not a $q$-cycle then means that $D$ has a strictly smaller subspace $ V \subset U$ as its support. If $D$ indeed had that, then the support of $D'$ would be contained in $V$. But the support of $D'$ was $U$, so that is impossible. Hence $U$ is a $q$-cycle of nullity $s$. Hence supports of subcodes are $q$-cycles.

\end{proof}

For the rank weights $d_r$ of a Gabidulin rank-metric code $C$ we have, by definition:

\begin{defn}
$$d_r=\min \{\dim_{\Fq}\textrm{T | T \text{ is the support of $\Fqm$-linear subcode of C of dimension }} r\}.$$
\end{defn}
From Lemma \ref{support} and its proof we then see:
$$d_r=\min \{\dim\textrm{T | T is a q-cycle of }  \M \textrm{ of nullity } r\}.$$

\begin{rem}
So far all the statements in this section are completely analogous to the corresponding results for usual Hamming codes and their associated matroids.
To put the results in the preceding sections in context, we now sum up some results that are valid for Hamming metric codes $C$, and later we will compare with Gabidulin rank metric codes. 
\end{rem}
We let  $M$ be the matroid corresponding to parity check matrices of $C$.
For this, and any matroid $M$, let $S_M$ be the simplicial complex of the independent sets of $M$, and let $I_M=\K(\mathring{S}_M)$ be the 
simplicial complex whose simplices are chains of non-empty simplices of $S_M$ in addition to the empty chain. We then have:
\begin{prop} \label{qualitative}
For $X \subset E$,  and rank $r(X)=r$, then $X$ is a cycle of $M$ with nullity function $n$ if and only if
\begin{itemize}
\item $\overline{\mu}(M|_{X})=(-1)^{n(X)}\mu(X) \ne 0$ 
\item $\chi(S_{M|_{X}}) \ne 0$
\item $\chi(I_{M|_{X}}) \ne 0$
\item The simplicial homology   $h_i(S_{M|_{X}}) \ne 0$ for some $i$
(and the single $i$ for which it is non-zero, is $r(X)-1$).
\item The simplicial homology   $h_i(I_{M|_{X}}) \ne 0$ for some  $i$ (and the single $i$ for which it is non-zero, is $r(X)-1$).
\item $\beta_{j,X} \ne 0$ for some $j$ (and the unique such $j$ will be $n(X)$), for the $\mathbb{N}^n$-graded  Betti-numbers of the Stanley-Reisner ring of $S_M$ (where $n=|E|)$.
\item $M|_{X}$ has a coloop, in the sense of containing a point $x$ where its dual rank function
is zero.
\item $M|_{X}$ has an isthmus in the sense of a point $x$ contained in all its bases.
\item the restricted code $C_X$ is non-degenerate.
\end{itemize}
\end{prop}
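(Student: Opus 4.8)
The plan is to funnel every item through the single pivotal property that the restriction $M|_X$ has no coloop (equivalently, no isthmus), which is exactly what it means for $X$ to be a cycle of $M$, i.e. a union of circuits. First I would record this matroid-theoretic equivalence: an element $x\in X$ is a coloop of $M|_X$ precisely when $r(X\setminus x)=r(X)-1$, i.e. when the dual rank function of $M|_X$ vanishes at $x$; if such an $x$ existed then $X\setminus x$ would have the same nullity as $X$, so $X$ could not be a (minimal, coloop-free) cycle, while conversely a coloop-free $X$ is a union of circuits. This already disposes of the two items phrased in terms of a coloop and of an isthmus of $M|_X$, since these are two names for the same notion.

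With this pivot fixed, each remaining item is shown equivalent to $M|_X$ being coloop-free by specializing the machinery the paper has already set up to the matroid $M|_X$. (i) Remark \ref{rem:3.5} (i.e.\ Theorem \ref{main1}) applied to $M|_X$ yields $\chi(S_{M|_X})=(-1)^{r-1}\overline{\mu}(M|_X)$, so $\chi(S_{M|_X})\neq 0$ iff $\overline{\mu}(M|_X)\neq 0$, settling the $\overline{\mu}$ and $\chi(S)$ items together. (ii) As a matroid complex, $S_{M|_X}$ is shellable, so its reduced homology is concentrated in the top degree $r(X)-1$ with free rank $|\chi(S_{M|_X})|$; hence some $h_i(S_{M|_X})\neq 0$ iff $\chi(S_{M|_X})\neq 0$, and the only such $i$ is $r(X)-1$. (iii) By Proposition \ref{equival}, $I_{M|_X}$ and $S_{M|_X}$ carry the same homology and therefore the same Euler characteristic, transferring (i) and (ii) verbatim to $\chi(I_{M|_X})$ and to $h_i(I_{M|_X})$. (iv) Hochster's formula, which underlies \cite{JV13}, identifies the $\mathbb{N}^n$-graded Betti number $\beta_{j,X}$ of the Stanley--Reisner ring of $S_M$ with $\dim \tilde{H}_{|X|-j-1}(S_{M|_X})$; by (ii) this vanishes unless $|X|-j-1=r(X)-1$, i.e.\ $j=n(X)$, and for that $j$ it is nonzero iff $X$ is a cycle. (v) Finally, the parity-check correspondence gives that the restricted code $C_X$ is non-degenerate exactly when $M|_X$ has no coloop, a coloop being a coordinate identically zero on $C_X$.

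The single step carrying content beyond bookkeeping --- and the one I expect to be the main obstacle --- is to upgrade Theorem \ref{main1} from the identity $|\chi(S_{M|_X})|=|\mu(L_{M|_X})|$ to the strict non-vanishing $\overline{\mu}(M|_X)\neq 0$ whenever $M|_X$ is coloop-free. I would handle this by dualizing: $M|_X$ coloop-free means $(M|_X)^*$ is loopless, so its lattice of flats is geometric, and the Möbius number of a geometric lattice is nonzero --- its absolute value counts the no-broken-circuit bases, among which the lexicographically least basis always occurs. Since the Möbius number is unchanged on passing to the opposite lattice, $|\mu(L_{M|_X})|\geq 1$, whence $\overline{\mu}(M|_X)\neq 0$. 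Combining this non-vanishing with (i)--(v) and the pivot shows that all the listed conditions are equivalent to $X$ being a cycle of $M$.
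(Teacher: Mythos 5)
Your proof is correct and follows essentially the same route as the paper's: both funnel every item through coloop-freeness of $M|_X$ and then invoke Theorem \ref{main1}/Remark \ref{rem:3.5}, shellability of the matroid complex, the weak homotopy equivalences of Proposition \ref{equival}, and Hochster's formula, exactly as the paper does via its citations to \cite{B} and \cite{JV13}. The one place you add substance is the explicit no-broken-circuit argument for $\overline{\mu}(M|_X)\neq 0$ when $M|_X$ is coloop-free, a standard fact about geometric lattices that the paper leaves implicit in its references; this is a welcome detail, not a different approach.
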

\begin{proof}
That $\overline{\mu}(M|_{X})=(-1)^{n(X)}\mu(X)$ follows from \cite[Proposition 3.10.1]{Stanley}. The fact that this is non-zero if and only if $M$ has a co-loop (or an isthmus) follows from the definition in Remark \ref{rem:3.5}. The equivalence with the  $\chi(S_{M|_{X}}) \ne 0$ and
$\chi(I_{M|_{X}}) \ne 0$ follows for example from \cite[Formula (7.2)]{B} and Proposition \ref{equival2}. 

For the definition of Stanley-Reisner rings of simplicial complexes and their relations to codes and matroids, and the statements of the proposition, we refer to \cite{JV13}. A key ingredient is Hochster's formula (\cite{H}), which identifies $h_{r(X)-1}(S_{M|_{X}})$ with the relevant Betti number. See the proof of \cite[Theorem 1]{JV13}.
\end{proof}
The knowledge of only the zero-/non-zero-ness of any of these numbers can then be used to determine whether $X$ is a cycle or not, i.e. whether $X$ is the support 
of a subcode, and from the analogue of Lemma \ref{support} and its proof also to determine the generalized Hamming weights $d_r$ of $C$. We also have: 
\begin{rem}
{\rm The first point in the previous list, 
$\overline{\mu}(M|_{X}) \ne 0$, gives meaning, also
when one   \underline{defines}  $\overline{\mu}(M)$ as 
$|-\lambda_1+\lambda_2+\cdots+(-1)^s\lambda_s|$ in the sense of the proof of Theorem \ref{main1}.
With this or the usual definition we have in fact 
\begin{equation} \label{quantify}
\overline{\mu}(M|_{X})=
|\chi(S_{M|_{X}})=|\chi(I_{M|_{X}})|=h_i(I_{S_{M|_{X}}})= h_i(I_{M_{X}})=\beta_{j,X}
\end{equation}
for the relevant $i,j$. By $h_i$ we here mean the homology rank over $\mathbb{Z}$ or over any field.}
\end{rem}
From the results in \cite{JRV} it is clear that the 
Hamming weight distribution, and also all the higher weight spectra of $C$ can be found from the knowledge of all the Betti numbers $\beta_{j,X}$ and corresponding Betti numbers of elongation matroids of $M$ (the cycles of all these elongation matroids are also cycles of $M$, but with lower nullity).
Hence the "quantitive" formula (\ref{quantify}), and not only the "qualitative" (concerning "zeroness" or not) Proposition \ref{qualitative} is in principle useful to determine more than just the generalized Hamming weights.

A main motivation for this article is to investigate how many of the statements above that can be "transferred" to Gabidulin rank-metric codes and the duals $\M$ of their associated codes $\M_C$. It is clear already from the outset that there is no analogous $S_{\M}$ since the independent sets 
of a $q$-matroid do not form a simplicial complex.
But the analoguous $I_{\M}$ does indeed form such a
complex and statements of the kind
${\mu}(\M|_{X})= |\chi(I_{\M_{X}})| =h_i(I_{\M_{X}})$ for the relevant $i$,  for subspaces $X \subset E$ were a priori possible. The same applies to statements about $X$ not being a $q$-cycle equivalent to zeroness of these numbers, again being
equivalent to the existence of co-loops and
isthmuses of $\M|_{X}$. For an isthmus of a $q$-matroid we follow the definition in \cite[Definition 51]{JP} and since a coloop of a $q$-matroid is a loop of the dual $q$-matroid, the equivalence between isthmuses and coloops follows from \cite[Corollary 52]{JP}.

The results in the previous sections are negative, in that they disprove these statements. 
First of all Theorem \ref{conj:1} shows that  $\overline{\mu}(\M|_{X})=|\chi(I_{\M|_{X}})|$
is in general false. It is true that
$|\chi(I_{\M_{X}})| =h_i(I_{M|_{X}})$, for one single $i$ (because of the shellability of $I_{\M|{X}}$), and it is indeed (trivially) true that $X$ is a $q$-cycle if and only if $\M|_{X}$ does not have a coloop. But having a coloop is not equivalent to having a ($1$-dimensional) subspace $x$ contained in all basises of  
$\M|_{X}$. Having a coloop is instead equivalent to 
having a subspace $x$ not contained in $B^{\perp}$ for any basis of the dual matroid of 
$\M|_{X}$. For usual matroids not being contained in the complement of any basis for the dual matroid, is equivalent to being contained all basises of the original one. For $q$-matroids this is not equivalent then. This is a basic reason that Lemma \ref{nocommonx} and
Proposition \ref{isth1} give non-zero 
$\chi(I_{\M|_{X}})$  
not only for cycles $X$, but for all $X$ such that 
$\M|_{X}$ is not uniform of type $U_{t,t}$ for some $t$. Hence the "non-zeroness" of the $\chi(I_{\M|_{X}})$, as opposed to that of the analogous numbers for Hamming codes, are useless in the process of identifying $q$-cycles of $\M$, and thereby supports of subcodes of $C$. 

As for the "quantitative" aspects (how to find the higher weight spectra, once one has identified the $q$-cycles) it is true that the $\overline{\mu}(\M|_X)$, and corresponding numbers for the elongation matroids of $\M$, do determine the higher weight spectra. This was proven in \cite{JPV21}, where these M{\"o}bius numbers were proven to be equal to corresponding numbers defined  via the dual of an associated "classical" matroid, whose M{\"o}bius numbers again were equal to relevant Betti numbers. But there is no reason why  the weight spectra are given in the same way by the 
$\chi(I_{\M|_{X}})$ (and analogous numbers for elongation matrices). Theorem \ref{conj:1}, again,
strongly indicates this.

\vspace{.5cm}
\end{document}